\documentclass[11pt]{amsart}

\usepackage{amsmath}
\usepackage[english]{babel}
\usepackage{amssymb}
\usepackage{enumitem}
\usepackage[initials]{amsrefs}
\usepackage[all]{xy}
\usepackage{bbm}
\usepackage{ulem}
\usepackage{float}

\usepackage[pdftex,colorlinks,citecolor=blue]{hyperref}


\usepackage{fancyhdr}



\usepackage{amscd}
\usepackage{amsfonts}
\usepackage{mathrsfs}
\usepackage{setspace}
\usepackage{version}
\usepackage{mathtools}

\SelectTips{cm}{}

\usepackage{graphicx}
\usepackage{tikz-cd}


\newcommand{\bbN}{{\mathbb N}}
\newcommand{\bbQ}{{\mathbb Q}}

\newcommand{\bbR}{{\mathbb R}}
\newcommand{\bbZ}{{\mathbb Z}}


\newcommand{\id}{\operatorname{id}}

\newcommand{\diag}{\operatorname{diag}}

\newcommand{\diam}{\operatorname{diam}}

\newcommand{\SL}{\operatorname{SL}}

\newcommand{\Aut}{\operatorname{Aut}}

\newcommand{\Sym}{\operatorname{Sym}}

\newcommand{\PSL}{\operatorname{PSL}}
\newcommand{\PGL}{\operatorname{PGL}}
\newcommand{\GL}{\operatorname{GL}}

\newcommand{\efface}[1]{}

\newtheorem{mthm}{Theorem}

\newtheorem{theorem}{Theorem}[section]
\newtheorem{lemma}[theorem]{Lemma}

\newtheorem{corollary}[theorem]{Corollary}
\newtheorem{cor}[theorem]{Corollary}
\newtheorem{proposition}[theorem]{Proposition}
\newtheorem{prop}[theorem]{Proposition}

\theoremstyle{definition}
\newtheorem{definition}[theorem]{Definition}

\newtheorem{example}[theorem]{Example}

\newtheorem{remark}[theorem]{Remark}

\numberwithin{equation}{section}

\addtolength{\voffset}{-0.6cm}
\addtolength{\textheight}{1.2cm}

\addtolength{\hoffset}{-0.5cm}
\addtolength{\textwidth}{1cm}

\begin{document}
\title[Measure rigidity for horospherical subgroups of  $\Aut(T)$]{Measure rigidity for horospherical subgroups of groups acting on trees}

\author{Corina Ciobotaru}
\author{Vladimir Finkelshtein}
\author{Cagri Sert}
\address{Department of Mathematics, University of Fribourg, Chemin du Mus\'{e}e 23, 1700 Fribourg, Switzerland}
\email{corina.ciobotaru@gmail.com}
\address{Mathematisches Institut, Georg-August-Universit\"{a}t G\"{o}ttingen, Bunsenstra\ss e 3-5, 37073 G\"{o}ttingen, Germany}
\email{filyok@gmail.com}
\address{Departement Mathematik, ETH Z\"{u}rich, R\"{a}mistrasse 101, Z\"{u}rich, Switzerland}
\email{cagri.sert@math.ethz.ch}

\begin{abstract}
We prove analogues of some of the classical results in homogeneous dynamics in nonlinear setting. Let $G$ be a closed subgroup of the group of automorphisms of a biregular tree and $\Gamma\leq G$ a discrete subgroup. 
For a large class of groups $G$, we give a classification of the probability measures on $G/\Gamma$ invariant under horospherical subgroups.  When $\Gamma$ is a cocompact lattice, we show the unique ergodicity of the horospherical action. Moreover, we prove Hedlund's theorem for geometrically finite quotients. Finally, we show equidistribution of large compact orbits.
\end{abstract}

\subjclass[2010]{22D40, 20E08}

\maketitle

\section{Introduction}

The study of unipotent dynamics on quotients of linear algebraic groups by discrete subgroups is closely related to numerous problems in number theory and geometry.
In the 70's, inspired by this connection, Raghunathan formulated a conjecture concerning the closures of orbits of unipotent subgroups in the context of Lie groups. Today, unipotent dynamics are well-understood for linear algebraic groups over local fields due to seminal works of Ratner, Dani, Margulis and many others. In this paper, following a geometric analogy, we study a nonlinear counterpart of this setting. 

To explain the geometric analogy, recall that a reductive algebraic group over a non-archimedean local field acts on a natural polysimplicial complex called the Bruhat--Tits building. This is an analogue of the symmetric space of a real semisimple Lie group. For a rank-one simple algebraic group (e.g. $\SL_2$), the associated Bruhat--Tits building is a biregular tree. The horospherical stabilizer of a point in the boundary of the tree is a compact extension of the corresponding unipotent subgroup.

Once this point of view is adopted, many natural questions arise. More precisely, let $T$ be a biregular tree and $\xi \in \partial T$. Let $G \leq \Aut(T)$ be a closed subgroup, $\Gamma\leq G$ a discrete subgroup, and $G^0_\xi:=\{g\in G \, |\, g \xi=\xi \; \text{and} \; g \; \text{is elliptic}\}$ the horospherical stabilizer of $\xi$ in $G$. What can be said about the dynamics of the $G^0_\xi$-action on $G/\Gamma$? What are the analogues of number theoretical applications of linear homogeneous dynamics? 

This paper addresses the former question. In the linear setting, the study of unipotent dynamics mostly goes as follows: first, one classifies invariant measures, which leads to equidistribution results, which in turn allows understanding of the orbit closures. 
In this work we answer these questions in different degrees of generality.

\subsection{Main results}

\subsubsection{Measure classification}
\label{sec::111}
We are able to obtain a measure classification result for large subgroups $G$ of $\Aut(T)$. Largeness of $G$ is reflected in two hypotheses below: we require that $G$ satisfies Tits' independence property (\S \ref{subsub.Tits.indep}) and  condition (flip). Tits' independence insures that there are sufficiently many rotations in $G$; it was introduced by Tits  to prove simplicity of large subgroups of $\Aut(T)$. Condition (flip) is a transitivity assumption on the action of $G$ on the boundary of the tree $\partial T$; it means that for every triple of distinct ends $\xi_1,\xi_2,\xi_3 \in \partial T$, there is an element $g \in G$ fixing $\xi_1$ and satisfying $g \xi_i= \xi_j$ for $i \neq j \in \{2,3\}$. It is clearly implied by $3$-transitivity of $G$-action on $\partial T$ but not vice versa (see \S \ref{subsub.examples}).

We are now ready to state our main result. 

\begin{mthm}[Measure classification]\label{thm.measure.classification}
Let $T$ be a $(d_0,d_1)$-biregular tree with $d_0,d_1 \geq 3$ and $G$ be a closed, topologically simple subgroup of $\Aut(T)$, satisfying Tits' independence property and condition (flip). Let $\xi \in \partial T$ and $G_\xi^0 \leq G$ be the horospherical stabilizer of $\xi$. Let $\Gamma$ be a discrete subgroup of $G$, denote $X=G/\Gamma$ and let $\mu$ be a $G_\xi^0$-invariant and ergodic Borel probability measure on $X$. Then, either $\mu$ is a $G^0_\xi$-homogeneous measure, or $\Gamma$ is a lattice in $G$ and $\mu=m_X$ is the unique $G$-invariant probability measure on $X$.
\end{mthm}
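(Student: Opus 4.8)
The plan is to follow the strategy pioneered by Burger--Mozes and by Dani--Margulis for unipotent flows, adapted to the tree setting. The starting point is the observation that the horospherical subgroup $G_\xi^0$ is not a single unipotent-like subgroup but a compact extension of one: one has a filtration of $G_\xi^0$ by compact open subgroups $U_n$ fixing larger and larger horoballs around $\xi$, and the contraction group $U_\xi^- = \{g \in G : a^n g a^{-n} \to e\}$ for a suitable "geodesic translation" element $a \in G$ that contracts toward $\xi$. The first step is therefore to set up this dictionary carefully: choose $a \in G$ translating along a geodesic ending at $\xi$ (such an $a$ exists because $G$ acts cocompactly on $T$ under our hypotheses, or can be produced directly from condition (flip)), and record that $G_\xi^0$ together with $a$ and the opposite horospherical subgroup $G_{\xi^-}^0$ generate $G$ — this uses Tits' independence and (flip), and is the place where topological simplicity of $G$ enters.

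The core of the argument is a polynomial-drift / shearing argument. Given the $G_\xi^0$-invariant ergodic measure $\mu$, I would first upgrade invariance: using a Mautner-type phenomenon, show that $\mu$ is invariant under the full contraction group of $a$ and, by ergodicity together with the Poincar\'e recurrence of $a$ acting on $(X,\mu)$ (the measure is finite), deduce $a$-quasi-invariance and then genuine $\{a^n\}$-ergodicity or an $a$-invariant component. The key dynamical input is then: take two generic nearby points $x, y = g x$ with $g$ slightly off $G_\xi^0$; push by $a^n$; the transverse displacement, which lies in the expanding directions $G_{\xi^-}^0$, grows in a controlled (eventually "unipotent", i.e. locally constant on large balls) manner; applying a Birkhoff/maximal-ergodic argument along the $U_n$-orbits and extracting a limit, one produces a new element of $G$, transverse to $G_\xi^0$, under which $\mu$ is invariant. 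Because $G$ is topologically simple and $G_\xi^0$ together with any transverse one-parameter-like piece generates a dense (hence, by a closedness argument, all) of $G$, one concludes $\mu$ is $G$-invariant; finiteness of $\mu$ then forces $\Gamma$ to be a lattice and $\mu = m_X$ by uniqueness of Haar.

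Two reductions frame this. If the generic $G_\xi^0$-orbit is already closed in $X$, then $\mu$ is supported on a single closed orbit and, the stabilizer being a closed subgroup containing $G_\xi^0$, one checks directly (again via simplicity of $G$ and the structure of subgroups containing $G_\xi^0$ — either $G_\xi^0$ itself, up to the normalizer, or all of $G$) that $\mu$ is $G_\xi^0$-homogeneous, giving the first alternative. If no generic orbit is closed, the drift argument above applies and yields extra invariance. The main obstacle is precisely this drift step: in the non-linear tree setting the "unipotent" part of $G_\xi^0$ acts through its abelianization-like quotient in a way that is only eventually polynomial, so controlling the direction of transverse drift requires a careful combinatorial analysis of how $a^n g a^{-n}$ moves in the building as $g$ ranges over a small transversal — in particular ruling out that all the drift is "absorbed" by the compact part of $G_\xi^0$ and produces nothing new. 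Handling this, and the measurable selection needed to extract the limiting transverse element while staying inside the support of $\mu$, is where the real work lies; the rest is soft structure theory of $G$ built from Tits' independence and (flip).
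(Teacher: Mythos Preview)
Your overall architecture is in the right spirit, but there is a genuine gap in the order of operations, and the drift step is mis-aimed.

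First, the Mautner/Poincar\'e step is circular. You propose to use Mautner to upgrade $G_\xi^0$-invariance to contraction-group invariance and then Poincar\'e recurrence of $a$ to get $a$-(quasi-)invariance. But $\mu$ is already invariant under $G_\xi^0=\overline{U^+}$ by hypothesis, so Mautner gives nothing new at this stage; and Poincar\'e recurrence for $a$ requires $a$-invariance of $\mu$, which is precisely what you are trying to establish. In the paper the logic runs the other way: the drift argument is what \emph{produces} $a$-invariance of $\mu$, and only afterwards does Mautner (applied to the $G_\xi$-representation on $L^2(X,\mu)$) yield $a$-ergodicity.

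Second, the drift is run along $G_\xi^0$, not along $a^n$, and its output is the single element $a$, not a piece of $G_{\xi_-}^0$. Since $\mu$ is not yet known to be $a$-invariant, you cannot push generic points by $a^n$. What the paper does is take two nearby generic points $y$ and $\epsilon_n y$ with $\epsilon_n\notin G_\xi^0$, average over F{\o}lner sets $F_n\subset G_\xi^0$, and use a factorization $u\epsilon_n=\Psi_{\epsilon_n}(u)\,a\,\Phi_{\epsilon_n}(u)$ with $\Psi_{\epsilon_n}(u)\to\id$ and $\Phi_{\epsilon_n}$ a bijection onto another F{\o}lner set with \emph{constant} Radon--Nikodym derivative. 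This last point --- that the time-change $\Phi_{\epsilon_n}$ has constant Jacobian and lands in a fixed finite family of F{\o}lner sequences --- is the technical heart, and it is exactly where Tits' independence and (flip) are used. Your remark about drift being ``absorbed by the compact part'' identifies the right obstacle, but the resolution is this explicit Bruhat-type factorization, not a limiting-direction argument.

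Finally, once $a$-invariance and $a$-ergodicity are in hand, the paper does not argue that $G_\xi^0$ plus a transverse element generate $G$. Instead it follows Ghys: form $G_{\xi_-}^0$-invariant functions $\tilde\theta$ by taking Birkhoff limits along $a^n$ and averaging over $M=G_\xi^0\cap G_{\xi_-}^0$; show $\tilde\theta$ is constant both $\mu$-a.e.\ and $m_X$-a.e.\ (the latter via Bruhat decomposition, which also forces $\Gamma$ to be a lattice); then equate the two constants to conclude $\mu=m_X$. Your ``generates $G$ by simplicity'' endgame would need invariance under a genuinely transverse element, which the drift here does not directly supply.
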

Recall that a probability measure $\mu$ on $X$ is said to be $G^0_\xi$-homogeneous if it is the unique $G^0_\xi$-invariant Borel probability measure on a closed $G^0_\xi$-orbit.

Groups satisfying the hypotheses of previous theorem include $\Aut(T)$, with $d_0 \neq d_1$,  $\Aut(T_d)^+$ and universal groups $U(F)^+$ in the sense of Burger--Mozes (see \S \ref{subsub.examples}), where the permutation group $F\leq \Sym(d)$ satisfies the corresponding condition (flip). Further examples are discussed in \S \ref{subsub.examples}. 

The proof of this theorem relies on an analogue of Ratner's drift/transverse divergence argument which produces additional invariance of the probability measure $\mu$: to set this argument in our case, we make use of geometric versions of some classical decompositions of linear groups. Controlling the analogue of the time change in the drift argument is the subtle part; to do this we need to show existence and uniqueness of solutions of certain equations coming from Bruhat decomposition of $G$ - this is where Tits' independence and (flip) are used. Once the drift argument is executed, the rest of the proof follows a strategy due to Ghys \cite{ghys} in his alternative proof of the measure classification for $\SL(2,\mathbb{R})$-quotients.

Let us list a few remarks on the hypotheses of Theorem \ref{thm.measure.classification}.

\begin{remark}\label{remark.intro1}
1. We note that if $G$ is topologically simple and satisfies Tits' independence property, then $G$ is necessarily nonlinear (\cite[Corollary R]{caprace-reid-willis}).\\[-9pt]

2. Geometric correspondents of unipotent subgroups of linear groups are contraction groups. 
In simple algebraic groups, all contraction groups are known to be closed, but many nonlinear examples have no closed contraction groups. The group $G_\xi^0$ is the closure of the corresponding contraction group in $G$, whenever $G$ satisfies Tits' independence property (see Lemma \ref{lem::closure_contraction_gr}) and, hence, is the natural analogue of the unipotent flow in our setting. We remark that, unlike in the real case, any finite collection of elements in  $G_\xi^0$ generates a precompact group, which does not lead to interesting dynamics from the perspective of this article. \\[-9pt] 
    
3. The topological simplicity assumption is crucial for us, since it ensures the Howe--Moore property, which is one of the central ingredients of our proof. Moreover, topological simplicity, together with $2$-transitivity of $G$ on $\partial T$, guarantees that there are no proper closed unimodular subgroups strictly containing $G_\xi^0$, allowing us to prove that the only possible $G_\xi^0$-invariant probability measures other than $m_X$ are $G_\xi^0$-homogeneous. 

\end{remark}

\subsubsection{Unique ergodicity}
${}$

When the lattice is cocompact, the measure classification can often be proved using different techniques. Indeed, our result for compact quotient covers a larger family of groups $G \leq \Aut(T)$ and parallels Furstenberg's theorem \cite{furstenberg} on unique ergodicity of horocycle flows.

\begin{mthm}[Unique ergodicity of compact quotients]\label{thm.unique.ergo}
Let $T$ be a $(d_0,d_1)$-biregular tree with $d_0,d_1 \geq 3$. Let $G$ be a non-compact, closed, topologically simple subgroup of $\Aut(T)$ acting transitively on $\partial T$ and satisfying Tits' independence property. Let $\xi \in \partial T$ and $G^0_\xi \leq G$ be the horospherical stabilizer of $\xi$. Let $\Gamma$ be a uniform lattice in $G$ and  $X=G/\Gamma$. Then, the $G^0_\xi$-action on $X$ is uniquely ergodic.
\end{mthm}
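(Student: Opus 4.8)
The plan is to set up a tree analogue of Furstenberg's proof of the unique ergodicity of the horocycle flow. Since $\Gamma$ is a lattice, $X$ carries the $G$-invariant probability measure $m_X$, and as $m_X$ is $G^0_\xi$-invariant, unique ergodicity is equivalent to the assertion that every $G^0_\xi$-invariant Borel probability measure on $X$ equals $m_X$. Fix a hyperbolic $a\in G$ with attracting endpoint $\xi$ and translation length $\ell$ (such an element exists since $G$ is non-compact and acts transitively on $\partial T$), and take the reference ray $(x_0,x_1,\dots)\to\xi$ along the axis of $a$. Put $P_n:=\Fix([x_n,\xi))$; then each $P_n$ is a compact (profinite) subgroup of $G^0_\xi$, one has $P_0\subseteq P_1\subseteq\cdots$ with $\bigcup_nP_n=G^0_\xi$ (so $G^0_\xi$ is amenable and $(P_n)$ is a F\o lner sequence), and $aP_na^{-1}=P_{n+\ell}$. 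Writing $\phi_n(x):=\frac{1}{m(P_n)}\int_{P_n}f(gx)\,dg$ and using that a $G^0_\xi$-invariant measure $\mu$ is $P_n$-invariant, so that $\int f\,d\mu=\int\phi_n\,d\mu$, the statement reduces — after approximating continuous functions by Lipschitz ones — to showing that for every Lipschitz $f\colon X\to\bbR$ one has $\phi_n\to\int_X f\,dm_X$ uniformly on $X$ along a subsequence $n\to\infty$.

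Two facts drive the argument. (i) Howe--Moore: $G$ is topologically simple, hence has the Howe--Moore property, and $G^0_\xi$ is a closed non-compact subgroup, so $G^0_\xi$ acts ergodically on $(X,m_X)$; thus, letting $\mathcal A_n$ be the $\sigma$-algebra of $P_n$-invariant Borel sets, we get $\phi_n=\bbE_{m_X}[f\mid\mathcal A_n]$, the $\mathcal A_n$ decrease to the $m_X$-trivial $\sigma$-algebra of $G^0_\xi$-invariant sets, and the reverse martingale convergence theorem gives $\phi_n\to\int f\,dm_X$ in $L^2(X,m_X)$. (ii) Renormalization: since $P_{k\ell}=a^kP_0a^{-k}$, the substitution $g=a^kha^{-k}$ gives $\phi_{k\ell}(a^ky)=\frac{1}{m(P_0)}\int_{P_0}(f\circ a^k)(hy)\,dh=:\psi_k(y)$; as $m_X$ is $a$-invariant, $\psi_k\to\int f\,dm_X$ in $L^2(X,m_X)$ as well. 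Moreover $\psi_k$ is genuinely left $P_0$-invariant for every $k$: if $g\in P_0$ then $a^kga^{-k}\in P_{k\ell}$, and $\phi_{k\ell}$ being left $P_{k\ell}$-invariant, $\psi_k(gy)=\phi_{k\ell}(a^kga^{-k}\cdot a^ky)=\phi_{k\ell}(a^ky)=\psi_k(y)$.

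It remains to prove that $\{\psi_k\}_k$ is uniformly equicontinuous on $X$, i.e.\ $\sup_{k,y}\lvert\psi_k(gy)-\psi_k(y)\rvert\to0$ as $g\to e$ in $G$. Granting this, Arzel\`a--Ascoli makes $\{\psi_k\}$ relatively compact in $C(X)$; every $C(X)$-limit point equals $\int f\,dm_X$ $m_X$-a.e.\ by (i)--(ii), hence everywhere (as $m_X$ has full support), so $\psi_k\to\int f\,dm_X$ uniformly, and therefore $\phi_{k\ell}=\psi_k\circ a^{-k}$ converges uniformly to the same constant — which is what we need. For the equicontinuity one writes a small $g$ as $g=g_-\,\nu\,g_+$ with $g_-\in G^0_\eta$, $\nu\in M:=\Fix(L)$ (where $L$ is the axis of $a$), and $g_+\in G^0_\xi$, all small, via a local geometric Bruhat/Langlands decomposition of the kind already used in the proof of Theorem~\ref{thm.measure.classification}. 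The $G^0_\xi$-factor $g_+$ is absorbed by the $P_0$-averaging that defines $\psi_k$ (translating the integration domain $P_0$ by a small element of $G^0_\xi$ changes the average by $O\!\left(m(P_0g_+\triangle P_0)\right)$, which tends to $0$ with $g_+$ by continuity of right translation on $L^1$ of the compact group $\Fix(x_0)\supseteq P_0$). The factors $g_-$ and $\nu$ are controlled using that conjugation by $a^k$ pushes $G^0_\eta$ toward $M\subseteq P_{k\ell}$ and acts boundedly on the compact group $M$, together with the $P_{k\ell}$-invariance of $\phi_{k\ell}$; the one expanding direction, $G^0_\xi$, is exactly the one eliminated by the $P_0$-average — the tree incarnation of the mechanism in Furstenberg's proof.

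The principal obstacle is making this last step precise and, above all, uniform in $k$. The delicacy is that $P_0$ is not open in $G^0_\xi$ and the conjugates $a^kP_0a^{-k}=P_{k\ell}$ grow without bound, so there is no naive Lipschitz control on the maps $z\mapsto a^kz$ of $X$; it is only after averaging over $P_0$ that the surviving directions $G^0_\eta$ and $M$ become $k$-uniformly controlled. Note that the argument uses no quantitative mixing, only the qualitative Howe--Moore property, which is what allows the conclusion to hold for every topologically simple $G$ with Tits' independence and transitive boundary action, and for every uniform lattice $\Gamma$, without invoking condition (flip).
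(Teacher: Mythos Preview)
Your approach is genuinely different from the paper's: you attempt a Furstenberg-style renormalization (show the averages $\psi_k$ are equicontinuous and use $L^2$-convergence from the reverse martingale theorem), whereas the paper runs Margulis' orbit-thickening argument, using the \emph{mixing} of $a$ on $(X,m_X)$ directly rather than just ergodicity of $G^0_\xi$.

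There is, however, a real gap in your equicontinuity step. Write $g=g_-g_+$ with $g_-\in G^0_{\xi_-}$, $g_+\in G^0_\xi$ small (a two-factor decomposition suffices; your three-factor version is redundant since $M\subset G^0_\xi\cap G^0_{\xi_-}$, and incidentally $P_0$ \emph{is} open in $G^0_\xi$, since $P_0=G_{x_0}\cap G^0_\xi$). The factor $g_+$ can indeed be absorbed by left $P_0$-invariance of $\psi_k$, leaving you to control $\psi_k(g_-w)-\psi_k(w)=\phi_{k\ell}(g'_-\,z)-\phi_{k\ell}(z)$ with $g'_-=a^kg_-a^{-k}$ and $z=a^kw$. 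Writing $g'_-=m_k\epsilon_k$ with $m_k\in M$ and $\epsilon_k\to e$ (as in Lemma~\ref{lem::some_little_facts}(3)) and using $M\subset P_{k\ell}$, you reduce to bounding $|\phi_{k\ell}(\epsilon_k z)-\phi_{k\ell}(z)|$ uniformly in $k$ and $z$. But
\[
\phi_{k\ell}(\epsilon_k z)-\phi_{k\ell}(z)=\frac{1}{m(P_{k\ell})}\int_{P_{k\ell}}\bigl[f(h\epsilon_k z)-f(hz)\bigr]\,dh
=\frac{1}{m(P_{k\ell})}\int_{P_{k\ell}}\bigl[f((h\epsilon_k h^{-1})\,hz)-f(hz)\bigr]\,dh,
\]
and for generic $h\in P_{k\ell}$ the conjugate $h\epsilon_k h^{-1}$ does \emph{not} lie in any fixed neighborhood of $e$ in $G$: if $g_-\in G_{[x_{2N},\xi_-)}$ then $\epsilon_k\in G_{B(x_0,2N+k\ell-1)}$, but $h\in P_{k\ell}$ moves $x_0$ by up to $2k\ell$, so $h\epsilon_k h^{-1}$ is only guaranteed to fix $B(x_0,2N-k\ell-1)$, which is vacuous for large $k$. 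Thus uniform continuity of $f$ gives no $k$-uniform bound, and no change of variables in $P_{k\ell}$ repairs this since $\epsilon_k\notin G^0_\xi$. This is precisely the obstruction the paper circumvents by thickening the $P_{k\ell}$-orbit in the $G^0_{\xi_-}$-direction \emph{before} applying $a^\ell$, and then invoking mixing rather than trying to prove the $\phi_{k\ell}$ are equicontinuous.
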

An immediate topological consequence is the following:

\begin{corollary}\label{corol.intro.minimal}
The action of $G^0_\xi$ on the compact set $X$ is minimal, i.e. every $G^0_\xi$-orbit is dense.
\end{corollary}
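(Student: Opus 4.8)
The plan is to derive minimality formally from Theorem~\ref{thm.unique.ergo}, via the elementary principle that a uniquely ergodic action on a compact space is minimal provided its unique invariant measure has full support.

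First I would record two auxiliary facts about $G_\xi^0$. It is amenable: by Remark~\ref{remark.intro1}(2) every compactly generated subgroup of $G_\xi^0$ is precompact, so $G_\xi^0$ is the directed union of its compact subgroups and hence amenable; it is also $\sigma$-compact, being a closed subgroup of the second countable locally compact group $\Aut(T)$ (here $T$ is locally finite). In particular, any nonempty compact $G_\xi^0$-invariant subset of a $G_\xi^0$-space carries a $G_\xi^0$-invariant Borel probability measure, by the usual fixed-point argument applied to the weak-$*$ compact convex set of probability measures on that subset.

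Now fix an arbitrary $x \in X$ and consider the orbit closure $Y := \overline{G_\xi^0 x}$. Since $X$ is compact, $Y$ is a nonempty compact $G_\xi^0$-invariant subset, so by the previous paragraph it supports a $G_\xi^0$-invariant probability measure $\nu$; regarded as a measure on $X$, it is $G_\xi^0$-invariant and satisfies $\supp(\nu) \subseteq Y$. By the unique ergodicity statement of Theorem~\ref{thm.unique.ergo}, $\nu$ is the unique $G_\xi^0$-invariant probability measure on $X$, namely $\nu = m_X$, the $G$-invariant probability measure on $X = G/\Gamma$. It then remains to observe that $m_X$ has full support: $\supp(m_X)$ is a nonempty closed $G$-invariant subset of $X$, and since $G$ acts transitively on $X = G/\Gamma$ this forces $\supp(m_X) = X$. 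Hence $X = \supp(m_X) = \supp(\nu) \subseteq Y = \overline{G_\xi^0 x}$, so $\overline{G_\xi^0 x} = X$. As $x$ was arbitrary, every $G_\xi^0$-orbit is dense.

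There is no genuine obstacle here: the corollary is a standard consequence of unique ergodicity, and beyond Theorem~\ref{thm.unique.ergo} the only inputs are the amenability of $G_\xi^0$ (needed to produce an invariant measure on each orbit closure) and the full support of $m_X$, both of which are elementary.
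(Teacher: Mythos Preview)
Your proof is correct and is precisely the standard argument the paper has in mind when it calls the corollary an ``immediate topological consequence'' of Theorem~\ref{thm.unique.ergo}; the paper gives no separate proof. One cosmetic remark: your route to amenability of $G_\xi^0$ via Remark~\ref{remark.intro1}(2) is slightly indirect, since the paper already records (in \S\ref{subsub.haar}) that $G_\xi^0$ is the increasing union of the compact open subgroups $G_{[x_k,\xi)}$, which immediately gives amenability and the existence of an invariant probability measure on any compact invariant set.
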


The main ingredient in the proof of Theorem \ref{thm.unique.ergo} is the Howe--Moore property: it allows us to employ an orbit thickening argument, due to Margulis \cite{margulis.thesis}. Similar ideas are used in the proof of unique ergodicity in a general setting by Bowen--Marcus \cite{bowen} and Ellis--Perrizo \cite{ellis-perrizo} (see also \cite{mohammadi}). A common assumption in these results is that conjugation by a certain hyperbolic element contracts compact sets in unipotent groups to identity. However, this fails for the group $G_\xi^0$ in our setting. This issue is dealt with by using Tits' independence property and the structure of uniform lattices.

\begin{remark}
A closed group $G \leq \Aut(T)$ is non-compact and transitive on $\partial T$ if and only if it is $2$-transitive on $\partial T$  (\cite[Lemma 3.1.1]{BM00b}). Hence, Theorem \ref{thm.unique.ergo} assumes $2$-transitivity of $G$-action on the boundary, a weaker hypothesis than condition (flip) used in Theorem \ref{thm.measure.classification}. It is easy to see that (flip) implies $2$-transitivity, but not vice versa.\\[-9pt]
\end{remark}

\subsubsection{Geometrically finite quotients}\label{subsub.intro.geo.fin}
The family of lattices in $\Aut(T)$ is much less tractable than their linear counterparts. 
For tree lattices, the ``All Quotients Theorem" (\cite[Theorem 4.17]{BL}) holds: a consequence in a regular tree $T_d$ is that ``every combinatorially allowable" geometric cusp structure occurs for some $\Gamma \setminus T_d$ and the fundamental group of the graph $\Gamma \setminus T_d$ can be infinite-countably generated, contrary to the lattices in simple linear algebraic groups, for which the corresponding fundamental group is always finitely generated.
 Moreover, by work of Bekka--Lubotzky \cite{bekka-lubotzky} answering by the negative a question of Margulis, there is a lattice $\Gamma$ in $G=\Aut(T_d)$ such that the regular representation of $G$ on $L^2(G/\Gamma)$ does not have a spectral gap, in contrast to the linear case.

In the next theorem, we restrict our attention to the family of geometrically finite lattices. These are lattices $\Gamma\leq G$ such that the quotient graph $\Gamma \backslash T$ is, in particular, a union of a finite graph and finitely many rays (for a precise definition, see \S \ref{subsec.geo.fin}). By work of Lubotzky \cite{lubotzky.gafa}, this class of groups contains all the algebraic examples: for any non-archimedean local field $\mathrm{k}$ and semisimple linear algebraic $\mathrm{k}$-group $\mathbb{G}$ of $\mathrm{k}$-rank one, any lattice $\Gamma$ in $\mathbb{G}(\mathrm{k})$ is a geometrically finite lattice of $\Aut(T)$ where $T$ is the Bruhat--Tits tree associated to $\mathbb{G}(\mathrm{k})$.

For this family, we give a complete description of the $G^0_\xi$-orbit closures as in Hedlund's theorem \cite{hedlund} for $\SL(2,\mathbb{R})$-quotients.

\begin{mthm}[Geometrically finite quotients]\label{thm.geo.fin}
Let $T$ be a $(d_0,d_1)$-biregular tree with $d_0,d_1 \geq 3$, $G$ be a non-compact, closed and topologically simple subgroup of $\Aut(T)$ acting transitively on $\partial T$, $\Gamma$ a geometrically finite lattice in $G$ and $\xi \in \partial T$.  \\[3pt]
1. A $G^0_\xi$-orbit in $X$ is either compact or dense.\\[1pt]
2. Let $a \in G$ be a hyperbolic element of  translation length $2$ with fixed point $\xi \in \partial T$ and suppose that $\Gamma$ has $k$ cusps. Then, there exist $x_1,\ldots,x_k \in X$ such that the family of compact $G^0_\xi$-orbits consists precisely of the orbits of $a^i x_j$ for $i \in \mathbb{Z}$ and $j=1,\ldots,k$.
\end{mthm}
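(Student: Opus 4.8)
The plan is to classify the compact orbits first (which already yields part~2), and then to deduce part~1 from it together with a non‑divergence statement and the Howe--Moore property. \emph{Setup.} Fix a hyperbolic $a\in G$ of translation length $2$ with $a\xi=\xi$ (such $a$ exists since $G$ is non‑compact and transitive, hence $2$‑transitive, on $\partial T$). A conjugate of an elliptic element fixing $\xi$ is again elliptic and fixes $\xi$, so $a$ normalizes $G^0_\xi$. As $G$ is topologically simple it is type‑preserving, so the Busemann character $\beta_\xi\colon G_\xi\to 2\bbZ$ of the full stabilizer $G_\xi$ of $\xi$ is surjective with $\beta_\xi(a)=\pm 2$; since an automorphism of a tree fixing an end with vanishing Busemann translation number is elliptic, $\ker\beta_\xi=G^0_\xi$ and therefore $G_\xi=G^0_\xi\rtimes\langle a\rangle$. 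Moreover $G^0_\xi$, in which every finitely generated subgroup is precompact, is an increasing union of compact open subgroups, hence unimodular. Call $\eta\in\partial T$ \emph{$\Gamma$‑cuspidal} if $\Gamma\cap G^0_\eta$ is cocompact in $G^0_\eta$; by the structure theory of geometrically finite lattices the $\Gamma$‑cuspidal ends form exactly $k$ orbits under $\Gamma$, one per cusp, and no element of $\Gamma$ fixing a $\Gamma$‑cuspidal end is hyperbolic.

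\emph{Part 2.} For $x=g\Gamma$ one has $g^{-1}G^0_\xi g=G^0_{g^{-1}\xi}$ and $\Stab_{G^0_\xi}(x)=g\,(\Gamma\cap G^0_{g^{-1}\xi})\,g^{-1}$, so by unimodularity of $G^0_\xi$ the orbit $G^0_\xi x$ is compact if and only if $g^{-1}\xi$ is $\Gamma$‑cuspidal. For each cusp $j$ pick a $\Gamma$‑cuspidal end $\eta_j$ and $g_j\in G$ with $g_j^{-1}\xi=\eta_j$ (possible since $G$ is transitive on $\partial T$), and put $x_j=g_j\Gamma$. Given a compact orbit $G^0_\xi g\Gamma$, choose $\gamma\in\Gamma$ and $j$ with $\gamma\cdot(g^{-1}\xi)=\eta_j$; then $g_j\gamma g^{-1}=:c\in G_\xi$, so $g\Gamma=c^{-1}g_j\Gamma=c^{-1}x_j$, and writing $c^{-1}=ha^i$ with $h\in G^0_\xi$ and using $aG^0_\xi a^{-1}=G^0_\xi$ gives $G^0_\xi g\Gamma=G^0_\xi a^i x_j=a^i\,G^0_\xi x_j$, which is compact. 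The index $j$ is an invariant of the orbit, because $G^0_\xi$ fixes $\xi$ and so $G^0_\xi g\Gamma$ determines the $\Gamma$‑orbit of $g^{-1}\xi$; and if $G^0_\xi a^i x_j=G^0_\xi a^{i'}x_j$, unwinding the definitions produces an element $\gamma\in\Gamma$ fixing $\eta_j$ with $\beta_{\eta_j}(\gamma)=2(i-i')$, whence $i=i'$ as $\gamma$ cannot be hyperbolic. This establishes part~2.

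\emph{Part 1.} Let $x=g\Gamma$ with $G^0_\xi x$ non‑compact, so $\eta:=g^{-1}\xi$ is not $\Gamma$‑cuspidal; we must show $\overline{G^0_\xi x}=X$. The bi‑infinite $a$‑orbit of $x$ shadows the image in $\Gamma\backslash T$ of a geodesic line one of whose ends is $\eta$, and since $\Gamma\backslash T$ is a finite graph with finitely many rays attached, this line either returns to the finite core infinitely often or eventually runs up a cusp ray to its end — the latter forcing $\eta$ to be $\Gamma$‑cuspidal; hence the former holds and, more quantitatively, $G^0_\xi x$ does not escape to the cusps. Averaging the normalized Haar measures of an exhausting sequence of compact open subgroups of $G^0_\xi$ pushed onto $G^0_\xi x$, one obtains along a subsequence (non‑divergence giving tightness) a $G^0_\xi$‑invariant Borel probability measure $\mu$ with $\supp\mu\subseteq\overline{G^0_\xi x}$. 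The classification of $G^0_\xi$‑invariant ergodic probability measures on $X$ in the geometrically finite case — proved by the orbit‑thickening argument of Margulis and Bowen--Marcus, using the mixing of the $a$‑action on $(X,m_X)$ provided by Howe--Moore, and using that $G^0_\xi$ is an increasing union of compact subgroups in place of a contracting conjugation — says each such measure is $m_X$ or the Haar measure of a compact $G^0_\xi$‑orbit. If $\overline{G^0_\xi x}\ne X$, then an ergodic component of $\mu$ is supported on a compact orbit $O\subseteq\overline{G^0_\xi x}$; but by the structure of the cusp region of $\Gamma\backslash T$, a $G^0_\xi$‑orbit accumulating on a compact orbit at a cusp is itself compact, so $G^0_\xi x$ would be compact — a contradiction. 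Hence $\overline{G^0_\xi x}=X$, since $\supp m_X=X$.

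The main obstacle is concentrated in the non‑divergence input and the measure classification it feeds: one needs a genuine Dani--Margulis type estimate bounding the part of $G^0_\xi x$ that penetrates the cusps — concretely, controlling how large compact subgroups of $G^0_\xi$ meet the horoball over each cusp — and one must carry out the Margulis/Bowen--Marcus thickening argument in the non‑uniform, geometrically finite setting, circumventing the failure of ``conjugation by a hyperbolic element contracts $G^0_\xi$'' without Tits' independence available. I expect the non‑divergence estimate to be the technical heart.
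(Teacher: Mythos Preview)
Your Part~2 is essentially the paper's argument: compact $G^0_\xi$-orbits correspond exactly to cosets $g\Gamma$ with $g^{-1}\xi$ a $\Gamma$-bounded parabolic end, and the enumeration by $a^i x_j$ then follows from $G_\xi = G^0_\xi \rtimes \langle a\rangle$ together with the fact that $\Gamma\cap G_\eta$ contains no hyperbolics for such $\eta$.

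For Part~1 your route is genuinely different from the paper's and contains a gap. You want to (i) prove non-divergence of $G^0_\xi$-averages on a non-compact orbit, (ii) extract a $G^0_\xi$-invariant probability measure supported on $\overline{G^0_\xi x}$, and (iii) invoke a measure classification. Step~(iii) is the problem: the only measure classification available in the paper (Theorem~\ref{thm.measure.classification}) requires Tits' independence and condition (flip), neither of which is assumed here. Your assertion that the classification in the geometrically finite case follows ``by the orbit-thickening argument of Margulis and Bowen--Marcus'' is not justified: that argument, as in Theorem~\ref{thm.unique.ergo}, yields unique ergodicity on \emph{compact} quotients; upgrading it to an ergodic classification on a non-compact quotient without Tits' independence is far from automatic and is not proved anywhere in the paper. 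Your step~(i) is also a genuine additional input (the paper in fact defers Dani--Margulis type recurrence to a sequel). The auxiliary claim that a $G^0_\xi$-orbit accumulating on a compact orbit must itself be compact also needs an argument.

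The paper sidesteps all of this. The key observation (Lemma~\ref{lemma.characterize.a.divergence}) is that $a^{-n}x$ diverges to infinity in $X$ \emph{if and only if} $G^0_\xi x$ is compact; this is read off Paulin's conical/bounded-parabolic dichotomy for ends via the projection $p_v\colon X\to\Gamma\backslash T$. Hence for a non-compact orbit there exist a compact $K\subset X$ and $n_k\to\infty$ with $a^{-n_k}x\in K$. Now thickening is applied \emph{directly} (Lemma~\ref{lemma.a.rec.implies.density}), not through any measure classification: using mixing of $a$ on $(X,m_X)$ from Howe--Moore and the product structure of Haar measure on $G^0_{\xi_-}G^0_\xi$, one shows that for $M$-invariant $\theta\in C_c(X)$ and a suitable compact open $O^+\subset G^0_\xi$,
\[
\frac{1}{m_{G^0_\xi}(a^{n_k}O^+a^{-n_k})}\int_{a^{n_k}O^+a^{-n_k}}\theta(ux)\,dm_{G^0_\xi}(u)\ \longrightarrow\ \int_X\theta\,dm_X .
\]
Taking $\theta=\mathbbm{1}_{MO}$ for an arbitrary compact open $O\subset X$ produces $u\in G^0_\xi$ with $ux\in MO\subseteq G^0_\xi O$, proving density. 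No quantitative non-divergence for horospherical averages and no measure classification are required.
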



In the proof of this result, the characterization of compact orbits requires understanding of the geometry of $\Gamma$-action on $T$. This was studied in detail by Paulin \cite{paulin.geo.fin} and we shall make extensive use of Paulin's result for this part of the proof. To prove the density of non-compact orbits, we employ a variant of Margulis' orbit-thickening argument using, in particular, the Howe--Moore property of $G$ proved by Burger--Mozes \cite{BM00a} (see also \cite{lubotzky-mozes}). To apply this argument, we make further use of Paulin's work \cite{paulin.geo.fin} to understand the geometry of the action of $a$, which is reminiscent of the geodesic flow.


\begin{remark}
In Theorem \ref{thm.geo.fin}, we do not assume that $G$ has Tits' independence property; so the theorem applies to any group $G=\mathbb{G}(\mathrm{k})$ of $\mathrm{k}$-points of a semisimple linear algebraic $\mathrm{k}$-group $\mathbb{G}$ of $\mathrm{k}$-rank one. A slight difference of our setup from the classical setting of unipotent dynamics over local fields is that the group $G^0_\xi$ that we consider, corresponds to a compact extension of the horospherical subgroups of such linear groups. This can be seen, for example, by the Levi decomposition (\S \ref{subsub.decompositions}).
\end{remark}

\begin{remark}The study of the discrete geodesic flow on geometrically finite quotients has proven to be very fruitful. For example, for $\Gamma=\PGL_2(F_q[t])$ the fundamental domain $\Gamma \backslash T$ is a ray (see \cites{bass-serre, BL}). This 
allows one to translate problems on continued fraction expansion and Diophantine approximation in  $F_q((t^{-1}))$ to questions about dynamics in this quotient, sometimes called the modular ray due to its similarities with the modular surface (see \cites{paulin, broise-paulin.1, hersonsky-paulin,BPP.cras,BPP.book,KLNN,paulin-shapira,paulin.geo.fin}).
\end{remark}

\subsubsection{Equidistribution of compact orbits}
It follows from an observation of Mostow \cite{mostow} (see Proposition \ref{prop.mostow}) that given a compact orbit $G^0_\xi x$ in $X$, the union of its translates $a^i G^0_\xi x$ for $i \in \mathbb{Z}$, by an appropriate hyperbolic element $a$, is dense in $X$. In our final result, we note a finer property of these translates, namely we show they equidistribute with respect to $m_X$ as their volume tends to infinity. 

Fixing a Haar measure on $G^0_\xi$, we endow its orbits with the corresponding orbital measures. Let $a$ be a hyperbolic element with attracting fixed point $\xi \in \partial T$. The element $a$ normalizes the group $G^0_\xi$ and its conjugation action expands the Haar measure of $G^0_\xi$. In particular, the volume of translates by $a$ of compact $G^0_\xi$-orbits grows to infinity.

\begin{proposition}[Equidistribution of translates of compact orbits]\label{prop.equidist.compact.orbits}
Let $G$ be as in Theorem \ref{thm.unique.ergo} and $\Gamma \leq G$ a lattice. Let $a \in G$ be as above and $x \in X$ be such that its $G^0_\xi$-orbit is compact. Then, the sequence of compact orbits $G^0_\xi  a^i x$ equidistributes to $m_X$ as $i \to +\infty$.
\end{proposition}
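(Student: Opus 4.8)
The plan is to deduce equidistribution from the unique ergodicity supplied by Theorem~\ref{thm.unique.ergo} together with the standard ``thickening'' of a compact orbit by a transversal, made available here by the Levi/Bruhat-type decompositions of $G$. Write $P = P_\xi$ for the stabilizer of $\xi$ in $G$ and recall that $a$ normalizes $G^0_\xi$ with $a G^0_\xi a^{-1} \subsetneq G^0_\xi$, the conjugation action expanding a fixed Haar measure $\nu$ on $G^0_\xi$ by a constant factor $q>1$ (the relevant edge-degree along the axis of $a$). Fix $x\in X$ with $G^0_\xi x$ compact, and let $m_i$ denote the normalized orbital measure on $G^0_\xi a^i x$. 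Since $a^i (G^0_\xi x) = (a^i G^0_\xi a^{-i}) a^i x$ and $a^i G^0_\xi a^{-i}\subseteq G^0_\xi$ has $\nu$-covolume $q^{-i}$ inside $G^0_\xi$, the measure $m_i$ is, up to normalization, the restriction of $m_0$ pushed forward by $a^i$; equivalently $m_i = (a^i)_* m_0$ after renormalizing. So the statement reduces to: for every $f\in C(X)$,
\begin{equation*}
\int_X f \, d\big((a^i)_* m_0\big) \;=\; \int_X (f\circ a^i)\, d m_0 \;\xrightarrow[i\to\infty]{}\; \int_X f\, dm_X .
\end{equation*}

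Next I would realize $m_0$ as an average of push-forwards of a fixed ``small'' piece of a $G^0_\xi$-orbit, and thicken that piece by a transversal to produce an honest open set. Concretely, using the decomposition near identity $G = (a^{\bbZ} \cdot V^-)\cdot G^0_\xi$ (Levi decomposition: the contracting/neutral directions complementary to $G^0_\xi$), pick a small neighbourhood $W$ of the identity in the transversal $a^{\bbZ}\cdot V^-$ and a small relatively compact symmetric $B\subseteq G^0_\xi$ such that $b\mapsto bx$ is injective on $WB$ and $WBx$ has positive, finite $m_X$-measure, with $\partial(WBx)$ null. For $w\in W$ the orbit $G^0_\xi (wx)$ is again compact, its normalized orbital measure $m_0^{w}$ is a continuous (weak-$*$) function of $w$, and $m_0 = m_0^{e}$. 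The key point is that by unique ergodicity of the $G^0_\xi$-action (Theorem~\ref{thm.unique.ergo}), \emph{every} normalized orbital average along $G^0_\xi$ converges to $m_X$; in particular $\frac{1}{\nu(a^i G^0_\xi a^{-i}\cap B')}\int_{B'} \delta_{b a^i x}\, d\nu(b)$, for a Følner-type family $B'$ exhausting $G^0_\xi$, converges to $m_X$ uniformly in the base point over the compact orbit. Equivalently, averaging $f\circ a^i$ over $b\in B$ against $\nu$ and then over $w\in W$ is the same as integrating $f$ against a measure that, once $a^i$ is applied, is absolutely continuous with bounded density w.r.t.\ $m_X$ on the open set $a^i WBx$, whose complement has $m_X$-mass $\to 0$ because $\nu(a^{-i}Wa^i)$ stays bounded below while $\nu(a^i B a^{-i})=q^i\nu(B)\to\infty$, so $a^iWBx$ becomes equidistributed.

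The execution I have in mind is the Margulis orbit-thickening argument in the form: let $\phi_i := (a^i)_* m_0$; for any weak-$*$ limit $\phi_\infty$ of a subsequence, show $\phi_\infty$ is $G^0_\xi$-invariant (because $a^i$-translates of $G^0_\xi$-orbits are $a^iG^0_\xi a^{-i}$-orbits, and $\bigcup_i a^i G^0_\xi a^{-i}$ is dense in $G^0_\xi$, so invariance under the dense union passes to the closure by continuity of the action on measures), hence $\phi_\infty = m_X$ by unique ergodicity, and since this holds for every subsequential limit we get $\phi_i\to m_X$; finiteness of $m_X$ (i.e.\ $\Gamma$ a lattice) guarantees no escape of mass, as $X$ is then either compact or one applies a non-divergence input — but here $\Gamma$ is assumed a lattice and $m_X$ is a probability measure, and the translates have uniformly bounded total mass $1$, so tightness must be argued: for cocompact $\Gamma$ it is automatic, and in general it follows because $m_0$ is supported on the fixed compact orbit and $a^i$ moves mass through a ``geodesic-flow-like'' box that, by the geometry of the axis of $a$ together with $\Gamma$ being a lattice, cannot send a definite proportion of mass to infinity. \textbf{The main obstacle} is exactly this last point together with proving $G^0_\xi$-invariance of the limit honestly: one must check that the expansion of $\nu$ under $a$ is uniform and that $\bigcup_i a^i G^0_\xi a^{-i}$ is dense in $G^0_\xi$ (so that invariance under each $a^iG^0_\xi a^{-i}$ in the limit upgrades to full $G^0_\xi$-invariance), and, in the non-uniform case, to rule out escape of mass — for which the cleanest route is to invoke Theorem~\ref{thm.unique.ergo} only in the uniform case as stated (the hypothesis of the Proposition matches that of Theorem~\ref{thm.unique.ergo}) so that $X$ is compact and tightness is free, leaving only the invariance-of-the-limit step, which is the genuinely substantive part of the argument.
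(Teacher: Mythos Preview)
Your approach has a genuine gap. You propose to conclude by invoking Theorem~\ref{thm.unique.ergo}, and you explicitly resolve the tightness issue by restricting to the cocompact case, arguing that ``the hypothesis of the Proposition matches that of Theorem~\ref{thm.unique.ergo}''. It does not: the Proposition assumes only that $\Gamma$ is a lattice, whereas Theorem~\ref{thm.unique.ergo} requires $\Gamma$ to be uniform. Worse, in the uniform case the statement is essentially vacuous: by Corollary~\ref{corol.intro.minimal} every $G^0_\xi$-orbit in $X$ is dense, so a compact $G^0_\xi$-orbit would be closed and dense, hence equal to $X$, and its orbital measure would already be $m_X$. The entire content of the Proposition lies in the non-uniform case, where unique ergodicity is not available (indeed, by Theorem~\ref{thm.measure.classification} there are many $G^0_\xi$-invariant probability measures, namely the homogeneous ones on compact orbits) and where tightness of the sequence $(a^i)_*m_0$ is a genuine issue that you do not address.

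There is also a small slip: you write $aG^0_\xi a^{-1}\subsetneq G^0_\xi$, but in fact $G^0_\xi\lhd G_\xi$ by the $AN$ decomposition, so $aG^0_\xi a^{-1}=G^0_\xi$. This actually makes the $G^0_\xi$-invariance of any weak-$*$ limit immediate, but as explained above that invariance alone does not pin down the limit.

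The paper takes a different route that sidesteps both problems. Rather than passing to weak-$*$ limits and classifying them, it proves the convergence $\int \theta(a^iy)\,dm_{\mathrm{orb}}(y)\to \int\theta\,dm_X$ directly. One reduces to $M$-invariant $\theta$ and to small orbit pieces $O^+x$; then one thickens $O^+x$ by a small $O^-\subset G^0_{\xi_-}$. The point is that under the $a^i$-push the $O^-$-direction contracts towards $M$ (Lemma~\ref{lem::some_little_facts}(3)), so inserting the $O^-$-average costs only $\varepsilon$ for $\theta$; after this thickening, the product structure of Haar measure (Corollary~\ref{lemma.Haar.VAU}) and injectivity on $O^-O^+$ rewrite the expression as $\int_X \theta(a^iy)\mathbbm{1}_{O^-O^+x}(y)\,dm_X(y)$ divided by $m_X(O^-O^+x)$. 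Now one applies the Howe--Moore property of $G$: mixing of the $a$-action on $(X,m_X)$ forces this to converge to $\int\theta\,dm_X$. No compactness of $X$, no unique ergodicity, and no non-escape-of-mass argument are needed.
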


The analogue of this result for finite volume hyperbolic surfaces was originally proven by Sarnak \cite{sarnak} with a convergence rate using number theoretic methods. Later, Eskin--McMullen \cite{eskin-mcmullen} gave a short non-effective proof and we follow closely their approach. We mention that after the appearance of an earlier version of this article, we were informed that this result is also a consequence of Theorem 10.2 in the recent book of Broise--Alamichel--Parkkonen--Paulin \cite{BPP.book}.

In the setting of geometrically finite quotients, Proposition \ref{prop.equidist.compact.orbits} takes a particularly nice form and gives a complete description of topology of $G^0_\xi$-invariant and ergodic probability measures (see also \S \ref{subsec.geo.fin}).

\begin{corollary}\label{corol.equidist.geo.fin}
Let $G$ be as in Theorem \ref{thm.unique.ergo}, $\Gamma \leq G$ a geometrically finite lattice, and $\xi \in \partial T$. Then, any sequence of compact $G_\xi^0$-orbits with increasing volumes equidistributes to the Haar measure on $G/\Gamma$.
\end{corollary}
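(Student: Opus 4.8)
The plan is to combine the classification of compact $G^0_\xi$-orbits in Theorem~\ref{thm.geo.fin} with the equidistribution of translates in Proposition~\ref{prop.equidist.compact.orbits}. Fix a hyperbolic element $a\in G$ with attracting fixed point $\xi$ (of translation length $2$, as in Theorem~\ref{thm.geo.fin}) and let $x_1,\dots,x_k\in X$ be the points furnished by Theorem~\ref{thm.geo.fin}(2), so that every compact $G^0_\xi$-orbit is of the form $G^0_\xi a^i x_j$ for some $i\in\mathbb{Z}$ and $j\in\{1,\dots,k\}$ (the set $\{G^0_\xi a^i x_j\}_{i\in\mathbb Z}$ is the same for $a$ and $a^{-1}$, so there is no loss in taking $\xi$ to be the attracting fixed point of $a$). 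Since $a$ normalizes $G^0_\xi$ and, $\xi$ being its attracting fixed point, conjugation by $a$ strictly expands the Haar measure of $G^0_\xi$ — the observation recalled before Proposition~\ref{prop.equidist.compact.orbits} — the orbital volume satisfies $\vol(G^0_\xi a^i x_j)=q^i\,\vol(G^0_\xi x_j)$ for a constant $q>1$; in particular, for each fixed $j$ it is a strictly increasing function of $i$ that tends to $+\infty$ precisely when $i\to+\infty$.

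Now let $(\mathcal{O}_n)_{n\ge1}$ be a sequence of compact $G^0_\xi$-orbits with increasing volumes; since the set of attainable orbital volumes $\{q^i\vol(G^0_\xi x_j):i\in\mathbb{Z},\ 1\le j\le k\}$ is discrete away from $0$, this forces $\vol(\mathcal{O}_n)\to+\infty$. Write $\mathcal{O}_n=G^0_\xi a^{i_n}x_{j_n}$ with $i_n\in\mathbb{Z}$ and $j_n\in\{1,\dots,k\}$. Because $\{1,\dots,k\}$ is finite, $\mathbb{N}$ is a union of at most $k$ subsequences along each of which $j_n$ is constant; since a sequence that is a finite union of subsequences all converging to the same limit converges to that limit, it suffices to treat each of these subsequences separately. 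Fix one, with constant value $j_n\equiv j$; along it $\vol(\mathcal{O}_n)=q^{i_n}\vol(G^0_\xi x_j)\to+\infty$, hence $i_n\to+\infty$ by the first paragraph. By Proposition~\ref{prop.equidist.compact.orbits} applied with $x=x_j$, the orbits $G^0_\xi a^i x_j$ equidistribute to $m_X$ as $i\to+\infty$, and therefore so does the subsequence $(\mathcal{O}_n)=(G^0_\xi a^{i_n}x_j)$. This proves the corollary.

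I do not expect a genuine obstacle here: the substantive inputs are Theorem~\ref{thm.geo.fin}, which lets us present an arbitrary sequence of compact orbits as translates, along a single fixed hyperbolic element, of one of finitely many base orbits, and Proposition~\ref{prop.equidist.compact.orbits}, which handles such translate sequences (the non-escape of mass into the cusps of $X$ is already part of its statement). The only point that requires care is the monotonicity of $i\mapsto\vol(G^0_\xi a^i x_j)$, which makes ``increasing volumes'' equivalent to ``$i\to+\infty$''; this is exactly the expansion property of the conjugation action of $a$ on $G^0_\xi$ when $\xi$ is its attracting fixed point, together with the fact that geometric finiteness confines all compact orbits to the $k$ translation-families of Theorem~\ref{thm.geo.fin}.
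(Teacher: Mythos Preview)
Your argument is correct and is exactly the derivation the paper has in mind: the corollary is stated immediately after Proposition~\ref{prop.equidist.compact.orbits} without a separate proof, precisely because Theorem~\ref{thm.geo.fin}(2) reduces an arbitrary sequence of compact orbits to finitely many one-parameter families $\{G^0_\xi a^i x_j\}_{i\in\mathbb Z}$, on each of which the expansion of Haar measure under $a$-conjugation forces $i_n\to+\infty$ and Proposition~\ref{prop.equidist.compact.orbits} applies. Your write-up spells out the details (discreteness of the volume set, the pigeonhole on $j$) cleanly; nothing further is needed.
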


Finally, we mention that in a forthcoming work \cite{CFS.rec}, we prove Dani--Margulis type quantitative recurrence results for the action of the horospherical groups on $X$ and deduce equidistribution of dense orbits in Theorem \ref{thm.geo.fin}.

\subsubsection{Organization of the article} We set our notations in \S \ref{subsection.notation}. In the rest of Section \ref{section.generalities}, we collect some preliminary material regarding groups acting on trees, ergodic theory for amenable groups and differentiation of measures. In Section \ref{sec.unique.ergo}, we prove Theorem \ref{thm.unique.ergo}. Section \ref{sec.main.proof} is devoted to the proof of Theorem \ref{thm.measure.classification} assuming  key Proposition \ref{prop.variable.change.RN} which itself is proven in Section \ref{sec.proof.of.prop}. In Section \ref{sec.geo.fin}, we discuss the geometrically finite lattices and prove Theorem \ref{thm.geo.fin}. Finally, in Section \ref{sec.equidist} we prove Proposition \ref{prop.equidist.compact.orbits}.

\subsection*{Acknowledgements}
The authors are thankful to Marc Burger for asking the original question that led to the results of this paper, and are grateful to Pierre-Emmanuel Caprace, Manfred Einsiedler and Jean-Fran\c cois Quint for useful discussions and to Shahar Mozes for pointing out the reference \cite{ellis-perrizo}. V.F is supported by ERC Consolidator grant 648329 (GRANT). C.S. is supported by SNF grant 178958.

\setcounter{tocdepth}{1}

\section{General facts}\label{section.generalities}

\subsection{Basic Notation}\label{subsection.notation}
We gather here the notation that is fixed throughout the article. 
Let $T$ be a $(d_0,d_1)$-biregular tree with $d_0,d_1 \geq 3$. Denote by $\Aut(T)$ the group of automorphisms of $T$ acting without edge inversion. 
When $d_0 =d_1=d \geq 3$,  $T=T_d$ is a $d$-regular tree in which case we denote $\Aut(T)=:\Aut(T_d)^+$. 



Unless explicitly mentioned, $G$ will denote a closed, topologically simple subgroup of $\Aut(T)$ that acts $2$-transitively on the boundary $\partial T$. The subgroup $\Gamma\leq G$ is always discrete, and $X:=G/\Gamma$. 

Consider the graph metric on $T$. For $v,w$ vertices of $T$, $[v,w]$ denotes the geodesic path in the tree $T$ between the vertices $v,w$. For $\xi \in \partial T$, $[v,\xi)$ is the geodesic ray  from $v$ to $\xi$. For a vertex $v\in T$ and $n\in \bbN$, $B(v,n)$ is a metric ball of radius $n$ around $v$. 
For a directed edge $[v,w]\subset T$ we denote by $T_{[v,w]}$ the unique maximal (for the inclusion) subtree of $T$ that contains the edge $[v,w]$, but does not contain any neighbors of $v$ other than $w$. 

Let $\xi \in \partial T$. By Tits \cite{Ti70}*{Prop. 3.4} $G$ admits hyperbolic elements. By transitivity of $G$ on the boundary $\partial T$, we can fix a hyperbolic element $a \in G$ with translation length $2$ with $\xi$ as its attracting fixed point. We remark that $G$ acting on $T$ without edge inversion and $2$-transitively on $\partial T$ does not contain hyperbolic elements of translation length $1$. Denote by $\xi_- \in \partial T$ the repelling fixed point of $a$ and enumerate by $\{x_i\}_{i \in \bbZ} \subset T$ the vertices of the geodesic line $(\xi_-, \xi)$. As a convention, we choose the vertex $x_0$ so that it has valency $d_0$. The meaning of symbols $a, \xi, \xi_-, x_i$'s is fixed for the rest of the article, unless otherwise specified.

For a subgraph $D$ of $T$ denote by 
\[G_D:=\{ g\in G \; | \; g(v)=v, \text{ for all } v\in D \}\]
the pointwise stabilizer of $D$. For $\xi \in \partial T$, let $$G_{\xi}:=\{ g \in G  \;\vert \; g(\xi)= \xi \}$$  be the stabiliser of $\xi$ and
\[G_{\xi}^{0}:=\{ g \in G  \;\vert \; g(\xi)= \xi, \; g \text{ elliptic} \}\]
be the \textbf{horospherical stabiliser of $\xi$}. 
Denote  $$M:=G_{\xi_-}^0 \cap G_{\xi}^0.$$

Having fixed $a$, we define the \textbf{positive contraction group} 
\[ 
U^{+}:= \{ g \in G \; | \; \lim\limits_{n \to \infty} a^{-n}ga^{n}=\id \},
\]
and, similarly, the \textbf{negative contraction group}
\[U^{-}:=\{ g \in G \; | \; \lim\limits_{n \to \infty} a^{n}ga^{-n}=\id \}.\]

\begin{example}
\label{ex::decomp_on_SL}
Let $G=\PSL(2, \bbQ_p)$. It is well-known that $G$ acts on a $(p+1)$-regular tree (see e.g. \cite{bass-serre}) and the diagonal matrix $a=\diag(p, p^{-1})$ in $G$ is a hyperbolic element of translation length $2$. In this case 
\[  G^{0}_{\xi}=\begin{pmatrix}
\bbZ_p^* & \mathbb{Q}_p \\
0 & \bbZ_p^*
\end{pmatrix},\,  \; \; \;   
 U^+ = \begin{pmatrix}
1 &  \mathbb{Q}_p \\
0 & 1
\end{pmatrix}.
\]

We remark that in this example $G$ does not satisfy Tits' independence property (see Remark \ref{remark.intro1}). 

\end{example}

All our probability measures are assumed to be Borel. For a closed subgroup $H \leq G$, $m_H$ will always denote a left Haar measure on $H$. We will specify the normalizations when needed.

Finally, we recall the following notion from the introduction
\begin{definition}\label{equ::flip}
Let $G$ be a subgroup of $\Aut(T)$. We say that $G$ satisfies \textbf{condition (flip)} if for every triple of distinct ends $\xi_1,\xi_2,\xi_3 \in \partial T$ there is an element $g \in G^{0}_{\xi_1}$ with $g \xi_2= \xi_3$  and $g \xi_3= \xi_2$.
\end{definition}

\subsection{Generalities on groups acting on trees}
\subsubsection{Classical decompositions}\label{subsub.decompositions}
 
The following two decompositions are well-known (see e.g. \cite[Theorem 1.5.2)]{Chou}) and the third one is due to \cite{BW04}.

\begin{enumerate}

\item
\label{Bruhat_decomposition}
\textbf{Bruhat decomposition:}
$$G= G_{\xi}^{0} w G_{\xi}  \sqcup G_{\xi}= G_{\xi_-}^{0}G_{\xi}\;  \sqcup \;  w^{-1}G_{\xi},$$
where  $w \in G$ is an elliptic element with $w(\xi)=\xi_-$ and $w(\xi_-)=\xi$.

\item
\label{AN_decomposition}
\textbf{AN decomposition:}  $$G_{\xi}^{0} \lhd G_{\xi} \text{ and } G_{\xi}= a^\bbZ G_{\xi}^{0}.$$ 

\item
\label{Levi_decompositions}
\textbf{Levi decomposition:}
$$G_{\xi}^{0}= U^{+} M \; , \; G_{\xi_-}^{0}= U^{-} M .
$$

\end{enumerate}

\subsubsection{Tits' independence property}\label{subsub.Tits.indep}
 
An important hypothesis we impose on the group $G$ is the following property introduced by Tits~\cite{Ti70}. It guarantees the existence of enough rotations in the group $G \leq \Aut(T)$.

\begin{definition}
\label{def::Tits_prop}
Let $T$ be a locally finite tree and $G \leq \Aut(T)$. We say that $G$ has \textbf{Tits' independence property} if for every edge $[x,y] \subset T$, we have the equality $G_{[x,y]}=G_{T_{[x,y]}}G_{T_{[y,x]}}$. 
\end{definition}

By \cite[Lemma 10]{Amann}, when $G$ is closed, $G$ satisfies Tits' independence property if and only if for any subtree $S \subset T$, having at least one edge, we have  
$$G_S = \prod_{x\in S} G_{S_x}, \quad \text{ where for } x\in S, \; S_x : = \bigcup_{y\in S \cap B(x,1)} T_{[x,y]}.$$


\subsubsection{Examples}\label{subsub.examples}

Explicit examples of closed subgroups $G \leq \Aut(T_d)^+$ with Tits' independence property are the universal groups introduced by Burger--Mozes in \cite[Section  3]{BM00a}. We recall the definition.

\begin{definition}
\label{def::legal_color}
Let $\iota: E(T_d) \to \{1,...,d\} $, where   $E(T_d)$ is the set of unoriented edges of $T_d$. The set $E(x)\subset E(T_d)$ consisting of edges containing the vertex $x\in T_d$ is called the star of $x$. We say that $\iota$ is a legal coloring of $T_d$ if for every vertex $x\in T_d$ the map $i|_{E(x)}$ is surjective. \end{definition}

Given a legal coloring  $\iota$,  for each $g\in \Aut(T_d)$ and $x \in T_d$, we denote $g_*(x):= \iota \circ g \circ (\iota \vert_{E(x)})^{-1} \in \Sym(d)$.

Let $F \leq \Sym(d)$ and let $\iota$ be a legal coloring of $T_d$. The \textbf{universal group}, with respect to $F$ and $\iota$, is defined  as
$$ U(F):= \{g \in \Aut(T_d) \; \vert \; g_*(x) \in F \text{ for every } x \in T_d \}. $$
In fact, the group $U(F)$ is independent of the legal coloring $\iota$. We denote by $U(F)^+$ the subgroup generated by the edge-stabilizing elements of $U(F)$.  When $F$ is transitive and generated by its point stabilizers, we have  $U(F)^{+} = U(F) \cap \Aut(T_d)^{+}$. 

The groups $U(F)^+$ are closed, abstractly simple (\cite[Proposition 3.2.1]{BM00a}), and $k$-transitive on $\partial T_d$ whenever  $F$ is a $k$-transitive permutation group. Furthermore, they satisfy Tits' independence property (\cite[Proposition 48]{Amann}).
For $U(F)^{+}$ condition (flip) (see Definition \ref{equ::flip}) is clearly implied by the 3-transitivity of $F$. On the other hand,  (flip) and the 3-transitivity are not equivalent in general. Indeed, consider a finite field $\mathrm{k}$. For $n\geq 3$ the action of $\GL_n(\mathrm{k})$ on $\mathbb{P}^1(\mathrm{k}^n)$ is not 3-transitive, because 3 linearly dependent vectors cannot be mapped to 3 linearly independent vectors. On the other hand, it is easy to check that this action is 2-transitive and satisfies (flip).

One can construct many other examples of subgroups $G$ of $\Aut(T)$  satisfying the hypotheses of Theorems \ref{thm.measure.classification}, \ref{thm.unique.ergo} using the closure techniques given in \cite{BEW}.

\subsubsection{Contraction groups}

Contraction groups are useful in many situations in ergodic theory and in the study of unitary representations. One instance is the Mautner phenomenon, which we recall below. For a proof, see \cite[Chp. III, Thm. 1.4]{BeMa}.

\begin{lemma}
\label{Mautner}
Let $a \in G$ be a hyperbolic element and let
$(\pi, \mathcal{H})$ be a unitary representation of $G$ on a Hilbert space $\mathcal{H}$. Let $v \in \mathcal{H}$ be a non-zero vector and suppose $\{\pi(a^{n})v\}_{n\geq 1}$ weakly converges to $v_{0} \in \mathcal{H}$. Then $v_{0}$ is $\overline{U}^+$-invariant. \qed
\end{lemma}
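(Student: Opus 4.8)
The plan is to run the standard Mautner argument. Since $\pi$ is strongly continuous, the stabilizer $H := \{g \in G : \pi(g) v_0 = v_0\}$ is a closed subgroup of $G$; and $\overline{U}^+$ is by definition the closure of $U^+$, so it suffices to show that $\pi(u) v_0 = v_0$ for every $u \in U^+$.

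Fix $u \in U^+$ and set $u_n := a^{-n} u a^n$; by the definition of the contraction group $U^+$ one has $u_n \to \id$, and the key (trivial) identity is $u\, a^n = a^n u_n$. First I would note that strong continuity of $\pi$ gives $\pi(u_n) v \to v$ in norm, and then, since each $\pi(a^n)$ is unitary, $\| \pi(a^n)\pi(u_n) v - \pi(a^n) v \| = \| \pi(u_n) v - v \| \to 0$. Combining this with the hypothesis that $\pi(a^n) v$ converges weakly to $v_0$, one concludes that $\pi(u\, a^n) v = \pi(a^n)\pi(u_n) v$ also converges weakly to $v_0$.

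On the other hand, $\pi(u)$ is a bounded operator, hence continuous for the weak topology, so $\pi(u)\pi(a^n) v = \pi(u\, a^n) v$ converges weakly to $\pi(u) v_0$. Uniqueness of weak limits then yields $\pi(u) v_0 = v_0$, which completes the argument.

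I do not expect a genuine obstacle here. The only point that needs a little care is that one cannot deduce anything about $\pi(a^n)\pi(u_n) v$ directly from $\pi(u_n) v \to v$ without invoking the uniform (here, unitary) bound on the operators $\pi(a^n)$ — this is precisely why one rewrites $u\, a^n$ as $a^n u_n$ and lets the contraction $u_n \to \id$ do the work. The same computation with $a$ replaced by $a^{-1}$ would give the analogous statement for $\overline{U}^-$.
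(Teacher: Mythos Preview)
Your argument is correct and is precisely the standard Mautner computation. The paper itself does not prove this lemma but simply refers the reader to \cite[Chp.~III, Thm.~1.4]{BeMa}; the proof there is essentially the one you wrote, so there is nothing to compare.
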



Contraction groups for Lie or $p$-adic analytic groups are always closed (see \cite[page 2]{CaMe13}). There is a partial converse to this statement (\cite[Theorem B]{CaMe13}): if $G\leq \Aut(T)$ is closed, non-compact, transitive on $\partial T$ and have torsion-free closed contraction groups, then $G$ is a linear group.%

For groups with Tits' independence property, we can identify the closure of the contraction groups:

\begin{lemma}
\label{lem::closure_contraction_gr}
Let $G \leq \Aut(T)$ be a closed subgroup with Tits' independence property. 
Then the closure of $U^+$ in $G$ is $G^{0}_{\xi}$. 
\end{lemma}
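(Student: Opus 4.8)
The plan is to show the two inclusions $\overline{U^+}\subseteq G^0_\xi$ and $G^0_\xi\subseteq\overline{U^+}$ separately. The first inclusion is the easy one: every $g\in U^+$ satisfies $a^{-n}ga^n\to\id$, so $g$ fixes $\xi$ (since $a$ does and $g$ is a limit of conjugates of elements whose displacement shrinks), and moreover $g$ must be elliptic — otherwise $g$ would have a fixed end $\eta\neq\xi$ in $\partial T$ or an axis, and conjugating by $a^{-n}$ would push this structure toward $\xi$ without the translation length going to zero, contradicting $a^{-n}ga^n\to\id$. Hence $U^+\subseteq G^0_\xi$, and since $G^0_\xi$ is closed (it is the set of elliptic elements fixing $\xi$, which is closed in $G_\xi$ because translation length is continuous and ellipticity is a closed condition), we get $\overline{U^+}\subseteq G^0_\xi$.

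For the reverse inclusion, the idea is to approximate an arbitrary $g\in G^0_\xi$ by elements of $U^+$ using Tits' independence, exploiting the fact that $g$, being elliptic and fixing $\xi$, fixes pointwise an entire ray $[v_N,\xi)$ for some vertex $v_N$ on the geodesic line $(\xi_-,\xi)$. Concretely, fix the enumeration $\{x_i\}_{i\in\bbZ}$ of the vertices of $(\xi_-,\xi)$ with $a$ translating toward $\xi$. Given $g\in G^0_\xi$, there is an index $N$ such that $g$ fixes $x_i$ for all $i\ge N$; equivalently $g$ acts trivially on the halftree "beyond $x_N$ toward $\xi$" and all its action is supported on the complementary halftree hanging off $x_N$ in the direction away from $\xi$. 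Now I would write, using Tits' independence applied to the edge $[x_{N-1},x_N]$ (or to the subtree consisting of a suitable segment of $(\xi_-,\xi)$), a decomposition $g = g' g''$ where $g'$ is supported in the subtree $T_{[x_N,x_{N-1}]}$ (the part away from $\xi$) and $g''$ is supported in $T_{[x_{N-1},x_N]}$; but since $g$ already fixes everything on the $\xi$-side of $x_N$, one can arrange that $g$ itself is supported in $T_{[x_N,x_{N-1}]}$. The key point: conjugating such a $g$ by $a^{-n}$ moves its support to $T_{[x_{N-2n},x_{N-2n-1}]}$, still a halftree pointing away from $\xi$, and as $n\to\infty$ this pushes the support "to infinity toward $\xi_-$", so $a^{-n}g a^n$ — wait, this is the wrong direction. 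Instead I should observe: $g$ is supported away from $\xi$, so $a^{n}ga^{-n}$ has support pushed toward $\xi_-$; to land in $U^+$ I need $a^{-n}(\cdot)a^{n}\to\id$. So rather than conjugating $g$ directly, I would produce elements $g_n\in U^+$ converging to $g$: let $g_n$ be the element that agrees with $g$ on the halftree $T_{[x_N,x_{N-1}]}$ but is made to "die off" far from $x_N$ — more precisely, take $g_n := $ an element equal to $g$ on $B(x_N,n)\cap T_{[x_N,x_{N-1}]}$ and the identity outside; Tits' independence (in the subtree form stated after Definition \ref{def::Tits_prop}) guarantees such $g_n\in G$. Each $g_n$ fixes $\xi$, is elliptic, and has support contained in a finite ball, so $a^{-m}g_n a^{m}\to\id$ as $m\to\infty$ (the support gets contracted to a point and eventually beyond), whence $g_n\in U^+$; and $g_n\to g$ in $G$ since $g_n$ agrees with $g$ on larger and larger balls. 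This gives $g\in\overline{U^+}$.

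The main obstacle I anticipate is the bookkeeping in the second inclusion: making the decomposition "restrict $g$ to a finite piece of its support and extend by the identity" rigorous requires the subtree version of Tits' independence quoted in \S\ref{subsub.Tits.indep}, and one must check that the resulting truncation $g_n$ genuinely lies in $G$ (not just in $\Aut(T)$) — this is exactly what closedness of $G$ plus Tits' independence buys. A secondary subtlety is verifying that an elliptic element of $G_\xi$ with support confined to a fixed ball really is contracted to the identity by conjugation by $a^{-m}$: here one uses that $a$ has translation length $2$ with attracting point $\xi$, so $a^{-m}$ moves any fixed finite ball off toward $\xi_-$, and an automorphism supported on a region disjoint from $B(x_k,1)$ for all large $k$ that also fixes $\xi$ must be trivial — a standard fact about $\Aut(T)$. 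Once these two technical points are in place, the two inclusions combine to give $\overline{U^+}=G^0_\xi$.
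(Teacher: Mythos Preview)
Your first inclusion is essentially fine (though your justification for ``$g$ elliptic and fixes $\xi$'' is vaguer than it needs to be: from $a^{-n}ga^n\to\id$ one gets directly that $g(x_{2n})=x_{2n}$ for all large $n$, hence $g$ fixes a subray toward $\xi$, which settles both points at once).

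The genuine gap is in the reverse inclusion. You assert that if $g\in G^0_\xi$ fixes the ray $[x_N,\xi)$, then ``$g$ acts trivially on the halftree beyond $x_N$ toward $\xi$'' and hence $g$ is supported in $T_{[x_N,x_{N-1}]}$. This is false: fixing the ray $[x_N,\xi)$ pointwise says nothing about the branches hanging off $x_{N+1},x_{N+2},\ldots$, and $g$ may permute those nontrivially. Your truncations $g_n$, being the identity on all of $T_{[x_{N-1},x_N]}$, therefore need not converge to $g$.

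The paper's remedy is precisely to move the truncation point along the ray toward $\xi$ rather than to confine support to a finite ball. For each $n\ge i$ (where $g$ fixes $[x_i,\xi)$), Tits' independence applied to the single edge $[x_n,x_{n+1}]$ produces $u_n\in G$ that agrees with $g$ on $T_{[x_{n+1},x_n]}$ and is the identity on $T_{[x_n,x_{n+1}]}$. Then $u_n\in U^+$ because it is trivial on a halftree containing $\xi$ (so $a^{-m}u_na^m\to\id$), and $u_n\to g$ because $T_{[x_{n+1},x_n]}$ swallows any fixed ball as $n\to\infty$. Your ``die off in a finite ball'' idea is not needed and is what introduces the error; truncating only on the $\xi$-side, one edge at a time, both simplifies the use of Tits' independence and correctly captures all of $g$'s action in the limit.
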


\begin{proof}
Since $G^0_\xi$ is closed and contains $U^+$, it is enough to show that given $g \in G^0_\xi$, one can find a sequence $u_n \in U^+$ converging to $g$. This is seen by Tits' independence property. Since $g \in G^0_\xi$, $g$ fixes pointwise some ray $[x_i,\xi)$. For every $n \geq i$, by Tits' independence property, there exists an element $u_n \in G$, fixing pointwise $T_{[x_{n},x_{n+1}]}$ and agreeing with the action of $g$ on $T_{[x_{n+1},x_{n}]}$. Clearly, $u_n \in U^+$ and $u_n \to g$ as $n \to \infty$, as desired.
\end{proof}

\subsubsection{Haar measures of $G$ and its subgroups}
\label{subsub.haar}
Since $G$ acts $2$-transitively on $\partial T$, $G$ is unimodular (\cite[Proposition 6]{Amann}). 

Explicit descriptions of the Haar measure on $G$ and $G_\xi$ will be used throughout the proofs of Theorems \ref{thm.measure.classification}, \ref{thm.unique.ergo} and  \ref{thm.geo.fin}. These descriptions rely on decompositions from \S \ref{subsub.decompositions} and the following standard fact regarding the product structure of Haar measures.




\begin{lemma}[Product structure of Haar measures]\label{lemma.product.structure}
Let $G$ be a unimodular locally compact Polish group, and $S_1,S_2$ two closed subgroups of $G$ such that the intersection $K:=S_1 \cap S_2$ is compact. Assume that $S_1 S_2$ is $m_G$-conull in $G$. Then,  $m_G$ is proportional to the push-forward of $m_{S_1} \otimes m_{S_2}^r$ under the multiplication map, where $m_{S_2}^r$ denotes the right Haar measure on $S_2$, i.e.
$$\int_{G} f(x)dm_G(x)= \int_{S_1 \times S_2} f(st) \Delta_{S_2}(t) dm_{S_1}(s)dm_{S_2}(t) $$
for any Borel $f \geq 0$ on $G$, where $\Delta_{S_{2}}$ is the modular function of $S_{2}$.
\end{lemma}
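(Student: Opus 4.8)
The plan is to realize the measure $\nu$ defined by the right-hand side, $\nu(f) = \int_{S_1 \times S_2} f(st)\,\Delta_{S_2}(t)\,dm_{S_1}(s)\,dm_{S_2}(t)$, as a nonzero left-invariant Radon measure on $G$ and then invoke uniqueness of Haar measure. First I would record the elementary point that the multiplication map $\mathrm{mult}\colon S_1 \times S_2 \to G$ is continuous, proper modulo $K$ (since $K = S_1 \cap S_2$ is compact), and that its image $S_1 S_2$ is $m_G$-conull, hence in particular dense and of full measure; this will let me identify pushforwards on $S_1 S_2$ with measures on $G$ up to the null set. The key structural observation is that $\nu$ should be thought of as the pushforward to $G$ of the measure $m_{S_1} \otimes m_{S_2}^r$ on $S_1 \times S_2$ under $\mathrm{mult}$, where I use $dm_{S_2}^r(t) = \Delta_{S_2}(t)\,dm_{S_2}(t)$; the content of the lemma is that this pushforward is well-defined as a Radon measure and is left-$G$-invariant.

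The heart of the argument is left-invariance of $\nu$ under $G$. By density of $S_1 S_2$ and continuity it suffices to check invariance under left translation by elements of $S_1$ and by elements of $S_2$ separately (their products generate a conull, hence suffices-to-check set; more carefully, one checks invariance under the full group by a limiting/approximation argument, or simply checks $g = s_0 t_0 \in S_1S_2$ directly). For $s_0 \in S_1$: substituting $s \mapsto s_0^{-1} s$ and using left-invariance of $m_{S_1}$ leaves $\nu$ unchanged, since the $t$-integral is untouched. For $t_0 \in S_2$: left translation by $t_0$ sends $st \mapsto t_0 s t$; here one must move $t_0$ past $s$, which is not possible inside $S_1 \times S_2$ directly, so instead I would verify invariance under \emph{right} translation by $S_2$ and combine with a unimodularity argument, OR — cleaner — verify directly that $\nu$ is right-$S_2$-invariant (substitute $t \mapsto t t_0^{-1}$, pick up the factor $\Delta_{S_2}(t_0)$ from $m_{S_2}$, which is exactly cancelled because $\Delta_{S_2}(t)\,dm_{S_2}(t)$ is the right Haar measure $m_{S_2}^r$, which is right-invariant), and is left-$S_1$-invariant as above, then use that a measure on $G$ invariant under left $S_1$ and right $S_2$ translation, with $S_1 S_2$ conull, together with unimodularity of $G$, must be a Haar measure. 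Actually the most transparent route: show $\nu$ is left-$S_1$-invariant and right-$S_2$-invariant, deduce it is left-invariant under $S_1S_2$ hence (by density + regularity) under $G$, using that $G$ is unimodular so left and right translations by the conull subsemigroup agree in the relevant sense.

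Then I would check $\nu$ is nonzero (clear, take $f$ a nonzero nonnegative bump supported in a small product neighborhood, using that $\mathrm{mult}$ is open near points where it is submersive / using that $S_1 S_2$ has nonempty interior since it is conull in a Baire group) and Radon (local finiteness follows from properness of $\mathrm{mult}$ modulo the compact $K$: preimages of compacta in $G$ are compact modulo $K$, and $K$ has finite Haar measure). By uniqueness of left Haar measure on $G$, $\nu = c\, m_G$ for some $c > 0$, which is the claim. The main obstacle I anticipate is the bookkeeping around the compact intersection $K$: one must be careful that $m_{S_1} \otimes m_{S_2}^r$ is not literally the pullback of $m_G$ but only proportional after accounting for the (finite, constant) fiber mass of $K$, and that all the measures involved are genuinely Radon despite $\mathrm{mult}$ being non-injective along $K$. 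Handling this cleanly — probably by passing through $S_1 \times_K S_2$ or by a direct covering argument — is where the care is needed; the invariance computations themselves are short one-line substitutions once the measure-theoretic framework is set up.
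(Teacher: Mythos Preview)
Your invariance computations are correct: the pushforward $\nu$ is indeed left-$S_1$-invariant and right-$S_2$-invariant by one-line substitutions. The genuine gap is the step you flag yourself but then handwave: passing from ``left-$S_1$-invariant and right-$S_2$-invariant'' to ``left-$G$-invariant''. Your proposed mechanism --- density of $S_1S_2$ plus unimodularity of $G$ --- does not do this. For $g=s_0t_0$ you would need $L_{t_0}\nu=\nu$, but you only have $R_{t_0}\nu=\nu$; unimodularity of $G$ relates left and right translation for $m_G$, not for an arbitrary $\nu$, so there is no way to convert right-$S_2$-invariance of $\nu$ into left-$S_2$-invariance without already knowing $\nu$ is Haar. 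Likewise your Radon-ness argument assumes the multiplication map is proper modulo $K$, but under the stated hypotheses one only gets a Borel isomorphism $(S_1\times S_2)/\operatorname{diag}(K)\to S_1S_2$, not a homeomorphism, so preimages of compacta need not be compact modulo $K$.

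The paper closes both gaps simultaneously by taking a different route: it observes that $S_1\times S_2$ acts on $\Omega:=S_1S_2$ by $(s,t)\cdot g=sgt^{-1}$ with stabilizer $\operatorname{diag}(K)$ at the identity, and invokes Suslin's theorem to upgrade the continuous bijection $(S_1\times S_2)/\operatorname{diag}(K)\to\Omega$ to a Borel isomorphism. This realizes $\Omega$ as a Borel homogeneous space for $S_1\times S_2$; since $\operatorname{diag}(K)$ is compact, the invariant measure on this quotient is unique up to scalar, and both $m_G|_\Omega$ and the pushforward of $m_{S_1}\otimes m_{S_2}^r$ are $(S_1\times S_2)$-invariant (this is exactly your left-$S_1$/right-$S_2$ computation). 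The rest is then the standard computation from Knapp's Theorem~8.32. In short, the missing structural ingredient in your plan is precisely the Borel homogeneous-space identification via Suslin; once you add it, your invariance checks become the operative step, but the argument has then essentially become the paper's.
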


\begin{proof}
The subset $\Omega:=S_1S_2 \subseteq G$ is a standard Borel space. The group $S_1 \times S_2$ acts on $\Omega$ and the stabilizer of $\id \in \Omega$ is $diag(K)$ (i.e. $K$ diagonally embedded in $S_1 \times S_2$). Therefore, we have a map $(S_1 \times S_2)/diag(K) \to \Omega$, given by $(s,t)diag(K) \mapsto st^{-1}$. By construction, this is a continuous bijection between two standard Borel spaces, and by Suslin's theorem, it is a Borel isomorphism, meaning that its inverse is also Borel measurable. From here, the proof goes as in  \cite[Proof of Theorem 8.32]{Knapp}. 
\end{proof}




Let us now apply Lemma \ref{lemma.product.structure} to Bruhat and $AN$ decompositions:\\[7pt]
(1) In Lemma \ref{lemma.product.structure}, let $S_1=G^0_{\xi_-}$ and $S_2=G_{\xi}$. The group $S_1 \cap S_2 = G_{(\xi_-,\xi)}$ is compact.  Hence, the restriction of $m_G$ to $G^0_{\xi_-}G_{\xi}$ is proportional to the push-forward of $m_{G^0_{\xi_-}}\otimes m_{G_{\xi}}^r$ by the multiplication map. Note that $G_{\xi}$ does not contain any neighborhood  of identity in $G$, hence $m_G(G_{\xi}^{0})=m_G(G_{\xi})= 0$. Therefore, it follows by Bruhat decomposition that the subset $G^0_{\xi_-} G_{\xi}$ is $m_G$-conull in $G$. \\[-4pt]

\noindent (2) 
The group $G^0_{\xi}$ is an increasing union of the compact groups $G_{[x_k,\xi)}$ and hence, it is unimodular. By $AN$ decomposition $G_{\xi}=a^{\bbZ} G^0_{\xi}$. By Lemma \ref{lemma.product.structure} with $S_1=a^\bbZ$ and $S_2=G^{0}_{\xi}$, $m_{G_{\xi}}$ is proportional to the push-forward of $m_{S_1} \otimes m^r_{G^0_{\xi}}$, where $m_{S_1}$ is the counting measure.

We normalize $m_{G^0_{\xi}}$ so that $m_{G^0_{\xi}}(G_{[x_0,\xi)})=1$ and similarly for $m_{G^0_{\xi_-}}$. Since $G$ acts $2$-transitively on $\partial T$, $G_{\xi}$ acts transitively on $\partial T \setminus \{\xi\}$. Hence for $k \in \mathbb{N}$, and $\ell$ an even integer, $G_{[x_{\ell},\xi)}$ has index $(d_0-1)^{\lfloor \frac{k}{2} \rfloor}(d_1-1)^{\lceil \frac{k}{2}  \rceil}$ in $G_{[x_{\ell+k},\xi)}$ (if $\ell$ is odd, to find the index, exchange $d_0$ and $d_1$). Consequently, for $k \in \mathbb{Z}$,  $m_{G^0_{\xi}}(G_{[x_k,\xi)})=(d_0-1)^{\lfloor \frac{k}{2} \rfloor}(d_1-1)^{\lceil \frac{k}{2}  \rceil}$.

From the above descriptions of $m_G$ and $m_{G_{\xi}}$, we deduce the following formula describing the restriction of $m_G$ to $G^0_{\xi_-} G^0_{\xi}$.

\begin{cor}\label{lemma.Haar.VAU} For any $f \in C_c(G)$ supported on $G^0_{\xi_-} G^0_{\xi}$, we have 
$$
\int_G f(g)dm_G(g)= \int_{G^0_{\xi}} \int_{G^0_{\xi_-}}  f(v u) dm_{G^0_{\xi_-}}(v) dm_{G^0_{\xi}}(u).
$$ 
\end{cor}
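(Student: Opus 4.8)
The plan is to stack the two product decompositions of \S\ref{subsub.haar} on top of each other and then invoke the support hypothesis to discard all but one term. First I would apply item (1) of \S\ref{subsub.haar} (Lemma \ref{lemma.product.structure} for the Bruhat decomposition): since $G^0_{\xi_-}G_\xi$ is $m_G$-conull, for any $f\in C_c(G)$
$$\int_G f\, dm_G \;=\; c\int_{G^0_{\xi_-}}\!\int_{G_\xi} f(vh)\,\Delta_{G_\xi}(h)\,dm_{G_\xi}(h)\,dm_{G^0_{\xi_-}}(v),$$
with $c>0$ a constant depending only on normalizations. Next I would plug in item (2): since $G^0_\xi\lhd G_\xi$ and $G_\xi=a^{\mathbb Z}G^0_\xi$, the left (hence right) cosets of $G^0_\xi$ in $G_\xi$ are the sets $a^nG^0_\xi$, $n\in\mathbb Z$, and, using that $G^0_\xi$ is unimodular, there is $c'>0$ with $\int_{G_\xi}\phi\,dm_{G_\xi}=c'\sum_{n\in\mathbb Z}\int_{G^0_\xi}\phi(a^nu)\,dm_{G^0_\xi}(u)$. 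Substituting and expanding $\Delta_{G_\xi}(a^nu)=\Delta_{G_\xi}(a)^n\Delta_{G_\xi}(u)$ yields
$$\int_G f\, dm_G \;=\; cc'\sum_{n\in\mathbb Z}\Delta_{G_\xi}(a)^n\int_{G^0_{\xi_-}}\!\int_{G^0_\xi}f(va^nu)\,\Delta_{G_\xi}(u)\,dm_{G^0_\xi}(u)\,dm_{G^0_{\xi_-}}(v).$$

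Two reductions then finish the computation. The first is that $\Delta_{G_\xi}$ is trivial on $G^0_\xi$: this holds because $G^0_\xi=\bigcup_k G_{[x_k,\xi)}$ is an increasing union of compact groups, so the continuous homomorphism $\Delta_{G_\xi}\colon G_\xi\to\mathbb R_{>0}$ has trivial (compact) image on each $G_{[x_k,\xi)}$, hence on $G^0_\xi$. The second, which is the only place the hypothesis $\supp f\subseteq G^0_{\xi_-}G^0_\xi$ is used, is that $G^0_{\xi_-}a^nG^0_\xi\cap G^0_{\xi_-}G^0_\xi=\varnothing$ for $n\neq 0$, so only the $n=0$ summand survives. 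Granting these, the identity collapses to $\int_G f\,dm_G=cc'\int_{G^0_{\xi_-}}\!\int_{G^0_\xi}f(vu)\,dm_{G^0_\xi}(u)\,dm_{G^0_{\xi_-}}(v)$; it then remains only to fix the normalization of $m_G$ so that $cc'=1$ (the other three measures having been normalized in \S\ref{subsub.haar}), and to swap the order of integration by Fubini to match the stated form.

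The one nontrivial point — the main obstacle — is the disjointness $G^0_{\xi_-}a^nG^0_\xi\cap G^0_{\xi_-}G^0_\xi=\varnothing$ for $n\neq 0$. I would prove it as follows: suppose $v_1a^nu_1=v_2u_2$ with $v_i\in G^0_{\xi_-}$ and $u_i\in G^0_\xi$, and set $w:=v_2^{-1}v_1\in G^0_{\xi_-}$, so that $wa^n=u_2u_1^{-1}\in G^0_\xi$. Since $a^n$ and $wa^n$ both fix $\xi$, the element $w=(wa^n)a^{-n}$ fixes $\xi$; together with $w(\xi_-)=\xi_-$, this shows $w$ preserves the geodesic line $(\xi_-,\xi)$, and as $w$ is elliptic (being in $G^0_{\xi_-}$) it must fix this line pointwise, so $w\in M$. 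But then $wa^n$ translates $(\xi_-,\xi)$ by $2n$, hence is hyperbolic whenever $n\neq 0$, contradicting $wa^n\in G^0_\xi$. Everything else is routine bookkeeping with the product decompositions of \S\ref{subsub.haar}.
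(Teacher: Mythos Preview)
Your proposal is correct and follows exactly the route the paper indicates: the corollary is stated as an immediate consequence of items (1) and (2) of \S\ref{subsub.haar}, and you have simply written out those details, including the disjointness of the $a^n$-slices (which the paper leaves implicit). The only minor remark is that the paper does not explicitly pin down the normalization of $m_G$, so your observation that one chooses it to absorb the constant $cc'$ is the right reading of the equality sign in the statement.
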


\subsection{Ergodic theory for amenable groups}

In the transverse divergence argument in the proof Theorem \ref{thm.measure.classification}, we need to replace the use of Birkhoff's ergodic theorem in the study of one parameter flows, by a pointwise ergodic theorem for amenable groups, in our case $G^0_{\xi}$. Such a theorem holds with averaging over well-behaved sets. These sets come from tempered F{\o}lner sequences introduced by Shulman.

\subsubsection{Pointwise ergodic theorem for the amenable group $G_{\xi}^{0}$}
\label{sec::ergodic_theorem}

A sequence $F_1, F_2, \dots$ of compact subsets of a locally compact group $H$ is called a \textbf{F{\o}lner sequence} if for every compact $K \subseteq H$ and $\varepsilon>0$, there exists $N>0$ such that $m_H(F_i \; \Delta \;  KF_i) < \varepsilon  m_H(F_i)$, for all $i \geq N$. 
The sequence $F_i$ is called a \textbf{tempered F{\o}lner sequence}  if in addition there is $C >0$ such that for every $i \in \bbN$ 
$$m_{H}\left(\bigcup\limits_{k=1}^{i-1}F^{-1}_{k} F_i\right) \leq C   m_{H}(F_i).$$

The following result is due to Lindenstrauss.

\begin{theorem}(Pointwise ergodic theorem for amenable groups \cite[Theorem 1.2]{Lind})
\label{thm.lindenstrauss.ergodic}
Let $H$ be amenable group acting ergodically by measure preserving transformations on a Lebesgue probability space $(X,\mu)$. Let $F_i$ be a tempered F{\o}lner sequence of $H$. Then, for every $f \in L^1(X)$, for $\mu$-a.e. $x \in X$
\begin{equation}\label{eq.erg.convergence}
\frac{1}{m_H(F_i)}\int_{F_i} f(g x) dm_H(g) \underset{i \to \infty}{\longrightarrow} \int_{X} f d\mu .
\end{equation} 

\end{theorem}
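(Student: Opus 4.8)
The plan is to follow Lindenstrauss's argument, which has two largely independent ingredients: a weak-type $(1,1)$ maximal inequality for the tempered averages, and the identification of a dense subspace of $L^1(X)$ along which convergence is elementary; these are then combined via the Banach principle (a maximal inequality together with a.e.\ convergence on a dense subspace yields a.e.\ convergence for every $L^1$ function). Throughout write $A_i f(x):=\frac{1}{m_H(F_i)}\int_{F_i} f(gx)\,dm_H(g)$ and $Mf(x):=\sup_i A_i|f|(x)$.

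\textbf{Step 1 (maximal inequality).} I would first show that there is a constant $C'$, depending only on the temperedness constant $C$, with $\mu(\{Mf>\lambda\})\le \frac{C'}{\lambda}\|f\|_{L^1(X)}$ for all $f\in L^1(X)$ and $\lambda>0$; this also yields the measurability and a.e.\ finiteness of $Mf$. By truncating the supremum to $1\le i\le N$ and applying Fubini --- the classical transference of the problem on $X$ to a covering problem on the acting group $H$, as in the Wiener/Hardy--Littlewood scheme --- this reduces to a purely group-theoretic covering lemma: from any finite family of left translates $\{F_{n(j)}g_j\}_j$ of sets from the F{\o}lner sequence one can extract a subfamily $\mathcal D$ whose union covers at least a fixed proportion $\delta(C)>0$ (in $m_H$) of the union of the whole family, while $\int_H \sum_{F\in\mathcal D}\mathbbm{1}_F\,dm_H \le C''\, m_H\big(\bigcup\mathcal D\big)$.

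\textbf{Step 2 (the covering lemma --- the main obstacle).} This is the crux, and the reason the hypothesis is \emph{temperedness} rather than merely the F{\o}lner condition: since F{\o}lner sets need be neither nested nor doubling, the naive greedy Vitali selection fails. Following Lindenstrauss, I would make the selection by a randomized procedure --- examine the candidate sets in order of decreasing index $n(j)$, retaining each with a probability calibrated, conditionally on the sets kept so far, to the portion of it not yet covered. The temperedness bound $m_H\big(\bigcup_{k<i}F_k^{-1}F_i\big)\le C\, m_H(F_i)$ is precisely what controls, in expectation, how much a set examined later can be pre-covered by sets examined earlier, and a first-moment computation then shows that with positive probability the retained family simultaneously has controlled overlap and captures a definite fraction of the union, which is the covering lemma. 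Converting the covering lemma into the weak-$(1,1)$ bound for $M$ is then routine.

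\textbf{Step 3 (dense class and conclusion).} Let $\pi$ be the Koopman representation, $(\pi(s)h)(x)=h(s^{-1}x)$; since $\mu$ is $H$-invariant, $\pi$ is unitary on $L^2(X)$ and $\int_X \pi(s)h\, d\mu = \int_X h\, d\mu$. The orthogonal complement in $L^2(X)$ of $\overline{\operatorname{span}}\{h-\pi(s)h:\ s\in H,\ h\in L^2(X)\}$ is the space of $H$-invariant functions, which is $\mathbb C\cdot 1$ by ergodicity; as $L^\infty(X)$ is dense in $L^2(X)$, the finite sums $f_0=c\cdot 1+\sum_j\big(h_j-\pi(s_j)h_j\big)$ with $h_j\in L^\infty(X)$ form an $L^1$-dense subspace, and $\int_X f_0\,d\mu=c$. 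For such $f_0$, using left invariance of $m_H$ followed by the F{\o}lner property, $\big|A_i\big(h_j-\pi(s_j)h_j\big)(x)\big|\le \frac{m_H(F_i\,\Delta\, s_j^{-1}F_i)}{m_H(F_i)}\,\|h_j\|_\infty\to 0$ uniformly in $x$, so $A_i f_0\to \int_X f_0\,d\mu$ everywhere. Finally, given $f\in L^1(X)$ and $t>0$, write $f=f_0+r$ with $f_0$ in the dense class and $\|r\|_{L^1(X)}<\varepsilon<t$; then $\limsup_i |A_i f-\int_X f\,d\mu|\le Mr+\varepsilon$ pointwise, hence $\mu(\{\limsup_i|A_i f-\int_X f\,d\mu|>2t\})\le \mu(\{Mr>t\})\le \frac{C'}{t}\varepsilon$, and letting $\varepsilon\to 0$ and then $t\to 0$ gives \eqref{eq.erg.convergence}.
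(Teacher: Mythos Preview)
The paper does not give its own proof of this theorem: it is quoted verbatim as \cite[Theorem 1.2]{Lind} and used as a black box, so there is nothing to compare your argument against in the paper itself. Your sketch is a faithful outline of Lindenstrauss's original proof --- the weak-type $(1,1)$ maximal inequality obtained via transference and the randomized covering lemma (for which temperedness is the exact hypothesis), combined with convergence on the dense subspace of constants plus $L^\infty$-coboundaries --- and the steps you describe are correct at the level of detail given.
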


The following lemma will allow us to construct many tempered F{\o}lner sequences for $G_\xi^0$. 

\begin{lemma}
\label{lem::general_Folner} 
With the notation of \S \ref{subsection.notation}, let $F \subset G^0_\xi$ be an $M$-invariant compact subset of $G_\xi^0$ of positive $m_{G^0_\xi}$ measure. Then the sequence $a^i F a^{-i}$ is a tempered F{\o}lner sequence for $G^0_\xi$.
\end{lemma}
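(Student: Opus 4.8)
The plan is to renormalise everything by conjugation with $a$. Since $a$ fixes $\xi$ it normalises $G^0_\xi$, and since it preserves the axis $(\xi_-,\xi)$ it normalises $M=G^0_{\xi_-}\cap G^0_\xi$, which coincides with the pointwise stabiliser $G_{(\xi_-,\xi)}$ (an elliptic element fixing $\xi$ and $\xi_-$ fixes a ray towards each of them, hence the whole line). As $G^0_\xi$ is unimodular, the automorphism $c_a\colon g\mapsto aga^{-1}$ scales $m_{G^0_\xi}$ by a constant; by the index computation recalled in \S\ref{subsub.haar}, $aG_{[x_k,\xi)}a^{-1}=G_{[x_{k+2},\xi)}$ has $(d_0-1)(d_1-1)$ times the measure of $G_{[x_k,\xi)}$, so $m_{G^0_\xi}(a^iSa^{-i})=q^i\,m_{G^0_\xi}(S)$ for every Borel $S$, with $q:=(d_0-1)(d_1-1)$; equivalently $\phi_i\colon g\mapsto a^{-i}ga^i$ multiplies $m_{G^0_\xi}$ by $q^{-i}$. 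The geometric input is that $a^{-i}G_{[x_k,\xi)}a^i=G_{[x_{k-2i},\xi)}$ is a \emph{decreasing} family of compact open subgroups with $\bigcap_i G_{[x_{k-2i},\xi)}=G_{(\xi_-,\xi)}=M$: conjugation by $a^{-i}$ squeezes compact subsets of $G^0_\xi$ into arbitrarily small neighbourhoods of $M$. Recall also that $G^0_\xi=\bigcup_\ell G_{[x_\ell,\xi)}$ is an increasing union of compact open subgroups, so every compact subset of $G^0_\xi$ is contained in some $G_{[x_\ell,\xi)}$.

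Write $F_i:=a^iFa^{-i}$, so $m_{G^0_\xi}(F_i)=q^i\,m_{G^0_\xi}(F)>0$. For temperedness, $\phi_i$ (which preserves products and inverses) carries $\bigcup_{k=1}^{i-1}(a^kFa^{-k})^{-1}(a^iFa^{-i})$ to $\bigcup_{j=1}^{i-1}(a^{-j}F^{-1}a^j)F$ via $j=i-k$. Fixing $m$ with $F\subseteq G_{[x_m,\xi)}$, for every $j\ge 0$ we have $a^{-j}F^{-1}a^j\subseteq a^{-j}G_{[x_m,\xi)}a^j=G_{[x_{m-2j},\xi)}\subseteq G_{[x_m,\xi)}$, so this union lies inside the fixed compact subgroup $G_{[x_m,\xi)}$, whose measure $c_m$ is independent of $i$. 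Hence $m_{G^0_\xi}\big(\bigcup_{k=1}^{i-1}(a^kFa^{-k})^{-1}(a^iFa^{-i})\big)\le q^i c_m=\big(c_m/m_{G^0_\xi}(F)\big)\,m_{G^0_\xi}(F_i)$, and $C:=c_m/m_{G^0_\xi}(F)$ works; positivity of $m_{G^0_\xi}(F)$ is used here.

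For the F{\o}lner condition, fix a nonempty compact $K\subseteq G^0_\xi$, choose $k_0$ with $K\cup\{\id\}\subseteq G_{[x_{k_0},\xi)}$, pick $k_\ast\in K$, and bound $m_{G^0_\xi}(F_i\,\Delta\,KF_i)\le m_{G^0_\xi}(KF_i\setminus F_i)+m_{G^0_\xi}(F_i\setminus k_\ast F_i)$ (using $k_\ast F_i\subseteq KF_i$). Put $H_i:=G_{[x_{k_0-2i},\xi)}$. Applying $\phi_i$ to the first term gives $q^i\,m_{G^0_\xi}\big((a^{-i}Ka^i)F\setminus F\big)$, and since $\id\in a^{-i}(K\cup\{\id\})a^i\subseteq H_i$ we get $F\subseteq a^{-i}(K\cup\{\id\})a^i\,F\subseteq H_iF$, so this is $\le q^i\big(m_{G^0_\xi}(H_iF)-m_{G^0_\xi}(F)\big)$. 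Now $(H_i)_i$ decreases with $\bigcap_i H_i=M$, and $M$-invariance of $F$ gives $\bigcap_i H_iF=(\bigcap_i H_i)F=MF=F$; the interchange $\bigcap_i H_iF=(\bigcap_i H_i)F$ is where compactness of $F$ enters (if $x=h_if_i$ with $h_i\in H_i$ and $f_i\in F$, a subsequence $f_{i_j}\to f\in F$ forces $xf^{-1}=\lim_j h_{i_j}\in\bigcap_i H_i=M$, so $x\in MF=F$). Hence $m_{G^0_\xi}(H_iF)\downarrow m_{G^0_\xi}(F)$ by continuity of measure from above, so the first term is $o\big(m_{G^0_\xi}(F_i)\big)$. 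Applying $\phi_i$ to the second term gives $q^i\,m_{G^0_\xi}\big(F\setminus(a^{-i}k_\ast a^i)F\big)$; given a neighbourhood $W$ of $\id$, we have $a^{-i}k_\ast a^i\in H_i\subseteq MW$ for $i$ large, and writing $a^{-i}k_\ast a^i=m_0\delta$ with $m_0\in M$, $\delta\in W$ and using $m_0^{-1}F=F$, we obtain $m_{G^0_\xi}\big(F\setminus(a^{-i}k_\ast a^i)F\big)=m_{G^0_\xi}(F\setminus\delta F)\le m_{G^0_\xi}(F\,\Delta\,\delta F)$, which is arbitrarily small for $W$ small by continuity of left translation on $L^1(G^0_\xi)$. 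Combining, $m_{G^0_\xi}(F_i\,\Delta\,KF_i)/m_{G^0_\xi}(F_i)\to 0$.

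The soft parts are the scaling of $c_a$ and the temperedness bound; the genuinely delicate point is the F{\o}lner estimate, where $M$-invariance of $F$ is indispensable through the identity $\bigcap_i H_iF=F$ and the companion bound on $F_i\setminus k_\ast F_i$ — for a general compact $F$ both would produce $MF\supsetneq F$ and the F{\o}lner ratio would not vanish.
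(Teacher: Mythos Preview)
Your proof is correct and follows essentially the same route as the paper: both rescale by conjugation with $a^{-i}$ to reduce the F{\o}lner ratio to $m\big((a^{-i}Ka^i)F\,\Delta\,F\big)/m(F)$, then exploit that $a^{-i}Ka^i$ shrinks into any neighbourhood of $M$ together with left $M$-invariance of $F$, and handle temperedness by trapping everything in a fixed compact open subgroup. The only cosmetic differences are that you bound $(a^{-i}Ka^i)F\setminus F$ via continuity of measure along the decreasing family $H_iF\downarrow MF=F$, whereas the paper writes $a^{-i}Ka^i\subseteq UM$ directly and bounds by $UF\setminus F$; for the other half you pick a single $k_\ast\in K$ while the paper picks an element $u_im\in a^{-i}Ka^i$ with $u_i\to\id$ --- the underlying idea is identical.
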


\begin{remark}[Contraction to identity]\label{remark.folner.if}
We note that a similar observation in linear algebraic setting appears in \cite[Lemma 7.2]{margulis-tomanov}. The difference in the hypotheses is due to the fact that in linear algebraic case, the conjugation by $a^{-1}$ contracts any compact subset of a unipotent group to identity, whereas in our case this conjugation contracts towards the subgroup $M$ of $G^0_\xi$.
\end{remark}

\begin{proof}[Proof of Lemma \ref{lem::general_Folner}]
Let $F$ be as in the statement. Let $K$ be a compact subset of $G^0_\xi$. By compactness, there exists $j \in \bbZ$ such that $K \subseteq G_{[x_j,\xi)}$. Futhermore, for any neighborhood $U$ of identity in $G^0_\xi$, there exists $N_{U}\geq 1$ such that $a^{-i}Ka^i \subseteq UM$ for $i \geq N_{U}$. Indeed, otherwise one can find a sequence of elements $g_{i_\ell} \in a^{-i_\ell}Ka^{i_\ell} \subseteq G_{[x_{j-i_\ell},\xi)}$ and $g_{i_\ell} \notin UM$. For any $k\leq 0$ the sequence $g_{i_\ell}$ eventually belongs to $G_{[x_k,\xi)}$, hence some limit point $g$ of $g_{i_\ell}$ is in $M$, a  contradiction. 

Since $a$-action by right multiplication multiplies the measure by the value of the modular function of $G_\xi$, we have



\begin{equation}\label{eq.folner.lemma}
\frac{m_{G^0_\xi}((Ka^{i}Fa^{-i}) \Delta (a^i F a^{-i}))}{m_{G^0_\xi}(a^i F a^{-i})}=\frac{m_{G^0_\xi}((a^{-i}Ka^{i}F) \Delta F)}{m_{G^0_\xi}(F)}. 
\end{equation}
By above, for all $i$ large enough, we have
$$ a^{-i}Ka^iF \setminus F \subset UMF \setminus F \subset UF \setminus F$$
and the measure of the latter converges to $0$ when $U$ shrinks to identity. Similarly, as $a^{-i}Ka^i$ shrinks to $M$, for all large $i$, $a^{-i}Ka^i$ contains an element $u_im$, with $m\in M$ and $u_i$ converging to identity. Hence,
$$ F \setminus a^{-i}Ka^iF  \subset F \setminus u_imF \subset F \setminus u_iF $$
and the measure of the latter set converges to $0$ as $i \to +\infty$ (see e.g. \cite[Chapter XII, Theorem A]{halmos}).
Combining last two observations, the expression in \eqref{eq.folner.lemma} goes to $0$ as $i\to \infty$, showing that $a^{i}F a^{-i}$ is a F{\o}lner sequence.

To see that it is also tempered:
$$
\frac{m_{G^0_\xi}\left(\bigcup\limits_{k=1}^{i-1} a^k F^{-1} a^{-k} a^{i} F a^{-i}\right)}{m_{G^0_\xi}(a^i F a^{-i})}=\frac{m_{G^0_\xi}\left(\bigcup\limits_{k=1}^{i-1} a^{k-i} F^{-1} a^{i-k} F\right)}{m_{G^0_\xi}(F)} < \infty.
$$
The last inequality follows, since by compactness of $F$ there exists $j' \in \bbZ$ such that $ F$ and all of its $a^{k-i}$ conjugates are contained in $ G_{[x_{j'},\xi)}$.
\end{proof}

The tempered F{\o}lner sequence described in the following example will be extensively used in Sections \ref{sec.main.proof} and \ref{sec.proof.of.prop}.

\begin{example}[A tempered F{\o}lner sequence]\label{ex.F_i}
For $i \in \mathbb{Z}$, set $F_i:=G_{[x_i,\xi)} \setminus G_{[x_{i-1},\xi)}$. It is readily seen that for each even $i \in \mathbb{Z}$, $F_i$ is a union of $(d_0-2)$ ($(d_1-2)$ if $i$ is odd) non-trivial cosets of $G_{[x_{i-1},\xi)}$ in $G_{[x_i,\xi)}$, in particular, it is compact, $M$-invariant and of positive measure.  
Moreover, for $i,n \in \mathbb{Z}$, we have $a^n F_i a^{-n}=F_{i+2n}$. 
Thus, applying Lemma \ref{lem::general_Folner} to $F_0$ and $F_1$, we deduce that $F_{2n}$ and $F_{2n+1}$ are tempered F{\o}lner sequences for $G^0_\xi$. 
\end{example}

\subsubsection{$\mu$-Uniformly generic  sets}\label{subsub.unif.gen}
Let $H$ be an amenable group acting continuously on a locally compact second countable topological space $X$. Let $\mu$ be an $H$-invariant ergodic probability measure on $X$ and  $F_i$ a tempered F{\o}lner sequence for $H$. By Theorem \ref{thm.lindenstrauss.ergodic}, for a fixed function $f \in C_c(X)$ there exists a $\mu$-full measure subset $X_f \subseteq X$ such that the convergence $(\ref{eq.erg.convergence})$ holds for each $x \in X_f$. 

A point $x \in X$ is called \textbf{$\mu$-generic} if the convergence $(\ref{eq.erg.convergence})$ holds for $\textit{every}$ $f \in C_c(X)$. Since $C_c(X)$ is separable, taking the intersection of $X_{f_i}$'s over a countable dense subset $\{f_i\}$ of $C_c(X)$, there exists a $\mu$-full measure subset $X_{\mu} \subseteq X$ consisting of $\mu$-generic points. 

Since the speed of convergence in $(\ref{eq.erg.convergence})$ may depend on $x \in X_{\mu}$, we will need the following strengthening of a set of $\mu$-generic points (see \S 7.3 in \cite{margulis-tomanov}) over a finite collection of tempered F{\o}lner sequences.

\begin{lemma}[Uniformly generic sets]\label{lemma.unif.gen}
Let $H$ be a locally compact 
amenable group acting continuously on a locally compact second countable topological space $X$ and let $\mu$ be an $H$-invariant and ergodic probability measure on $X$. Given a finite collection $\mathcal{C}$ of tempered F{\o}lner sequences for $H$ and $\delta>0$, there exists a Borel subset $K \subset X$ with $\mu(K)>1-\delta$ and such that for every $f \in C_c(X)$ and $\varepsilon>0$ there exists $n_{f,\varepsilon} \in \mathbb{N}$  satisfying 
$$
\left|  \frac{1}{m_H(F_i)} \int_{F_i} f(g x) dm_H(g) - \int_X f(x) d\mu(x) \right| <\varepsilon
$$
for every $i \geq n_{f,\varepsilon}$, $x \in K$ and any sequence $F_i$ in $\mathcal{C}$. 
\end{lemma}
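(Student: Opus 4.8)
The plan is to deduce this from the pointwise ergodic theorem (Theorem \ref{thm.lindenstrauss.ergodic}) by an Egorov-type argument combined with a diagonalization over a countable dense family of test functions. First I would fix the finite collection $\mathcal{C} = \{(F_i^{(1)})_i, \ldots, (F_i^{(r)})_i\}$ of tempered F{\o}lner sequences and a countable dense subset $\{f_j\}_{j \geq 1}$ of $C_c(X)$ (in the uniform norm, with supports contained in an exhausting sequence of compacta, so density holds in the relevant sense). For each fixed $j$ and each fixed sequence $F^{(\ell)}$ in $\mathcal{C}$, Theorem \ref{thm.lindenstrauss.ergodic} gives that the averages $A_i^{(\ell)} f_j(x) := \frac{1}{m_H(F_i^{(\ell)})}\int_{F_i^{(\ell)}} f_j(gx)\, dm_H(g)$ converge $\mu$-a.e.\ to $\int_X f_j\, d\mu$ as $i \to \infty$. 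Since this is a countable family of a.e.\ convergence statements (indexed by $(j,\ell)$), they hold simultaneously off a single $\mu$-null set.

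Next I would apply Egorov's theorem. For the convergence $A_i^{(\ell)} f_j \to \int f_j\, d\mu$, which holds $\mu$-a.e., Egorov produces for each $(j,\ell)$ and each tolerance a set off which the convergence is uniform. Concretely: given $\delta > 0$, for each pair $(j,\ell)$ choose a Borel set $E_{j,\ell} \subset X$ with $\mu(X \setminus E_{j,\ell}) < \delta \, 2^{-(j+\ell)}$ such that $A_i^{(\ell)} f_j \to \int f_j\, d\mu$ uniformly on $E_{j,\ell}$. Put $K := \bigcap_{j,\ell} E_{j,\ell}$; then $\mu(K) > 1 - \delta$ after summing the geometric tails (rescaling $\delta$ by a harmless constant if one wants the bound exactly, or just choosing the weights to sum to $\delta$). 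On $K$, for every $j$ and every $\ell$ and every $\varepsilon > 0$ there is $n_{j,\ell,\varepsilon}$ so that $|A_i^{(\ell)} f_j(x) - \int f_j\, d\mu| < \varepsilon$ for all $i \geq n_{j,\ell,\varepsilon}$, $x \in K$; taking $n_{j,\varepsilon} := \max_\ell n_{j,\ell,\varepsilon}$ (finite max since $\mathcal{C}$ is finite) handles all sequences in $\mathcal{C}$ at once.

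Finally I would pass from the dense family $\{f_j\}$ to an arbitrary $f \in C_c(X)$ by a standard $3\varepsilon$ argument: given $f$ and $\varepsilon$, pick $f_j$ with $\|f - f_j\|_\infty$ small enough on a compact set containing $\supp f$ that both $|A_i^{(\ell)} f(x) - A_i^{(\ell)} f_j(x)| \leq \|f - f_j\|_\infty$ (using that $A_i^{(\ell)}$ is an average of point evaluations, hence a contraction in sup-norm) and $|\int f\, d\mu - \int f_j\, d\mu| \leq \|f - f_j\|_\infty$ are each $< \varepsilon/3$, then invoke the uniform convergence of $A_i^{(\ell)} f_j$ on $K$ to control the middle term by $\varepsilon/3$ for $i$ large. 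Setting $n_{f,\varepsilon} := n_{j,\varepsilon/3}$ gives the claim.

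The only mild subtlety — the main point requiring a little care rather than a true obstacle — is the density step: $C_c(X)$ with the inductive-limit topology is separable but not normable, so one must be slightly careful that the chosen $f_j$ has support inside a fixed compact set on which uniform approximation is valid; this is arranged by organizing $\{f_j\}$ by support into the compact pieces of an exhaustion of $X$, which is legitimate since $X$ is locally compact and second countable. Everything else is routine measure theory (countable stability of a.e.\ convergence, Egorov, and the sup-norm contraction property of the averaging operators).
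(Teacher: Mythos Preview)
Your proposal is correct and follows essentially the same approach as the paper, which simply notes that ``$C_c(X)$ is separable, the proof follows from Theorem \ref{thm.lindenstrauss.ergodic} and Egoroff's theorem.'' You have spelled out in detail exactly the argument the authors had in mind: the pointwise ergodic theorem on a countable dense family, Egorov to upgrade to uniform convergence on a large set, and a $3\varepsilon$ density argument to pass to all of $C_c(X)$.
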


We call a subset $K \subset X$ satisfying the conclusion of Lemma \ref{lemma.unif.gen} a \textbf{$\mu$-uniformly generic set} for the finite collection of tempered F{\o}lner sequences $\mathcal{C}$. 

\begin{proof}
Noting that $C_c(X)$ is separable, the proof follows from Theorem \ref{thm.lindenstrauss.ergodic} and Egoroff's theorem.
\end{proof}


Before proceeding further, we also single out the following standard application of Fubini's theorem for reader's convenience. It will be used on several occasions. 


\begin{lemma}[Invariant measures and Fubini]\label{lemma.fubini}
Let $G$ be a locally compact group endowed with a $\sigma$-finite measure $m$, acting measurably on a $\sigma$-finite measure space $(X,\mu)$ and preserving the measure $\mu$. Let $Y$ be a measurable  $\mu$-conull subset of $X$. Then for $m$-a.e. $g \in G$ and $\mu$-a.e. $x\in X$ we have $g x \in Y$. \qed
\end{lemma}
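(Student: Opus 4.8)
The plan is to run the standard Tonelli/Fubini argument on the indicator of the ``bad set''. Write $\alpha\colon G\times X\to X$, $\alpha(g,x)=gx$, for the action map, which by hypothesis is measurable for the product $\sigma$-algebra on $G\times X$. Since $Y$, and hence $X\setminus Y$, is measurable, the set
\[
E:=\{(g,x)\in G\times X : gx\notin Y\}=\alpha^{-1}(X\setminus Y)
\]
is measurable in $G\times X$. As $m$ and $\mu$ are both $\sigma$-finite, Tonelli's theorem applies to $\mathbbm{1}_E\ge 0$ and yields
\[
(m\otimes\mu)(E)=\int_G \mu(E_g)\,dm(g)=\int_X m(E^x)\,d\mu(x),
\]
where $E_g=\{x\in X: gx\notin Y\}$ and $E^x=\{g\in G: gx\notin Y\}$ are the slices.

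Next I would evaluate the first of these iterated integrals using $G$-invariance of $\mu$. For each fixed $g\in G$ one has $E_g=\{x: gx\in X\setminus Y\}=g^{-1}(X\setminus Y)$, so invariance gives
\[
\mu(E_g)=\mu\bigl(g^{-1}(X\setminus Y)\bigr)=\mu(X\setminus Y)=0,
\]
the last equality because $Y$ is $\mu$-conull. Hence $\int_G\mu(E_g)\,dm(g)=0$, and therefore $(m\otimes\mu)(E)=0$.

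Finally I would read this back through the identity above. Since $(m\otimes\mu)(E)=0$ and $\mu(E_g)=0$ for every $g$, in particular $\mu(E_g)=0$ for $m$-a.e.\ $g$: that is, for $m$-a.e.\ $g\in G$ we have $gx\in Y$ for $\mu$-a.e.\ $x\in X$, which is exactly the assertion. (Symmetrically, $\int_X m(E^x)\,d\mu(x)=0$ forces $m(E^x)=0$ for $\mu$-a.e.\ $x$, giving the ``$\mu$-a.e.\ $x$, $m$-a.e.\ $g$'' version as well.) There is essentially no real obstacle here; the only two points deserving a word are that ``acting measurably'' is used precisely to guarantee that $E$ lies in the product $\sigma$-algebra so that Tonelli is legitimately applicable, and that the $\sigma$-finiteness of both measures is what lets us invoke Tonelli without integrability hypotheses.
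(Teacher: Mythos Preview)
Your argument is correct and is exactly the standard Fubini/Tonelli computation one has in mind here. The paper in fact omits the proof entirely (the lemma is stated with a \qed and introduced as ``a standard application of Fubini's theorem''), so there is nothing to compare against; your write-up simply fills in the details the authors left to the reader.
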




\subsection{Differentiation of measures on $G_\xi^0$}
Here, our goal is to obtain a differentiation theorem for $G_\xi^0$ in order to  compute the Radon--Nikodym derivatives locally. We use a classical result from geometric measure theory (see Federer \cite[Sections 2.8--2.9]{federer}). To proceed, by the result of Haagerup and Przybyszewska \cite{Haagerup}, 
endow $G^0_\xi$ with a compatible proper left-invariant metric. We note in passing that in our case it is not hard to construct such a metric explicitly.
Each $F_i$ admits a natural metric from its action on a rooted tree and one can take a box space of the sets $F_i$ (see e.g. \cite{khukro} for definition of box spaces). 
 We omit the details of the construction. To verify that differentiation theory applies in our situation, we note the following.

\begin{lemma}
For integers $i \geq 1$ and $j \in \mathbb{Z}$, let $P_{i,j} :=  \{  gG_{B(x_j,i),\xi} \; \vert \; g \in G^0_\xi\}$. For every $i,j$, $P_{i,j}$ is a Borel partition of $G_\xi^0 $, every member of $P_{i,j}$ is bounded and is a union of finitely many members of $P_{i+1,j}$. Additionally, for every $j \in \mathbb{Z}$,
\[  \limsup_{i\to \infty} \{ \diam(S)\;  \vert \;  S\in P_{i,j} \} = 0 .\]
\end{lemma}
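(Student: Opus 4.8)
The plan is to recognise $P_{i,j}$ as the partition of $G_\xi^0$ into left cosets of the subgroup $H_{i,j}:=G_{B(x_j,i),\xi}$, and then to exploit that $H_{i,j}$ is a \emph{compact open} subgroup of $G_\xi^0$. The starting point: since $i\ge 1$, any automorphism fixing $B(x_j,i)$ pointwise fixes an edge, hence is elliptic, so $H_{i,j}=G_{B(x_j,i)}\cap G_\xi$ is contained in $G_\xi^0$; being the intersection of the compact open subgroup $G_{B(x_j,i)}$ (the pointwise stabiliser of a finite set of vertices) with the closed subgroup $G_\xi$, it is a compact open subgroup of $G_\xi^0$, and $P_{i,j}$ is exactly $\{gH_{i,j}:g\in G_\xi^0\}$.

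Granting this, the first three assertions are formal. Left cosets of a subgroup partition the group; each coset $gH_{i,j}$ is open and compact, hence clopen and Borel, and since $G_\xi^0$ is $\sigma$-compact (an increasing union of the compact groups $G_{[x_k,\xi)}$) an open subgroup has only countably many cosets, so $P_{i,j}$ is a countable Borel partition. For boundedness, the metric fixed on $G_\xi^0$ is left-invariant, so $\diam(gH_{i,j})=\diam(H_{i,j})<\infty$ as $H_{i,j}$ is compact. For the refinement property, $B(x_j,i)\subseteq B(x_j,i+1)$ gives $H_{i+1,j}\le H_{i,j}$, and $H_{i+1,j}$ is open in the compact group $H_{i,j}$, whence $[H_{i,j}:H_{i+1,j}]<\infty$; decomposing $H_{i,j}$ into these finitely many cosets of $H_{i+1,j}$ and left-translating by $g$ exhibits $gH_{i,j}$ as a finite disjoint union of members of $P_{i+1,j}$.

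The only assertion with real content is the last one. Since all members of $P_{i,j}$ are isometric to $H_{i,j}$, it reduces to showing $\diam(H_{i,j})\to 0$ as $i\to\infty$. The structural input is that $(H_{i,j})_{i\ge 1}$ is a \emph{decreasing} sequence of compact subgroups with
\[ \bigcap_{i\ge 1}H_{i,j}\ \subseteq\ \bigcap_{i\ge 1}G_{B(x_j,i)}\ =\ G_{\bigcup_{i}B(x_j,i)}\ =\ G_T\ =\ \{\id\}, \]
because $\bigcup_i B(x_j,i)=T$. The numbers $\diam(H_{i,j})$ are non-increasing, hence converge to some $\delta\ge 0$; if $\delta>0$ I would, for each $i$, pick $a_i,b_i\in H_{i,j}$ with $d(a_i,b_i)\ge\delta/2$, extract a subsequence converging inside the compact set $H_{1,j}$ to a pair of points $a,b$, observe that $a$ and $b$ lie in every $H_{i,j}$ (each is closed and contains a tail of the subsequence), hence $a=b=\id$, so $d(a_i,b_i)\to 0$, contradicting $d(a_i,b_i)\ge\delta/2$. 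Therefore $\delta=0$.

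I do not anticipate a genuine obstacle: the whole argument is soft, using only the compact-open subgroup structure of $G_\xi^0$ together with two properties of the metric fixed just before the lemma, namely left-invariance (so all cosets of $H_{i,j}$ share the diameter of $H_{i,j}$) and compatibility (so that the topological notions of closedness and convergence used in the last paragraph are the metric ones). Keeping this bookkeeping with the metric straight is the only point needing a moment's attention; everything else is standard locally-compact-group manipulation.
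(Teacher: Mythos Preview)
Your proof is correct and follows the same approach as the paper: both identify $P_{i,j}$ as the coset partition by the compact open subgroup $G_{B(x_j,i),\xi}$, use left-invariance of the metric for boundedness and equal diameters, open-in-compact for finite index, and the fact that the $H_{i,j}$'s shrink to $\{\id\}$ for the diameter claim. The paper's proof is terser (it simply notes these subgroups form a basis of compact open neighborhoods of identity and invokes compatibility and left-invariance of the metric), whereas you spell out the compactness argument for $\diam(H_{i,j})\to 0$ explicitly; but the ideas are the same.
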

\begin{proof}
Note that $G_{B(x_j,i),\xi}$'s constitute a basis of compact open neighborhoods of identity in $G^0_\xi$ and each $P_{i,j}$ is a collection of cosets of $G_{B(x_j,i),\xi}$. Therefore, $P_{i,j}$'s are Borel partitions. Each member of $P_{i,j}$ is bounded since the metric is left-invariant, proper and compatible with the topology of $G^0_\xi$. The third claim follows since $G_{B(x_j,i+1),\xi}$ is a finite index subgroup of $G_{B(x_j,i),\xi}$. Finally, the last claim follows directly by compatibility and left-invariance of the metric on $G^0_\xi$.
\end{proof}

\begin{prop}\label{prop.differentiation_of_measures}
Let $\Phi: G_\xi^0 \to G_\xi^0$ be a measurable map such that the pushforward measure $\Phi_*m_{G_\xi^0}$ is absolutely continuous with respect to $m_{G_\xi^0}$.  Then, for each $j \geq 1$, and for $m_{G_\xi^0}$-almost every $g\in G^0_\xi$, we have
\[ \lim_{r\to \infty} \frac{m_{G_\xi^0}(\Phi^{-1}(gG_{B(x_j,r),\xi}))}{m_{G_\xi^0}(gG_{B(x_j,r),\xi})} =\frac{d\Phi_*m_{G_\xi^0}}{dm_{G_\xi^0}} (g). \]
\end{prop}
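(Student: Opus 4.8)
The plan is to deduce this from the Besicovitch–Federer covering/differentiation theorem, as stated in Federer's treatise. The setup has been arranged precisely so that the metric on $G_\xi^0$ from Haagerup–Przybyszewska, together with the nested sequences of partitions $P_{i,j}$ from the preceding lemma, gives a so-called $V$-theory (Vitali relation) for differentiation. Concretely, I would first recall Federer's framework: given a measure space with a family of sets which form a ``fine covering'' (here the cosets $gG_{B(x_j,r),\xi}$ for $r\to\infty$ around each point $g$) satisfying the covering property and having diameters shrinking to zero, one obtains that for any pair of measures $\nu \ll m$, the symmetric derivative $\lim_{r} \nu(gG_{B(x_j,r),\xi})/m(gG_{B(x_j,r),\xi})$ exists $m$-a.e.\ and equals $d\nu/dm$. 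The point is that the partitions $P_{i,j}$, being nested (every member of $P_{i,j}$ is a finite union of members of $P_{i+1,j}$) with diameters tending to $0$, form a Vitali relation with respect to $m_{G_\xi^0}$ — this is the classical fact that nested measurable partitions with vanishing mesh differentiate absolutely continuous measures (a ``martingale-type'' differentiation theorem, which does not even require the metric, only the $\sigma$-algebra structure and the Radon–Nikodym theorem).

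Second, I would apply this with $\nu = \Phi_* m_{G_\xi^0}$, which by hypothesis is absolutely continuous with respect to $m_{G_\xi^0}$, with Radon–Nikodym derivative $\frac{d\Phi_* m_{G_\xi^0}}{dm_{G_\xi^0}}$. The differentiation theorem then gives, for $m_{G_\xi^0}$-a.e.\ $g$,
\[
\lim_{r\to\infty} \frac{\Phi_* m_{G_\xi^0}(gG_{B(x_j,r),\xi})}{m_{G_\xi^0}(gG_{B(x_j,r),\xi})} = \frac{d\Phi_* m_{G_\xi^0}}{dm_{G_\xi^0}}(g).
\]
Third, I would simply unwind the definition of the pushforward: $\Phi_* m_{G_\xi^0}(gG_{B(x_j,r),\xi}) = m_{G_\xi^0}(\Phi^{-1}(gG_{B(x_j,r),\xi}))$, which turns the left-hand side into exactly the ratio in the statement. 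Since $j\geq 1$ is fixed and the a.e.\ statement for each $j$ is all that is claimed, one can intersect over the countably many $j$ if a uniform statement were wanted, but as phrased one argument per $j$ suffices.

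The main subtlety — really the only thing to check carefully rather than quote — is that the sequence of partitions $\{P_{i,j}\}_i$ genuinely satisfies the hypotheses of whichever differentiation theorem I invoke. Using the martingale/nested-partition version this is immediate from the lemma just proved (Borel partitions, nested with finite refinement, $\limsup_i \sup_{S\in P_{i,j}}\diam(S)=0$), together with the fact that $G_{B(x_j,i),\xi}$ form a neighbourhood basis at the identity so that the cosets $gG_{B(x_j,r),\xi}$ appearing in the limit are cofinally among the atoms containing $g$; hence the limit along $r\in\mathbb{Z}_{\geq1}$ coincides with the martingale limit along the filtration generated by $P_{i,j}$. Alternatively, invoking Federer's Besicovitch-type covering theorem requires the metric to have a bounded ``Besicovitch constant,'' which for these ultrametric-like nested structures is automatic since any two cosets of the same $G_{B(x_j,r),\xi}$ are either equal or disjoint; I expect the nested-partition route to be the cleanest, so I would present that, citing \cite[Sections 2.8--2.9]{federer} for the general differentiation statement and noting that the preceding lemma supplies precisely the required Vitali/martingale structure.
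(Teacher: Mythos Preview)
Your proposal is correct and follows essentially the same route as the paper: the preceding lemma is invoked to verify that the nested partitions $P_{i,j}$ constitute a Vitali relation in Federer's sense (the paper cites \cite[Theorem 2.8.19]{federer} for this), and then Federer's differentiation theorem \cite[Theorem 2.9.7]{federer} is applied to $\nu=\Phi_*m_{G_\xi^0}$, after which unwinding the pushforward gives the stated ratio. Your aside about the martingale/nested-partition route is a valid alternative, but the paper stays entirely within Federer's Vitali-relation framework.
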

\begin{proof}
By \cite[Theorem 2.8.19]{federer}, $V_j=\{(x,S)  \; \vert \; x\in S\in P_{i,j} \text{ for some }i\in \bbN\}$ is a Vitali relation (see \cite[Section 2.8.16]{federer} for definitions). Then the conclusion of the proposition is just a reformulation in our setting of \cite[Theorem 2.9.7 and Section 2.9]{federer}.
\end{proof}

\section{Unique ergodicity of compact quotients}\label{sec.unique.ergo}

The goal of this section is to prove Theorem \ref{thm.unique.ergo}. It will follow from the following special case.

\begin{proposition}\label{prop.hyp.unique.ergo}
Keep the hypotheses and the notation of Theorem \ref{thm.unique.ergo}. Suppose furthermore that $\Gamma$ is  torsion-free. Then the $G^0_\xi$-action on $X$ is uniquely ergodic. 
\end{proposition}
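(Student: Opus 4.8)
The plan is to prove the stronger assertion that every $G^0_\xi$-invariant Borel probability measure $\eta$ on $X$ coincides with the $G$-invariant probability measure $m_X$ (which exists because $\Gamma$ is a lattice). Since $G^0_\xi$ is an increasing union of compact groups, hence amenable, and $X$ is compact, such an $\eta$ exists, so this yields unique ergodicity. Fix $\phi\in C(X)$, put $H_0:=G_{[x_0,\xi)}$ and $\tilde H_n:=a^nH_0a^{-n}=G_{[x_{2n},\xi)}$ for $n\geq 1$, a compact open subgroup of $G^0_\xi$, and set
\[
A_n\phi(z):=\frac{1}{m_{G^0_\xi}(\tilde H_n)}\int_{\tilde H_n}\phi(gz)\,dm_{G^0_\xi}(g),
\]
a continuous function on $X$. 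Because $\tilde H_n\subseteq G^0_\xi$ and $\eta$ is $G^0_\xi$-invariant, Fubini gives $\int_X A_n\phi\,d\eta=\eta(\phi)$ for every $n$; hence it suffices to prove the equidistribution
\[
A_n\phi\ \longrightarrow\ m_X(\phi)\quad\text{uniformly on }X\qquad(\ast)
\]
as $n\to\infty$, for then $\eta(\phi)=\lim_n\int_X A_n\phi\,d\eta=m_X(\phi)$ for all $\phi$, i.e. $\eta=m_X$.

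The proof of $(\ast)$ is an orbit-thickening argument of Margulis type whose essential ingredient is the Howe--Moore property of $G$ (\cite{BM00a}): the transformation $a$ is mixing on $(X,m_X)$, so $\int_X\phi(a^nw)\psi(w)\,dm_X(w)\to m_X(\phi)\,m_X(\psi)$ for $\psi\in L^2(X)$; since each $a^n$ acts unitarily on $L^2(X)$, the functionals $\psi\mapsto\int_X\phi(a^nw)\psi(w)\,dm_X(w)$ are uniformly Lipschitz, so this convergence is uniform over $\psi$ in any compact subset of $L^2(X)$. We feed it the following test functions. By Tits' independence property the edge stabilizer $G_{[x_0,x_1]}$ — a compact open neighbourhood of the identity — factors as $\Theta\cdot H_0'$ with $\Theta\subseteq U^-\subseteq G^0_{\xi_-}$ (one may take $\Theta=G_{T_{[x_1,x_0]}}$) and $H_0'=G_{T_{[x_0,x_1]}}\subseteq H_0$; combined with Corollary \ref{lemma.Haar.VAU} this provides, for every $z\in X$, a box chart $\Theta\times H_0\to X$, $(v,h)\mapsto vhz$, which is injective — its image $\Theta H_0z$ meets each $\Gamma$-orbit at most once, since any nontrivial element of a conjugate of the torsion-free group $\Gamma$ is hyperbolic, hence moves $x_0$, whereas $\Theta H_0$ fixes $x_0$ — and under which $m_X$ pulls back to a constant $c>0$ times $m_{G^0_{\xi_-}}\otimes m_{G^0_\xi}$; compactness of $X$ lets one choose $\Theta$ and $c$ independently of $z$. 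Put $\psi_z:=m_X(\Theta H_0z)^{-1}\mathbf 1_{\Theta H_0z}$: these are probability densities, $z\mapsto\psi_z$ is continuous into $L^2(X)$, so $\{\psi_z:z\in X\}$ is compact and the mixing is uniform over it.

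It remains to compute the correlations. Writing out $\int_X\phi(a^nw)\psi_z(w)\,dm_X(w)$ in the box chart one gets
\[
\int_X\phi(a^nw)\psi_z(w)\,dm_X(w)=\frac{1}{m_{G^0_{\xi_-}}(\Theta)\,m_{G^0_\xi}(H_0)}\int_{\Theta}\int_{H_0}\phi\big(a^nvhz\big)\,dm_{G^0_\xi}(h)\,dm_{G^0_{\xi_-}}(v).
\]
Decompose $a^nvhz=(a^nva^{-n})(a^nha^{-n})(a^nz)$. Since $v\in\Theta\subseteq U^-$ one has $a^nva^{-n}\to e$ uniformly for $v\in\Theta$ (indeed $a^n\Theta a^{-n}\subseteq G_{B(x_0,2n)}$), so, $X$ being compact, left translation by $a^nva^{-n}$ displaces points of $X$ by a uniformly small amount; hence $\phi(a^nvhz)=\phi\big((a^nha^{-n})(a^nz)\big)+o(1)$ uniformly in $v,h,z$. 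Substituting $g=a^nha^{-n}$, which runs over $\tilde H_n$, the inner integral collapses to $m_{G^0_\xi}(H_0)\,A_n\phi(a^nz)$, independently of $v$; here it matters that the averaging groups $G_{[x_k,\xi)}$ are conjugates of one another under $a$ and are $M$-invariant, the latter being precisely the point that replaces the classical hypothesis ``$a$ contracts compact subsets of the unipotent to the identity'', which fails here since $a$-conjugation contracts $G^0_\xi$ only to the compact subgroup $M\subseteq G_{[x_k,\xi)}$. We conclude $\int_X\phi(a^nw)\psi_z(w)\,dm_X(w)=A_n\phi(a^nz)+o(1)$ uniformly in $z$, and combining this with the uniform mixing yields $A_n\phi(a^nz)\to m_X(\phi)$ uniformly in $z$; since $a^n$ is a homeomorphism of $X$ this is exactly $(\ast)$.

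The delicate part is the middle step: producing the box charts with uniform-in-$z$ injectivity and measure normalization, and handling the transverse contraction. This is where the hypotheses genuinely enter — Tits' independence property for the local product decomposition of $G$ in the $G^0_{\xi_-}$- and $G^0_\xi$-directions, compactness of $\Gamma\backslash T$ (uniformity of $\Gamma$) for the uniform lower bound on injectivity radius, and torsion-freeness of $\Gamma$ so that it acts freely on $T$ — and it is the point at which one works around the fact that $a$-conjugation shrinks $G^0_{\xi_-}$ only up to $M$ rather than to the identity, using the $U^-$-part of that direction together with the $M$-invariant averaging subgroups $G_{[x_k,\xi)}$.
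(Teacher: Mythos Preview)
Your argument follows the same Margulis orbit-thickening scheme as the paper's proof, and the overall logic is sound. The one genuine difference is in how you handle the fact that conjugation by $a$ contracts $G^0_{\xi_-}$ only to $M$, not to the identity: the paper deals with this by first replacing $\theta$ by its $M$-average $\tilde\theta$ and using the compact open group $G_{[x_{-2k},x_{2k}]}=G_{[x_{2k},\xi_-)}G_{[x_{-2k},\xi)}$ as the thickening box, whereas you sidestep the issue by taking the transverse piece $\Theta=G_{T_{[x_1,x_0]}}$ inside $U^-$, so that $a^n\Theta a^{-n}\to\{\id\}$ genuinely. That is a legitimate and slightly slicker alternative; your parenthetical remark about needing the $M$-invariance of the averaging groups $G_{[x_k,\xi)}$ is therefore out of place in your own argument (it is the paper's mechanism, not yours).

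There is, however, a real gap in your box-chart step. You assert that $(v,h)\mapsto vhz$ is injective on $\Theta\times H_0$; your justification only shows that $g\mapsto gz$ is injective on the set $\Theta H_0\subseteq G_{x_0}$ (this is where torsion-freeness is used, and it is correct), but the multiplication map $\Theta\times H_0\to\Theta H_0$ itself is not injective: $\Theta\cap H_0=G_{T_{[x_1,x_0]}\cup[x_0,\xi)}$ is nontrivial, since an element of it is free to act on the branches hanging off $x_j$ for $j\geq 2$. The displayed integral identity you need is nevertheless true, because the fibres of $\Theta\times H_0\to\Theta H_0$ are all translates of the compact group $\Theta\cap H_0$ and hence have constant measure, so the pushforward of $m_{G^0_{\xi_-}}|_\Theta\otimes m_{G^0_\xi}|_{H_0}$ is a fixed multiple of $m_G|_{\Theta H_0}$ and the normalizing constants cancel. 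Alternatively, you can keep injectivity by thickening with $\Theta\times H_0'$ (where $H_0'=G_{T_{[x_0,x_1]}}$, so $\Theta H_0'=G_{[x_0,x_1]}$ and $\Theta\cap H_0'=\{\id\}$) and then observe that the resulting average over $a^nH_0'a^{-n}$ differs from $A_n\phi$ only by an additional $M$-average, which is harmless after integrating against the $G^0_\xi$-invariant $\eta$. Either fix is routine; once made, your proof is complete and essentially equivalent to the paper's.
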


\begin{proof}
Since $G_{[x_0,\xi)}$ is a compact, left-$M$-invariant set of positive measure, by Lemma \ref{lem::general_Folner}  $G_{[x_{2n},\xi)}=a^n G_{[x_0,\xi)} a^{-n}$ is a tempered F{\o}lner sequence for $G^0_{\xi}$. Our goal is to show that for every $\theta \in C(X)$, we have the following convergence uniformly in $x \in X$:
\begin{equation}\label{eq.unique.ergo0}
\frac{1}{m_{G^0_{\xi}(G_{[x_{2n},\xi)})}} \int_{G_{[x_{2n},\xi)}} \theta(u x) dm_{G^0_{\xi}}(u) \underset{n \to \infty}{\longrightarrow} \int_X \theta(y)dm_X(y).
\end{equation}

Let $\varepsilon >0$, $\theta \in C(X)$ and $x \in X$. We define $\tilde{\theta} : X \to \bbR$ by 
\[ \tilde{\theta}(z):=\int_{M} \theta(m z)dm_M(m),\] where $m_M$ is the Haar measure of the compact group $M= G^0_\xi \cap G^0_{\xi_-}$ with normalization $m_M(M)=1$. The map $\tilde{\theta}$ is $M$-invariant and, by compactness of $X$, is uniformly continuous. 

Now we argue that there exists $k>0$ such that for every $w \in G_{[x_{-2k},x_{2k}]}$ and $z \in X$:
\begin{equation}\label{eq.unique.ergo1}
|\tilde{\theta}(w z)-\tilde{\theta}(z)|<\varepsilon.
\end{equation}
Indeed, since $\tilde{\theta}$ is uniformly continuous and $G$ acts continuously on $X$, inequality \eqref{eq.unique.ergo1} is true for every $w$ belonging to a sufficiently small neighborhood $O$ in $G$ of the identity element. By Lemma \ref{lem::some_little_facts}.(3) below, for sufficiently large $k$, for every $w \in G_{[x_{-2k},x_{2k}]}$ we can find $m \in M$ such that $mw \in O$. Consequently, since $\tilde{\theta}$ is $M$-invariant, $(\ref{eq.unique.ergo1})$ holds for each $ w \in G_{[x_{-2k},x_{2k}]}$. 

Fix $k >0 $ such that \eqref{eq.unique.ergo1} holds. For $z,y \in X$ we set $$\phi_z(y):=\frac{1}{m_X(G_{[x_{-2k},x_{2k}]} z)}\mathbbm{1}_{G_{[x_{-2k},x_{2k}]} z}(y).$$  The map $z \mapsto \phi_z$ is locally constant as a function  $X \to L^2(X,m_X)$, because $G_{[x_{-2k},x_{2k}]} z$ is compact and open in $X$. In particular, $z \mapsto \phi_z$ is continuous, and $\{\phi_z\}_{z\in X}$ is a compact subset of $L^2(X,m_X)$.

Since $G$ has the Howe--Moore property (see \cites{BM00a,Cio}), the corresponding action of $a$ on $L^2(X,m_{X})$ is  mixing, therefore we have
\begin{equation}\label{eq.unique.ergo22}
\int_X \phi_z(a^{-\ell} y) \tilde{\theta}(y)dm_X(y) \underset{\ell \to \infty}{\longrightarrow} \int_X \phi_z(y)dm_X(y)   \int_X \tilde{\theta}(y)dm_X(y).
\end{equation}
Moreover, since $\{\phi_z \}_{z\in X}$ is compact, this convergence is uniform in $z \in X$.
Hence we can choose $\ell$ large enough so that the right-hand side of $(\ref{eq.unique.ergo22})$ is within $\varepsilon$ of the left-hand side for every $z \in X$. Now we treat separately the right and left-hand sides of \eqref{eq.unique.ergo22} ultimately relating them to the right and left-hand sides of \eqref{eq.unique.ergo0}.

\textit{Right-hand side of \eqref{eq.unique.ergo22}}: By construction $\int_X \phi_z(y)dm_X(y)=1$, for every $z \in X$. Since $m_X$ is $G$-invariant, by Fubini's theorem, we have
\begin{equation}\label{eq.unique.ergo3}
\int_X \tilde{\theta}(y) dm_X(y)=\int_X \theta(y) dm_X(y).
\end{equation}


\textit{Left-hand side of \eqref{eq.unique.ergo22}}:
We have
\begin{equation*}
\begin{split}
\int_X \phi_z(a^{-\ell} y) \tilde{\theta}(y)dm_X(y)&=\int_X \phi_z(y) \tilde{\theta}(a^\ell y) dm_X(y)\\
&= \frac{1}{m_X(G_{[x_{-2k},x_{2k}]} z)} \int_{G_{[x_{-2k},x_{2k}]} z} \tilde{\theta}(a^\ell y) dm_X(y)\\
\end{split}
\end{equation*}
\begin{equation}
\label{eq.unique.ergo5}
\begin{split}
&=\frac{1}{m_G(G_{[x_{-2k},x_{2k}]})} \int_{G_{[x_{-2k},x_{2k}]}} \tilde{\theta}(a^\ell g z) dm_G(g)\\
&= (d_0-1)^{2k}(d_1-1)^{2k} \int_{a^{\ell} G_{[x_{-2k},x_{2k}]} a^{-\ell}} \tilde{\theta}(g a^\ell z)dm_G(g)\\
&= (d_0-1)^{2k}(d_1-1)^{2k} \int_{G_{[x_{-2(k-\ell)},\xi)}} \int_{G_{[x_{2(k+\ell)},\xi_-)}} \tilde{\theta}(vua^\ell z) dm_{G^0_{\xi_-}}(v) dm_{G^0_{\xi}}(u).\\
\end{split}
\end{equation}

The first equality is by $G$-invariance of $m_X$ and the second one by definition of $\phi_z$. The third one follows from injectivity of the maps $w \mapsto w z$ on $G_{[x_{-2k},x_{2k}]}$ for every $z \in X$ (see Lemma \ref{lem::some_little_facts} (2)), and thus we can lift this integral from $X$ to $G$. The fourth one follows by the unimodularity of $G$, 
and the fact that $m_G(G_{[x_{-2k},x_{2k}]})=(d_0-1)^{-2k}(d_1-1)^{-2k}$ (which is readily verified using (1) of Lemma \ref{lem::some_little_facts} and the product structure). The last equality in $(\ref{eq.unique.ergo5})$ follows from Lemma \ref{lemma.Haar.VAU}, since by Lemma \ref{lem::some_little_facts} (1), we have
$$a^{\ell} G_{[x_{-2k},x_{2k}]} a^{-\ell}=a^\ell  G_{[x_{2k},\xi_-)}a^{-\ell} a^\ell  G_{[x_{-2k},\xi)} a^{-\ell}=G_{[x_{2(k+\ell)},\xi_-)} G_{[x_{-2(k-\ell)},\xi)}.$$

As $G_{[x_{2(k+\ell)},\xi_-)}\subset G_{[x_{-2k},x_{2k}]}$,  $(\ref{eq.unique.ergo1})$ implies that for every $z \in X$ and for every $v\in G_{[x_{2(k+\ell)},\xi_-)}$, we have $|\tilde{\theta}(vua^\ell z)-\tilde{\theta}(u a^\ell z)|<\varepsilon$. Since $m_{G^0_{\xi_-}}(G_{[x_{2(k+\ell)},\xi_-)})=(d_0-1)^{-k-\ell}(d_1-1)^{-k-\ell}$, it follows that for every $z\in X$, the last quantity in $(\ref{eq.unique.ergo5})$ is $\varepsilon$-close to
\begin{equation*}
\begin{split}
& (d_0-1)^{k-\ell}(d_1-1)^{k-\ell}   \int_{G_{[x_{2(\ell-k)},\xi)}} \tilde{\theta}(ua^\ell z)dm_{G^0_{\xi}}(u)\\=
&\frac{1}{m_{G^0_{\xi}}(G_{[x_{2(\ell-k)},\xi)})}   \int_{G_{[x_{2(\ell-k)},\xi)}} \theta(ua^\ell z)dm_{G^0_{\xi}}(u).\\
\end{split}
\end{equation*}
The last equality follows from the definition of $\tilde{\theta}$ and Fubini's theorem. 

Choosing $z=a^{-\ell}x$, we have that the last expression is equal to the left-hand side of $\eqref{eq.unique.ergo0}$ with $n=l-k$, and it is within $2\varepsilon$ of the right-hand side of $\eqref{eq.unique.ergo0}$, concluding the proof.
\end{proof}

In the proof of Proposition \ref{prop.hyp.unique.ergo} we have used the following lemma.

\begin{lemma}
\label{lem::some_little_facts}
Let $k \geq 1$. Under the hypotheses of Theorem \ref{thm.unique.ergo}, we have:
\begin{enumerate}
    \item $G_{[x_{-2k},x_{2k}]}=G_{[ x_{2k},\xi_-)}   G_{[x_{-2k},\xi)}$,
    \item If $\Gamma$ is torsion-free, then for every $x \in X=G/\Gamma$ the map $w \mapsto w x$ is injective on $G_{[x_{-2k},x_{2k}]}$,
    \item for each $w \in G_{[x_{-2k},x_{2k}]}$, there exists $m \in M$ such that $m w \in G_{B(x_0, 2k-1)}$.
\end{enumerate}
\end{lemma}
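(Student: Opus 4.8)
All three parts are geometric consequences of Tits' independence property of $G$, and my plan is to derive them from Amann's reformulation (\S\ref{subsub.Tits.indep}) applied to the geodesic segment $S=[x_{-2k},x_{2k}]$, which has at least one edge since $k\geq 1$. That result gives a decomposition $G_S=\prod_{j=-2k}^{2k}G_{S_{x_j}}$ into pairwise commuting factors, where $S_{x_j}=T_{[x_j,x_{j-1}]}\cup T_{[x_j,x_{j+1}]}$ for the interior vertices $-2k<j<2k$, while $S_{x_{2k}}=T_{[x_{2k},x_{2k-1}]}$ and $S_{x_{-2k}}=T_{[x_{-2k},x_{-2k+1}]}$ at the two endpoints. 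For (1), I would observe that each interior $S_{x_j}$ contains the whole line $(\xi_-,\xi)$, so $G_{S_{x_j}}\leq M\leq G_{[x_{-2k},\xi)}$; that $S_{x_{-2k}}\supseteq[x_{-2k},\xi)$, so $G_{S_{x_{-2k}}}\leq G_{[x_{-2k},\xi)}$; and that $S_{x_{2k}}\supseteq[x_{2k},\xi_-)$, so $G_{S_{x_{2k}}}\leq G_{[x_{2k},\xi_-)}$. Writing any $g\in G_S$ as a product of the $G_{S_{x_j}}$-factors and using commutativity to move the $G_{S_{x_{2k}}}$-factor to the front then yields $G_{[x_{-2k},x_{2k}]}\subseteq G_{[x_{2k},\xi_-)}G_{[x_{-2k},\xi)}$; the reverse inclusion is immediate since $[x_{-2k},x_{2k}]$ lies in both rays $[x_{2k},\xi_-)$ and $[x_{-2k},\xi)$.

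For (2), I would write $x=g\Gamma$ and suppose $w_1x=w_2x$ with $w_1,w_2\in K:=G_{[x_{-2k},x_{2k}]}$; then $g^{-1}w_2^{-1}w_1g$ lies in $\Gamma\cap g^{-1}Kg$. Since $K$ is a compact subgroup of $G$, so is $g^{-1}Kg$, hence $\Gamma\cap g^{-1}Kg$ is a discrete, therefore finite, subgroup of a compact group; being also a subgroup of the torsion-free group $\Gamma$, it must be trivial, and so $w_1=w_2$.

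For (3), I would reuse the decomposition $w=\prod_{j=-2k}^{2k}w_j$ of $w\in K$ from part (1) and set $m_0:=\prod_{-2k<j<2k}w_j$; as the interior factors fix $(\xi_-,\xi)$ pointwise, $m_0\in M$. The key observation is that $B(x_0,2k-1)$ contains neither $x_{2k}$ nor $x_{-2k}$ and is connected through $x_0$, so it lies inside both $T_{[x_{2k},x_{2k-1}]}$ and $T_{[x_{-2k},x_{-2k+1}]}$; hence the endpoint factors $w_{2k}$ and $w_{-2k}$ fix $B(x_0,2k-1)$ pointwise, $w$ and $m_0$ agree on $B(x_0,2k-1)$, and therefore $m:=m_0^{-1}\in M$ satisfies $mw\in G_{B(x_0,2k-1)}$. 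The one genuinely delicate point in (1) and (3) is the commutation of the Amann factors --- true because distinct $G_{S_{x_j}}$ act non-trivially only on disjoint families of branches hanging off $S$ --- which is what allows a chosen factor to be isolated on one side of the product; everything else is bookkeeping about which rays and balls sit inside which maximal half-tree $T_{[\cdot,\cdot]}$.
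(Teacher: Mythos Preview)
Your proof is correct, and for part (2) it is essentially identical to the paper's. The approaches diverge in (1) and, as a consequence, in (3).

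For (1), the paper proves the nontrivial inclusion directly from $2$-transitivity of $G$ on $\partial T$: given $g\in G_{[x_{-2k},x_{2k}]}$, one finds $v\in G_{[x_{2k},\xi_-)}$ with $v\xi=g\xi$ (both $\xi$ and $g\xi$ lie in the same $G_{[x_{2k},\xi_-)}$-orbit on $\partial T$), and then checks $v^{-1}g\in G_{[x_{-2k},\xi)}$. You instead invoke Tits' independence via Amann's decomposition of $G_S$, which is a somewhat heavier hypothesis for (1) alone, but has the advantage of immediately giving the fine product structure you then recycle in (3). The paper's route to (3) proceeds differently: it first uses (1) to write $w=w_1w_2$ with $w_1\in G_{[x_{2k},\xi_-)}$ and $w_2\in G_{[x_{-2k},\xi)}$, then applies Tits' independence separately at each endpoint to peel an $M$-factor off $w_1$ and $w_2$, and finally reassembles. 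Your version is cleaner once the Amann decomposition is in hand, since the interior factors already lie in $M$ and the two endpoint factors visibly fix $B(x_0,2k-1)$.

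The commutation of the Amann factors, which you rightly single out as the one delicate step, does hold: for $j\neq j'$ the sets $T\setminus S_{x_j}$ and $T\setminus S_{x_{j'}}$ are disjoint and each $G_{S_{x_{j'}}}$ fixes $T\setminus S_{x_j}$ pointwise (being contained in $S_{x_{j'}}$), so the element-wise commutator is trivial. It would be worth spelling this out in one line rather than leaving it as a parenthetical.
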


\begin{proof}
In $(1)$, the inclusion $\supseteq$ is clear, the other inclusion follows by 2-transitivity. 
For (2), suppose that for some $x \in X$, $x=g\Gamma$, the map is not injective. Then, there exists $w \neq \id \in G_{[x_{-2k},x_{2k}]}$ such that the non-trivial elliptic element $g^{-1}wg$ belongs to $\Gamma$. But since $\Gamma$ is torsion-free, all its non-trivial elements are hyperbolic, yielding a contradiction.


Finally, let us prove (3). Let $w \in G_{[x_{-2k},x_{2k}]}$. By (1), write $w= w_1 w_2$, with $w_1 \in G_{[ x_{2k},\xi_-)}$, and $w_2 \in G_{[x_{-2k},\xi)}$. By Tits' independence property, we can write $w_1=m_1w'_1$, with $w'_1$ fixing the half tree $T_{[x_{2k},x_{2k-1}]}$ and $m_1$ fixing the half tree $T_{[x_{2k-1},x_{2k}]}$. In particular, $m_1 \in M$ and $w_1' \in G_{B(x_0,2k-1)}$. Similarly one can write $w_2=m_2w'_2$ with $m_2 \in M$ and $w_2' \in G_{B(x_0,2k-1)}$. Moreover, $m_2^{-1}w_1'm_2 \in G_{B(x_0,2k-1)}$. Now one can write $w=m_1m_2 (m_2^{-1}w_1'm_2)w_2'$, and the claim follows with $m=m_2^{-1}m_1^{-1}$.
\end{proof}
 
We are now in a position to prove Theorem \ref{thm.unique.ergo}, using Proposition \ref{prop.hyp.unique.ergo}:

\begin{proof}[Proof of Theorem \ref{thm.unique.ergo}]
Since $\Gamma$ is a uniform lattice in $G$, by \cite[Prop 7.9.(a)]{Bass} it is finitely generated and by \cite[Theorem 8.3.(c)]{Bass}, it contains a finite index free subgroup $\Gamma_0$, in particular, $\Gamma_0$ is torsion-free. Let $X_0$ be the compact quotient $G/\Gamma_0$. The natural projection $X_0 \to X$ given by $g\Gamma_0 \mapsto g \Gamma$ is $G$-equivariant. Therefore, $X$ is a factor of $X_0$.
By Proposition \ref{prop.hyp.unique.ergo}, $G^0_\xi$-action on $X_0$ is uniquely ergodic. Since $X$ is a factor of $X_0$, by a standard argument, $G^0_\xi$-action on $X$ is also uniquely ergodic and the theorem is proven. 
\end{proof}


\section{Proof of Theorem \ref{thm.measure.classification}}\label{sec.main.proof}


\subsection{Before the divergence}\label{subsec.before.divergence}

Let $\mu$ be a $G^0_{\xi}$-invariant and ergodic Borel probability measure on $X$. 

\subsubsection{$G_{\xi}^0$-homogeneous case}
\label{sec::homogeneous_case}

First, suppose that there exists $x \in X$ such that $\mu(G_{\xi}^0 x)>0$. By $G_{\xi}^0$-ergodicity of $\mu$, we deduce $\mu(G_{\xi}^0 x)=1$. Let $x=g\Gamma$. The $G_{\xi}^0$-equivariant Borel isomorphism $$G_{\xi}^0 /(g\Gamma g^{-1} \cap G_{\xi}^0) \cong G_{\xi}^0 x \subset X$$ induces a $G_{\xi}^0$-invariant probability measure on $G_{\xi}^0 /(g\Gamma g^{-1} \cap G_{\xi}^0)$. In particular, $g\Gamma g^{-1} \cap G_{\xi}^0$ is a lattice in $G_{\xi}^0$, and therefore $G_{\xi}^0 x$ is closed in $X$ (see e.g. \cite[Theorem 1.13]{raghunathan}). As a conclusion, $\mu$ is the  $G_{\xi}^0$-homogeneous probability measure on $X$ supported on the closed orbit $G_{\xi}^0 x$.

\subsubsection{A shrinking sequence of differences}
In view of the previous observation, we suppose for the rest of the section that for every $x \in X$ we have $\mu(G_{\xi}^0 x)=0$. With this assumption and using Lemma \ref{lemma.unif.gen}, the next lemma will allow us to find arbitrarily close pairs of points in a set of uniformly generic points differing by an element outside of $G_{\xi}$.

\begin{lemma}\label{lemma.elliptics.to.identity}
Let $Y$ be a Borel subset of $X$ such that $\mu(Y)>0$. Then there exist $y \in Y$ and a sequence $\epsilon_n \subset G \setminus G_{\xi}^0$ such that $\epsilon_n \xrightarrow{n \to \infty} \id$ and $\epsilon_n  y \in Y$ for every $n \in \mathbb{N}$.
\end{lemma}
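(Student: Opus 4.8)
The plan is to prove the contrapositive, using only the running assumption of this subsection that $\mu(G^0_\xi x)=0$ for every $x\in X$; no ergodicity or pointwise ergodic theorem is needed here. So suppose that for the given Borel set $Y$ with $\mu(Y)>0$ the conclusion fails. Fix a countable basis $V_1\supseteq V_2\supseteq\cdots$ of symmetric open neighbourhoods of $\id$ in $G$. For a point $y\in Y$, the absence of a sequence $\epsilon_n\to\id$ in $G\setminus G^0_\xi$ with $\epsilon_n y\in Y$ for all $n$ is precisely the statement that $(V_k\setminus G^0_\xi)y\cap Y=\varnothing$ for some $k$. Hence $Y=\bigcup_{k\ge 1}Y^{(k)}$, where $Y^{(k)}:=\{y\in Y:(V_k\setminus G^0_\xi)y\cap Y=\varnothing\}$. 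I would first check that each $Y^{(k)}$ is $\mu$-measurable: its complement in $Y$ is the projection to the $y$-coordinate of the Borel set $\{(g,y)\in(V_k\setminus G^0_\xi)\times Y:\ gy\in Y\}$, hence analytic, hence universally measurable by Suslin's theorem (compare the proof of Lemma~\ref{lemma.product.structure}). Therefore some $Y':=Y^{(k_0)}$ has $\mu(Y')>0$; set $V:=V_{k_0}$. By construction, for all $y\in Y'$ and all $g\in V$ one has $gy\in Y'\Rightarrow g\in G^0_\xi$.

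The second step is a localization. Since $G$, hence $X=G/\Gamma$, is second countable and the orbit maps $G\to X$ are open, every $x\in X$ has an open neighbourhood of the form $Wx$ with $W$ an open neighbourhood of $\id$ satisfying $WW^{-1}\subseteq V$; by the Lindel\"of property I can cover $X$ by countably many such sets $B_n=W_n x_n$ ($n\ge 1$). Fixing $n$ with $Y'\cap B_n\ne\varnothing$, I pick $q\in Y'\cap B_n$ and observe that any $p\in Y'\cap B_n$ can be written $p=w_p x_n$, $q=w_q x_n$ with $w_p,w_q\in W_n$, so $p=(w_p w_q^{-1})q$ with $w_p w_q^{-1}\in W_n W_n^{-1}\subseteq V$. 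Since $p,q\in Y'$, the defining property of $Y'$ forces $w_p w_q^{-1}\in G^0_\xi$, i.e.\ $p\in G^0_\xi q$; thus $Y'\cap B_n\subseteq G^0_\xi q$.

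Finally, the running assumption gives $\mu(G^0_\xi q)=0$, so $\mu(Y'\cap B_n)=0$ for every $n$, whence $\mu(Y')\le\sum_n\mu(Y'\cap B_n)=0$, contradicting $\mu(Y')>0$. This proves the lemma; moreover, since $G^0_\xi$ is open in $G_\xi$ (being the increasing union of the $G_{[x_k,\xi)}$), the resulting $\epsilon_n$ automatically lie outside $G_\xi$ for all large $n$, which is the form in which the lemma will be applied (to a $\mu$-uniformly generic set). I do not expect a serious obstacle here: the only bookkeeping points — measurability of the sets $Y^{(k)}$, and the availability at each point of neighbourhoods $Wx$ with $WW^{-1}\subseteq V$ — are routine, and the whole argument rests on the single conceptual fact that a $G^0_\xi$-orbit is $\mu$-null, so that failure of transversal recurrence near $Y$ would trap $Y$ locally inside one orbit.
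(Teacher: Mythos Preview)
Your argument is correct and follows the same underlying idea as the paper: if the conclusion fails, then locally $Y$ is trapped inside a single $G^0_\xi$-orbit, contradicting the standing hypothesis $\mu(G^0_\xi x)=0$.

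The difference is purely in packaging. The paper first uses inner regularity of $\mu$ to replace $Y$ by a compact subset of positive measure. Then, assuming the conclusion fails, each $y\in Y$ has an open neighbourhood $V$ of $\id$ with $Vy\cap Y\subseteq G^0_\xi y$; since $Vy$ is open in $X$, compactness yields a \emph{finite} cover $Y\subseteq\bigcup_{i=1}^n G^0_\xi y_i$, forcing $\mu(G^0_\xi y_i)>0$ for some $i$. This sidesteps both the measurability of your sets $Y^{(k)}$ and the Lindel\"of-type countable cover. Your route works without regularity and handles measurability via the analyticity of projections; it is slightly heavier but perfectly valid, and the measurability check you sketch (projection of a Borel set in a Polish product, hence analytic, hence universally measurable) is sound. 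Your closing remark that $\epsilon_n\notin G_\xi$ for large $n$ is also correct, since $G^0_\xi$ is open in $G_\xi$.
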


\begin{proof}
By regularity of $\mu$ and up to replacing $Y$ with a compact set $K \subseteq Y$ with $\mu(K)>0$, we can suppose without loss of generality that $Y$ is compact. 

For a contradiction, we assume the conclusion of the lemma is not true. Then every $y \in Y$ 
admits an open neighborhood $V \subset G$ of $\id$ such that $V y \cap Y \subseteq G_{\xi}^0 y$. 
As the projection $\pi_y:G \to X$ defined by $ g \mapsto \pi_y(g):=g y$ is an open map, $V y$ is open in $X$. By compactness of $Y$, there exist $y_1,\ldots,y_n \in Y$ such that $Y \subseteq \bigcup\limits_{i=1}^n G_{\xi}^0 y_i$. In particular, there exists $i \in \{1,\ldots,n\}$ with $\mu(G_{\xi}^0 y_i)>0$. This contradicts the hypothesis that $\mu(G_{\xi}^0 x)=0$ for every $x \in X$ and the lemma is proven.
\end{proof}

\subsubsection{A useful factorization and associated change of variables for time shift}
The following key technical proposition will allow us, with Lemma \ref{lemma.elliptics.to.identity}, to carry out the divergence argument with ``different speeds", similarly to the one used in the study of algebraic flows by Ratner \cite{ratner.unipotent} (see also Margulis-Tomanov \cite{margulis-tomanov}). Proposition \ref{prop.variable.change.RN} is proven in \S \ref{subsec::proof_change.RN}.
To state it, we describe a finite collection of tempered F{\o}lner sequences for $G_\xi^0$. It will become clear in the proof that this is a convenient choice. 

We choose a finite collection of elements $C_{\text{even}} \subset G_{[x_{-1},\xi)}$ such that for every $g\in G_{[x_{-1},\xi)}$ there exists $t\in C_{\text{even}}$ such that $g|_{B(x_{-1},2)}=t|_{B(x_{-1},2)}$. Similarly, choose a finite set $C_{\text{odd}} \subset G_{[x_0,\xi)}$ so that for every $g\in G_{[x_{0},\xi)}$ there exists $t\in C_{\text{odd}}$ such that $g|_{B(x_{0},2)}=t|_{B(x_{0},2)}$. Recall that $F_n = G_{[x_n,\xi)} \setminus G_{[x_{n-1},\xi)}$ (see Example \ref{ex.F_i}) and let $F_n^t : = a^n F_0 t a^{-n}$ for $t\in C_{\text{even}}$ and $F_n^{t'}:= a^n F_1 t' a^{-n}$ for $t'\in C_{\text{odd}}$. If $t,t'=\id_{G}$, these correspond to the sequences $F_{2n}$ and $F_{2n+1}$.

Denote by 
\begin{equation}\label{def.folner.collection}
    \mathcal{C}:= \left\{  \{F_n^t\}_{n\in \bbN} \; | \;    t\in C_{\text{even}}  \right\} \cup \left\{  \{F_n^{t'}\}_{n\in \bbN} \; | \;    t'\in C_{\text{odd}}  \right\}.
\end{equation}
Observe that for each $t\in C_{\text{even}}$ and $t'\in C_{\text{odd}}$ the sets $F_0 t$ and $F_1t'$ are compact, left-$M$-invariant and have positive $m_{G_\xi^0}$-measures. Hence, by Lemma \ref{lem::general_Folner}, all members in $\mathcal{C}$ are tempered Folner sequences for $G_\xi^0$. 

\begin{proposition}\label{prop.variable.change.RN}
Let $\mathcal{C}$ be the finite collection of tempered F{\o}lner sequences for $G^0_{\xi}$ as above. For any sequence of elements $\epsilon_n \in G \setminus G^0_{\xi}$ with $\epsilon_n \xrightarrow{n \to \infty} \id$, up to possibly reindexing $\epsilon_n$ and passing to subsequence, for every $n\in \bbN$ there exist a continuous map $\Psi_{\epsilon_n}: F_{n} \to G$
and a continuous injection $\Phi_{\epsilon_n}: F_{n} \to  G$ such that
 \begin{enumerate}
     \item \label{prop.variable.change.RN_1}
     $u\epsilon_{n}=\Psi_{\epsilon_{n}}(u)a \Phi_{\epsilon_{n}}(u)$, for each $u \in F_{n}$,
     \item \label{prop.variable.change.RN_2}
     for every neighborhood $O$ of identity in $G$, and for all $n$ large enough, we have $\Psi_{\epsilon_{n}}(F_{n}) \subseteq O$,
     \item \label{prop.variable.change.RN_3}
     $\Phi_{\epsilon_n}(F_n)$ is a subsequence of a tempered F{\o}lner sequence in $\mathcal{C}$,
     \item \label{prop.variable.change.RN_4}
     for each $n \in \bbN$, the Radon--Nikodym derivative induced by  $\Phi_{\epsilon_n}: F_{n} \to  \Phi_{\epsilon_n}(F_{n})$  satisfies $$\frac{d(\Phi_{\epsilon_n})_*m_{G^0_{\xi}}}{dm_{G^0_{\xi}}} \vert_{\phi_{\epsilon_n}(F_n)} \equiv
     \frac{m_{G^0_{\xi}}(F_{n})}{m_{G^0_{\xi}}(\Phi_{\epsilon_n}(F_{n}))}.$$ 
 \end{enumerate} 
\end{proposition}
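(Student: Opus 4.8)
The plan is to run a non-linear analogue of Ratner's transverse-divergence (``drift'') argument, in the form refined by Margulis--Tomanov, with the fixed hyperbolic element $a$ playing the role of the expanding torus element normalising $G^{0}_{\xi}$. \emph{Step 1 (reduction and normalisation of $\epsilon_n$).} Since $G$ acts without edge inversions and contains no hyperbolic element of translation length $1$, every hyperbolic element displaces each vertex by at least $2$; as $\epsilon_n\to\id$, for $n$ large $\epsilon_n$ is therefore elliptic, fixes $x_0$, and fixes $B(x_0,R_n)$ with $R_n\to\infty$, while — being outside $G^{0}_{\xi}$ — it fails to fix the entire ray $[x_0,\xi)$. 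Letting $s_n$ be maximal with $\epsilon_n$ fixing $[x_0,x_{s_n}]$, one has $s_n\to\infty$ and $\epsilon_n$ moves every $x_j$ with $j>s_n$. Conjugating by $a^{-m_n}$ for a suitable $m_n\to\infty$ (comparable to $s_n/2$) recentres the ``branching'' of $\epsilon_n$ near $x_0$: the conjugates $\tilde\epsilon_n:=a^{-m_n}\epsilon_n a^{m_n}$ all lie in the compact group $G_{x_0}$, so after passing to a subsequence they converge to some $\tilde\epsilon$ fixing the full ray $[x_0,\xi_-)$ yet moving all far enough $x_j$ on the $\xi$-side, whence $\tilde\epsilon\in G^{0}_{\xi_-}\setminus G^{0}_{\xi}$. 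Reindexing (and thinning further so the ``local type'' of $\tilde\epsilon_n$ near its branch vertex is constant), I pair $\epsilon_n$ with $F_n$; writing $u\in F_n$ as $u=a^{m_n}u_0a^{-m_n}$ with $u_0$ ranging over a fixed $F_r$ (the parity class making the product below hyperbolic; the other is symmetric), one gets $u\epsilon_n=a^{m_n}(u_0\,\tilde\epsilon_n)a^{-m_n}$ with $u_0\tilde\epsilon_n$ a \emph{bounded} element close to $u_0\tilde\epsilon$.

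\emph{Step 2 (the factorisation via Bruhat equations).} It remains to factor $u_0\tilde\epsilon_n$. Using the Bruhat decomposition $G=G^{0}_{\xi_-}G_\xi\sqcup w^{-1}G_\xi$ together with $G_\xi=a^{\bbZ}G^{0}_{\xi}$, one writes $u_0\tilde\epsilon_n=v^-_n\,a^{k_n(u_0)}\,v^+_n$ whenever $u_0\tilde\epsilon_n$ lies in the big cell. The heart of the matter — this is exactly what Section~\ref{sec.proof.of.prop} carries out — is to prove: (i) for all $u_0\in F_r$ and large $n$, $u_0\tilde\epsilon_n$ is in the big cell; and (ii) the torus exponent is \emph{exactly} $k_n(u_0)=1$, with $v^+_n$ lying in a prescribed compact-open piece. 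Both are instances of showing existence and essential uniqueness of solutions of the equations coming from the Bruhat decomposition: existence is produced by invoking Tits' independence property to build automorphisms of $T$ supported on complementary half-trees at $x_0$ which absorb, respectively, the non-trivial part of $u_0$ (on the $\xi_-$-side) and of $\tilde\epsilon_n$ (on the $\xi$-side), thereby writing $u_0\tilde\epsilon_n$ in the required form; condition (flip) supplies the one extra piece of boundary transitivity needed to realise a prescribed map on a triple of ends. Uniqueness up to $M=G^{0}_{\xi}\cap G^{0}_{\xi_-}$ follows from $G^{0}_{\xi_-}\cap aG^{0}_{\xi}a^{-1}=M$; absorbing the $M$-ambiguity into $v^-_n$ via $G^{0}_{\xi_-}=U^-M$ (so that $v^-_n\in U^-$) and making the choice locally constant over the clopen cosets of the groups $G_{B(x_j,i),\xi}$ partitioning $F_r$, one obtains continuous maps. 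Conjugating back, $u\epsilon_n=\Psi_{\epsilon_n}(u)\,a\,\Phi_{\epsilon_n}(u)$ with $\Psi_{\epsilon_n}(u)=a^{m_n}v^-_na^{-m_n}$ and $\Phi_{\epsilon_n}(u)=a^{m_n}v^+_na^{-m_n}\in G^{0}_{\xi}$, which is (1); injectivity of $\Phi_{\epsilon_n}$ comes from the explicit formula and from $u\mapsto u\epsilon_n$ being injective.

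\emph{Step 3 (properties (2)--(4)).} For (2): $\Psi_{\epsilon_n}(u)=a^{m_n}v^-_na^{-m_n}$ with $v^-_n\in U^-$ uniformly bounded (convergent along the subsequence), and since conjugation by $a^{m_n}$ contracts $U^-$ towards $\id$, $\Psi_{\epsilon_n}(F_n)$ shrinks into any given neighbourhood of $\id$ for $n$ large, uniformly over $u_0\in F_r$. For (3): the geometric description from Step~2 identifies $\Phi_{\epsilon_n}(F_n)$ with $a^{m_n}(F_r t)a^{-m_n}$ for the element $t\in C_{\mathrm{odd}}\cup C_{\mathrm{even}}$ recording the finitely many possible local types of $\tilde\epsilon_n$ near its branch vertex — precisely the reason $\mathcal{C}$ is enlarged by these finite sets — hence it is a term of, and as $n$ varies a subsequence of, a tempered F{\o}lner sequence in $\mathcal{C}$. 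For (4): over each clopen piece of $F_r$ the inner map $v^+_n$ is the restriction of a composition of left/right translations inside $G^{0}_{\xi}$ (measure-preserving, $G^{0}_{\xi}$ being unimodular) with conjugation by a fixed power of $a$ (scaling $m_{G^{0}_{\xi}}$ by the corresponding modular factor); hence $\Phi_{\epsilon_n}$, a bijection $F_n\to\Phi_{\epsilon_n}(F_n)$ whose Radon--Nikodym derivative exists everywhere by Proposition~\ref{prop.differentiation_of_measures}, must have that derivative equal to the constant $m_{G^{0}_{\xi}}(F_n)/m_{G^{0}_{\xi}}(\Phi_{\epsilon_n}(F_n))$.

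\emph{Expected main obstacle.} The genuine difficulty is Step~2(ii): pinning the torus exponent to exactly $a$ — that is, controlling the analogue of the time change — and simultaneously pinning down the $G^{0}_{\xi}$-component. This is the only place where \emph{both} Tits' independence and condition (flip) are used, and it demands a careful analysis of how the $T$-actions of $u_0$ and $\tilde\epsilon_n$ interleave near $x_0$ and on $\partial T$, together with the attendant existence/uniqueness for the Bruhat equations. Keeping all estimates uniform in $n$, and extracting the exact constant in (4), is where the bulk of the technical work lies.
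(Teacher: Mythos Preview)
Your overall strategy---obtain a Bruhat-type factorisation $u\epsilon_n=\Psi\cdot a\cdot\Phi$ with the $G^0_{\xi_-}$-factor shrinking, after recentring by conjugation---is the right one and matches the paper's in spirit. However, several steps in your execution have genuine gaps.

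First, resolving the $M$-ambiguity by demanding $v^-_n\in U^-$ does not work: the Levi decomposition $G^0_{\xi_-}=U^-M$ is not in general a semidirect product, and $U^-\cap M$ is typically non-trivial (in $\Aut(T_d)$, $M$ is the pointwise stabiliser of the axis, and $U^-\cap M$ consists of those axis-fixing elements whose local permutation is trivial at all sufficiently $\xi_-$-ward vertices). So the $v^+_n$-factor is still undetermined, and your subsequent claim that injectivity of $\Phi_{\epsilon_n}$ ``comes from the explicit formula and $u\mapsto u\epsilon_n$ injective'' does not follow: a generic locally constant section need be neither injective nor have image equal to one of the prescribed sets $a^mF_rt\,a^{-m}$ in $\mathcal C$, so (3) is also unverified.

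Second, your argument for (4)---that over clopen pieces $\Phi$ is a composition of translations and a conjugation---presupposes that $\Psi$ is locally constant. The paper's $\Psi_\epsilon$ is explicitly \emph{not} locally constant: by Lemma~\ref{properties_psi}(e) one has $\Psi_\epsilon(u_1)=\Psi_\epsilon(u_2)$ iff $u_1\epsilon(\xi)=u_2\epsilon(\xi)$, a boundary condition rather than a coset condition. The Radon--Nikodym computation proceeds instead via the set-level identity $\Phi_\epsilon(uG_{B(x_i,r),\xi})=G_{B(x_{i-2},r),\xi}\Phi_\epsilon(u)$ (Lemma~\ref{images_under_phi}), which does not assert that $\Phi_\epsilon$ is a translation on such cosets, only that it carries one onto the other.

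What your sketch is missing is the specific geometric definition of $\Psi_\epsilon(u)$: it is the element fixing the half-tree $T_{[x_i,x_{i-1}]}$ pointwise and \emph{interchanging} the rays $[x_i,\xi)$ and $[x_i,u\epsilon(\xi))$; existence uses condition (flip) together with Tits' independence, while a canonical continuous choice comes from a coherent-extension algorithm for partial automorphisms (Lemma~\ref{lemma.extension}). This swap property is exactly what drives the bijectivity proof (Proposition~\ref{phi_is_bijection}): from $\Phi_\epsilon(u_1)=\Phi_\epsilon(u_2)$ one first deduces $u_1\epsilon(\xi)=u_2\epsilon(\xi)$, hence $\Psi_\epsilon(u_1)=\Psi_\epsilon(u_2)$, hence $u_1=u_2$; and it is also what underlies Lemma~\ref{images_under_phi}. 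You correctly locate the difficulty (your ``expected main obstacle'') but your proposed mechanism---abstract Bruhat plus an arbitrary locally constant section---does not by itself produce the needed injectivity, image identification, or the constancy in (4).
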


\subsection{The proof}
We remind that for the rest of the proof, our hypothesis is that for every $x \in X$ we have $\mu(G_{\xi}^0 x)=0$. 

\subsubsection{Geometric transverse divergence and $a$-invariance of $\mu$}
\label{sec::transverse_divergence}

The following proposition is the main step of the proof. We obtain additional invariance of $\mu$ under the hyperbolic element $a$ by employing a geometric analogue of the shearing argument of Ratner (see e.g. \cites{ratner.unipotent,ratner.padic,ratner.raghunathan,margulis-tomanov} and the useful notes \cites{quint.notes,benoist.notes}).

\begin{proposition}
\label{prop::a_invariance}
The probability measure $\mu$ is $a$-invariant and, in particular, $G_{\xi}$-invariant.
\end{proposition}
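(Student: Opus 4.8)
The plan is to run a geometric version of Ratner's transverse divergence (shearing) argument. Fix a $\mu$-uniformly generic set $K$ for the finite collection $\mathcal{C}$ of tempered F\o{}lner sequences with $\mu(K)>1-\delta$ (Lemma \ref{lemma.unif.gen}). Apply Lemma \ref{lemma.elliptics.to.identity} with $Y=K$ to produce a point $y\in K$ and a sequence $\epsilon_n\to\id$ in $G\setminus G^0_\xi$ with $\epsilon_n y\in K$ for all $n$. (Since $K$ is $M$-invariant up to null sets and $G^0_\xi=MU^+$ via Levi decomposition, one should be able to arrange $\epsilon_n\notin G^0_\xi$ rather than merely $\notin G^0_\xi$; here the hypothesis rules out the homogeneous case so generic points have trivial $G^0_\xi$-stabilizer fibers.) Feed this $\epsilon_n$ into Proposition \ref{prop.variable.change.RN}: after passing to a subsequence we get, for each $n$, continuous maps $\Psi_{\epsilon_n},\Phi_{\epsilon_n}$ on $F_n$ with $u\epsilon_n=\Psi_{\epsilon_n}(u)\,a\,\Phi_{\epsilon_n}(u)$, with $\Psi_{\epsilon_n}(F_n)\to\{\id\}$, with $\Phi_{\epsilon_n}(F_n)$ a piece of a F\o{}lner sequence in $\mathcal{C}$, and with the Radon--Nikodym derivative of $\Phi_{\epsilon_n}$ constant equal to $m_{G^0_\xi}(F_n)/m_{G^0_\xi}(\Phi_{\epsilon_n}(F_n))$.

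The core computation is then the following. Fix $\theta\in C_c(X)$. Consider the averages
\begin{equation*}
A_n:=\frac{1}{m_{G^0_\xi}(F_n)}\int_{F_n}\theta(u\epsilon_n y)\,dm_{G^0_\xi}(u).
\end{equation*}
On one hand, since $\epsilon_n\to\id$ and $\theta$ is uniformly continuous, $A_n$ is asymptotically equal to $\frac{1}{m_{G^0_\xi}(F_n)}\int_{F_n}\theta(uy)\,dm_{G^0_\xi}(u)$, which converges to $\int_X\theta\,d\mu$ because $y\in K$ is generic along $F_n\in\mathcal{C}$. On the other hand, substitute the factorization and change variables via $\Phi_{\epsilon_n}$: writing $\theta(u\epsilon_n y)=\theta\bigl(\Psi_{\epsilon_n}(u)\,a\,\Phi_{\epsilon_n}(u)\,y\bigr)$ and using $\Psi_{\epsilon_n}(F_n)\to\{\id\}$ together with uniform continuity, $A_n$ is asymptotically equal to
\begin{equation*}
\frac{1}{m_{G^0_\xi}(F_n)}\int_{F_n}\theta\bigl(a\,\Phi_{\epsilon_n}(u)\,y\bigr)\,dm_{G^0_\xi}(u)
=\frac{1}{m_{G^0_\xi}(\Phi_{\epsilon_n}(F_n))}\int_{\Phi_{\epsilon_n}(F_n)}\theta(a v y)\,dm_{G^0_\xi}(v),
\end{equation*}
where the equality is precisely the constant-Jacobian property \ref{prop.variable.change.RN_4}. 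Since $\Phi_{\epsilon_n}(F_n)$ is a tail of a F\o{}lner sequence in $\mathcal{C}$ and $\epsilon_n y\in K$ — so that $v\mapsto \theta(avy)$ averaged over a generic-set point converges — this last quantity converges to $\int_X\theta(az)\,d\mu(z)=\int_X (a^{-1})^*\theta\,d\mu$. Equating the two limits gives $\int_X\theta\,d\mu=\int_X\theta\circ a\,d\mu$ for all $\theta\in C_c(X)$, i.e. $a_*\mu=\mu$. Combined with $G^0_\xi$-invariance and the $AN$ decomposition $G_\xi=a^{\mathbb{Z}}G^0_\xi$, this yields $G_\xi$-invariance.

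The main obstacle is making the two ``asymptotically equal'' steps genuinely uniform: the generic-point convergence has a rate depending on the point, so one must invoke the $\mu$-uniformly generic set $K$ (Lemma \ref{lemma.unif.gen}) for \emph{both} $y$ and $\epsilon_n y$ simultaneously, and one must check that the relevant F\o{}lner tails $\Phi_{\epsilon_n}(F_n)$ really do lie in the prescribed finite family $\mathcal{C}$ — this is exactly why $\mathcal{C}$ was set up with the finite color-classes $C_{\text{even}},C_{\text{odd}}$, and why Proposition \ref{prop.variable.change.RN}\ref{prop.variable.change.RN_3} is phrased that way. A subtlety to be careful with is the regime where the indices do not match up (e.g. even vs. odd levels of the tree), which forces the bookkeeping over the whole collection $\mathcal{C}$ rather than a single sequence; and one should double-check that $\epsilon_n$ can indeed be taken outside $G^0_\xi$ (not just outside $G_\xi$), as \ref{prop.variable.change.RN_1} needs the genuine appearance of $a$, reflecting the nontrivial ``time shift'' in the Bruhat decomposition.
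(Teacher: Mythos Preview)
Your overall strategy matches the paper's: fix a uniformly generic set $K$ for $\mathcal{C}$, use Lemma \ref{lemma.elliptics.to.identity} to find $y\in K$ with $\epsilon_n y\in K$, feed the sequence $\epsilon_n$ into Proposition \ref{prop.variable.change.RN}, and compare two evaluations of the ergodic average $A_n$. The change-of-variables step via $\Phi_{\epsilon_n}$ and the use of the constant Radon--Nikodym derivative are exactly as in the paper.

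However, your ``first side'' contains a genuine error. You claim that because $\epsilon_n\to\id$ and $\theta$ is uniformly continuous, $A_n$ is asymptotically equal to $\frac{1}{m_{G^0_\xi}(F_n)}\int_{F_n}\theta(uy)\,dm_{G^0_\xi}(u)$. This would require $|\theta(u\epsilon_n y)-\theta(uy)|$ to be small uniformly over $u\in F_n$, i.e.\ $u\epsilon_n u^{-1}$ close to the identity for all such $u$. But that is precisely what fails --- it is the essence of the shearing phenomenon that $u\epsilon_n u^{-1}$ drifts away from the identity as $u$ ranges over the growing sets $F_n$; indeed the factorization $u\epsilon_n=\Psi_{\epsilon_n}(u)\,a\,\Phi_{\epsilon_n}(u)$ shows that $u\epsilon_n u^{-1}$ carries a genuine factor of the hyperbolic element $a$. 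The paper avoids this pitfall by not passing through $\theta(uy)$ at all: it simply uses that $\epsilon_n y\in K$ is itself uniformly generic, so $A_n=\frac{1}{m_{G^0_\xi}(F_n)}\int_{F_n}\theta\bigl(u\cdot(\epsilon_n y)\bigr)\,dm_{G^0_\xi}(u)\to\int_X\theta\,d\mu$ directly from Lemma \ref{lemma.unif.gen}. You already have $\epsilon_n y\in K$ in hand, so the fix is immediate --- and indeed you note later that uniform genericity of both $y$ and $\epsilon_n y$ is needed, so the correct ingredient is present, just misapplied.

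Two smaller points. On your ``second side'', after the change of variables the integrand is $\theta(avy)$ with base point $y$, so the relevant generic point is $y\in K$ (applied to the function $\theta\circ a\in C_c(X)$), not $\epsilon_n y$ as you wrote. And your parenthetical worries about $\epsilon_n\notin G^0_\xi$ versus $\epsilon_n\notin G_\xi$ are unnecessary and have the inclusions reversed: Lemma \ref{lemma.elliptics.to.identity} already delivers $\epsilon_n\in G\setminus G^0_\xi$, which is exactly the hypothesis of Proposition \ref{prop.variable.change.RN}.
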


\begin{proof}
Let $f \in C_c(X)$. We denote by $f^a$ the function $f^a(x):=f(a x)$. Let $\mathcal{C}$ be the finite collection of tempered F{\o}lner sequences for $G^0_{\xi}$, as defined before Proposition \ref{prop.variable.change.RN}. By Lemma \ref{lemma.unif.gen}, fix a $\mu$-uniformly generic subset $K \subset X$ of positive $\mu$-measure for $\mathcal{C}$. Then, by Lemma \ref{lemma.elliptics.to.identity} 
, there exist a point $y_0 \in K$ and a sequence $\epsilon_n \in G \setminus G_{\xi}^{0}$ such that $\epsilon_n \xrightarrow{n \to \infty} \id$ and $\epsilon_n y_0 \in K$, for every $n \geq 1$. 
By Proposition \ref{prop.variable.change.RN}, 
up to passing to subsequence and reindexing, one obtains the maps $\Phi_{\epsilon_n}, \Psi_{\epsilon_n}$. 


Let $\delta>0$. By the continuity of the $G^0_{\xi}$-action on $X$, uniform continuity of $f$ and Proposition \ref{prop.variable.change.RN}$(2)$, for large enough $n$ and any $ u \in F_n, x \in X$ 
\begin{equation}\label{eq.drift1}
\left| f(x)-f(\Psi_{\epsilon_n}(u) x) \right| < \frac{\delta}{4}.
\end{equation}

We apply pointwise ergodic theorem for the amenable group $G_\xi^0$. By choice of $K$ from Lemma \ref{lemma.unif.gen} and since $\epsilon_n y_0 \in K$ for every $n\in \bbN$, we have for sufficiently large $n$
\begin{equation}\label{eq.drift2}
\left| \frac{1}{m_{G^0_{\xi}}(F_n)} \int_{F_n} f(u \epsilon_{n} y_0) dm_{G^0_{\xi}}(u) - \int_X f(x) d\mu(x) \right| < \frac{\delta}{4}.
\end{equation} 

By Proposition \ref{prop.variable.change.RN}$(1)$, combining  $(\ref{eq.drift1})$ and $(\ref{eq.drift2})$, for large enough $n$

\begin{equation}\label{eq.drift3}
\left| \frac{1}{m_{G^0_{\xi}}(F_n)} \int_{F_n} f^a(\Phi_{\epsilon_n}(u) y_0) dm_{G^0_{\xi}}(u) - \int_X f(x) d\mu(x) \right| <\frac{\delta}{2}.
\end{equation}

Since, by Proposition \ref{prop.variable.change.RN}
the map $\Phi_{\epsilon_{n}} : F_{n} \to \Phi_{\epsilon_{n}}(F_{n})$ is a continuous bijection, for the first integral in \eqref{eq.drift3}, the following equality holds
\begin{equation*}
\frac{1}{m_{G^0_{\xi}}(F_n)} \int_{F_{n}} f^a(\Phi_{\epsilon_{n}}(u) y_0) dm_{G^0_{\xi}}(u)=\frac{1}{m_{G^0_{\xi}}(F_n)} \int_{\Phi_{\epsilon_{n}}(F_{n})} f^a(u y_0) d(\Phi_{\epsilon_{n}})_{\ast} m_{G^0_{\xi}}(u).
\end{equation*}

By Proposition \ref{prop.variable.change.RN}$(4)$, we have


\begin{equation}
\label{eq.drift_5}
\begin{split}
& \frac{1}{m_{G^0_{\xi}}(F_n)} \int_{\Phi_{\epsilon_{n}}(F_{n})} f^a(u y_0) d(\Phi_{\epsilon_{n}})_{\ast} m_{G^0_{\xi}}(u)\\ &= \frac{1}{m_{G^0_{\xi}}(F_n)}\int_{\Phi_{\epsilon_{n}}(F_{n})} f^a(u y_0) \frac{d(\Phi_{\epsilon_{n}})_{\ast} m_{G^0_{\xi}}}{dm_{G^0_{\xi}}}(u) dm_{G^0_{\xi}}(u)\\
&=\frac{1}{m_{G^0_{\xi}}(\Phi_{\epsilon_{n}}(F_{n}))} \int_{\Phi_{\epsilon_{n}}(F_{n})} f^a(u y_0) dm_{G^0_{\xi}}(u).\\
\end{split}
\end{equation}
Therefore, $(\ref{eq.drift3})$ reads:

\begin{equation}\label{eq.drift6}
\left|\frac{1}{m_{G^0_{\xi}}(\Phi_{\epsilon_{n}}(F_{n}))} \int_{\Phi_{\epsilon_{n}(F_{n})}} f^a(u y_0) dm_{G^0_{\xi}}(u) - \int_X f(x) d\mu(x) \right| <\frac{\delta}{2}.
\end{equation}

Now, by Proposition \ref{prop.variable.change.RN}, $\Phi_{\epsilon_n}(F_n)$ is a subsequence of a tempered F{\o}lner sequence from $\mathcal{C}$. Furthermore, by our choices, $y_0$ is also a generic point for the tempered F{\o}lner sequence $\Phi_{\epsilon_n}(F_n)$. Since $f^a \in C_c(X)$, it follows by Lemma \ref{lemma.unif.gen} that for $n$ sufficiently large, we have

\begin{equation}\label{eq.drift7}
\left| \frac{1}{m_{G^0_{\xi}}(\Phi_{\epsilon_{n}}(F_{n}))} \int_{\Phi_{\epsilon_{n}(F_{n})}} f^a(u y_0) dm_{G^0_{\xi}}(u) - \int_X f^a(x) d\mu(x) \right| <\frac{\delta}{2}.
\end{equation}

From  \eqref{eq.drift6} and \eqref{eq.drift7}, we conclude

\begin{equation}
\left| \int_X f^a(x) d\mu(x)-\int_X f(x) d\mu(x) \right| < \delta.
\end{equation}

Since $\delta>0$ and $f$ are arbitrary, this shows $a$-invariance of $\mu$. Since $\mu$ is $G^0_\xi$-invariant by assumption, its $G_\xi$-invariance follows from $AN$ decomposition $G_\xi=a^\bbZ G_\xi^0$ (see \S \ref{subsub.decompositions}).
\end{proof}

\subsubsection{Mautner phenomenon and $a$-ergodicity of $\mu$}

\begin{lemma}\label{lemma.a.ergodic}
The probability measure $\mu$ is $a$-ergodic.
\end{lemma}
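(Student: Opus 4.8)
The plan is to deduce $a$-ergodicity from the (assumed) $G^0_\xi$-ergodicity of $\mu$ via the Mautner phenomenon. By Proposition~\ref{prop::a_invariance} the measure $\mu$ is $G_\xi$-invariant, so it gives rise to a unitary (Koopman) representation $\pi$ of $G_\xi$ on $L^2(X,\mu)$. Suppose, towards a contradiction, that $\mu$ is not $a$-ergodic; then there is an $a$-invariant Borel set $A\subseteq X$ with $0<\mu(A)<1$, and $\phi:=\mathbbm{1}_A-\mu(A)$ is a nonzero vector in $L^2(X,\mu)$ with $\int_X \phi\, d\mu=0$ and $\pi(a^n)\phi=\phi$ for all $n\geq 1$.

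The key step is the following: since $\pi(a^n)\phi=\phi$ for every $n$, the (constant) sequence $\{\pi(a^n)\phi\}_{n\geq 1}$ converges weakly to $\phi$, so Lemma~\ref{Mautner} applies with $v=v_0=\phi$ and shows that $\phi$ is $\overline{U}^+$-invariant. Because $G$ satisfies Tits' independence property under the hypotheses of Theorem~\ref{thm.measure.classification}, Lemma~\ref{lem::closure_contraction_gr} identifies $\overline{U}^+$ with $G^0_\xi$; hence $\phi$ is $G^0_\xi$-invariant. Since $\mu$ is $G^0_\xi$-invariant and ergodic by hypothesis, $\phi$ must be $\mu$-a.e.\ constant, and then $\int_X\phi\, d\mu=0$ forces $\phi=0$, contradicting $0<\mu(A)<1$. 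Therefore $\mu$ is $a$-ergodic.

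The argument is short, and the only point that I expect to require care is that Lemma~\ref{Mautner} is phrased for unitary representations of the ambient group $G$, whereas here we only have a representation of $G_\xi$. This is harmless: the proof of the Mautner phenomenon only uses the relations $a^{-n}ua^{n}\to\id$ for $u\in U^+$, an internal computation inside $\langle a,U^+\rangle\subseteq G_\xi$, so it applies verbatim to the restriction of $\pi$ to $G_\xi$. I would either make this remark explicitly or record the $G_\xi$-version of Lemma~\ref{Mautner} separately; beyond that I anticipate no obstacle.
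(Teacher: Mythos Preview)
Your proposal is correct and follows essentially the same approach as the paper: set up the Koopman representation of $G_\xi$ on $L^2(X,\mu)$, apply the Mautner phenomenon (Lemma~\ref{Mautner}) to an $a$-invariant vector to obtain $\overline{U^+}$-invariance, identify $\overline{U^+}=G^0_\xi$ via Lemma~\ref{lem::closure_contraction_gr}, and conclude by $G^0_\xi$-ergodicity. The paper argues directly with an arbitrary $a$-invariant $L^2$ function rather than by contradiction with an indicator, and it does not pause over the $G$ vs.\ $G_\xi$ issue you raise, but these are cosmetic differences.
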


\begin{proof}
Consider the representation of $G_\xi=a^\mathbb{Z}G^0_\xi$ on $L^2(X,\mu)$ given by
$
g: f(x) \mapsto f(g^{-1} x) 
$. By $G_\xi$-invariance of $\mu$, it is a unitary representation.

Let $f \in L^2(X,\mu)$ be an $a$-invariant function. By Mautner phenomenon (Lemma \ref{Mautner}) 
$f$ is invariant under $\overline{U^+}$. Since, by Lemma \ref{lem::closure_contraction_gr}, $\overline{U^+}=G^0_\xi$ and by $G_{\xi}^0$-ergodicity it follows that $f$ is constant, implying $a$-ergodicity.
\end{proof}

 \subsubsection{Constructing $G_{\xi_-}^0$-invariant functions}
 \label{sec::G_xi_inv_fct}

In this section, for each $\theta \in C_c(X)$ we construct a function $\tilde{\theta}: X \to \bbR$ that is $G_{\xi_-}^0$-invariant. This construction goes in two steps. \\[-7pt]

\noindent \textit{Step 1.} We define $\hat{\theta} :X \to \bbR$ by $$  \hat{\theta}(x):=\liminf_{\ell \to \infty} \frac{1}{\ell} \sum_{i=0}^{\ell-1} \theta(a^{\ell} x).$$ 
Note that $\hat{\theta}$ is a bounded, Borel measurable and $a$-invariant function. Let $u\in U^{-}$. Observe that for every $x \in X$, $\varepsilon>0$ and $\ell$ large enough, we have $|\theta(a^{\ell} x)-\theta(a^{\ell}u x)|<\varepsilon$. This follows by writing $\theta(a^{\ell}u x)=\theta(a^{\ell}u a^{-\ell} a^{\ell} x)$, using the facts that $a^{\ell} u a^{-\ell} \to \id $ and uniform continuity of $\theta$. Thus, $\hat{\theta}$ is $U^{-}$-invariant. \\[-7pt]

\noindent \textit{Step 2.} We define $\tilde{\theta} :X \to \bbR$ by $$ \tilde{\theta}(x):=\int_{M} \hat{\theta}(h x)dm_M(h),$$ where $m_M$ denotes the Haar probability measure on the compact group $M:= G_{\xi_-}^0 \cap G_{\xi}^0$.

Again, $\tilde{\theta}$ is bounded and Borel measurable $M$-invariant function. 
Moreover, we claim $\tilde{\theta}$ is $G_{\xi_-}^0$-invariant. By Levi decomposition (\S \ref{subsub.decompositions}), it suffices to show that it is $U^-$-invariant. This follows from $U^-$-invariance of $\hat{\theta}$ and the fact that $M$ normalizes $U^-$.  
Using these, for  $u \in U^-$ we have
\begin{equation*}
\begin{split}
 \tilde{\theta}(u x)=\int_{M} \hat{\theta}(hu x)dm_M(h)=\int_{M} \hat{\theta}(u'h x)dm_M(h)
 =\int_{M} \hat{\theta}(h x)dm_M(h)=\tilde{\theta}(x)  .
 \end{split}
\end{equation*}

\subsubsection{Further invariance properties of $\tilde{\theta}$} 
\label{sec::app_Fubini_full_measures}

\begin{lemma}\label{lem.theta.tilde.mu.constant}
There exists a $G^0_{\xi_-}$-invariant subset $Z \subseteq X$ of full $m_X$-measure on which $\tilde{\theta}$ is constant and equals $c=\int_{X} \theta(x) d\mu(x)$. 
\end{lemma}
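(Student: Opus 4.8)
The plan is to combine three facts already at our disposal: (i) $\mu$ is $a$-invariant and $a$-ergodic (Proposition \ref{prop::a_invariance} and Lemma \ref{lemma.a.ergodic}), (ii) $\mu$ is $G^0_\xi$-invariant by assumption, and (iii) the function $\tilde\theta$ is $G^0_{\xi_-}$-invariant by the construction of \S\ref{sec::G_xi_inv_fct}. The key point is that $\tilde\theta$, being built by averaging Birkhoff-type limits of $\theta$ along the $a$-flow and then over the compact group $M$, must be $\mu$-almost-everywhere constant, and that constant is forced to be $c=\int_X\theta\,d\mu$. From there one upgrades "$\mu$-a.e. constant" to "$m_X$-a.e. constant on a $G^0_{\xi_-}$-invariant conull set" by exploiting that $\mu$ and $m_X$ are related through the $G^0_{\xi_-}$-direction, via Bruhat/Levi decomposition and Fubini.

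**Main steps.** First I would show $\hat\theta = c$ $\mu$-a.e.: since $\mu$ is $a$-invariant and $a$-ergodic, the Birkhoff ergodic theorem for the single transformation $a$ gives that $\frac1\ell\sum_{i=0}^{\ell-1}\theta(a^i x)\to\int_X\theta\,d\mu=c$ for $\mu$-a.e. $x$; hence the $\liminf$ defining $\hat\theta$ equals $c$ on a $\mu$-conull set. (One must be slightly careful that the excerpt's display writes $\theta(a^\ell x)$ rather than $\theta(a^i x)$ in the summand — I will read this as the intended Birkhoff average $\frac1\ell\sum_{i=0}^{\ell-1}\theta(a^ix)$.) Averaging over $M$ with the probability measure $m_M$, and using $M$-invariance of $\mu$ together with Fubini (Lemma \ref{lemma.fubini}), gives $\tilde\theta(x)=\int_M\hat\theta(hx)\,dm_M(h)=c$ for $\mu$-a.e. $x$ as well. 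Thus the $\mu$-conull set $\{\tilde\theta=c\}$ is nonempty and, intersecting it with the $\mu$-conull $G^0_\xi$-invariant set on which the relevant almost-everywhere statements hold, we get a $\mu$-conull Borel set $Z_0$ with $\tilde\theta\equiv c$ on $Z_0$.

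**Passing from $\mu$ to $m_X$.** Now I would use that $\tilde\theta$ is $G^0_{\xi_-}$-invariant to spread the conclusion over an $m_X$-conull set. The mechanism: project $\mu$ to $X$ and consider the set $Z_0$ with $\mu(Z_0)=1$; by Lemma \ref{lemma.fubini} applied to the group $G^0_{\xi_-}$ acting measurably (note $\mu$ need \emph{not} be $G^0_{\xi_-}$-invariant, so one instead argues with $m_X$): using the Bruhat-decomposition description of $m_G$ restricted to $G^0_{\xi_-}G^0_\xi$ (Corollary \ref{lemma.Haar.VAU}) together with a disintegration of $m_X$ in the $vu$-coordinates, one shows that $m_X$-almost every point can be written as $v u x_0$ with $v\in G^0_{\xi_-}$, $ux_0$ in the $\mu$-conull set $Z_0$ — because the $G^0_\xi$-flow-box structure around a generic point of $\mu$ fills an $m_X$-positive, hence (by $G$-ergodicity/Howe–Moore and $a$-invariance, or by a covering argument using homogeneity of $X$) $m_X$-conull set. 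On each such point $\tilde\theta(vux_0)=\tilde\theta(ux_0)=c$ by $G^0_{\xi_-}$-invariance of $\tilde\theta$ and $\tilde\theta\equiv c$ on $Z_0$. Finally, set $Z$ to be the $G^0_{\xi_-}$-saturation of the full-measure set just produced, intersected with a $G^0_{\xi_-}$-invariant conull Borel set (the saturation of a Borel conull set under a locally compact group acting with conull orbits is again conull and can be taken Borel by standard descriptive set theory); then $Z$ is $G^0_{\xi_-}$-invariant, $m_X(Z)=1$, and $\tilde\theta\equiv c$ on $Z$.

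**The main obstacle.** The delicate point is the transfer from "$\mu$-conull" to "$m_X$-conull." The measure $\mu$ is \emph{not} assumed absolutely continuous with respect to $m_X$ (that is precisely what we are ultimately trying to prove, that $\mu=m_X$), so one cannot simply say a $\mu$-conull set is $m_X$-conull. The correct route is the one suggested by the structure of the argument: $\tilde\theta$ is invariant under the "opposite horospherical" group $G^0_{\xi_-}$, and the orbits of $G^0_\xi$ together with $G^0_{\xi_-}$ and $a$ generate a group acting with dense/conull orbits on $X$ (by $2$-transitivity on $\partial T$ and topological simplicity, i.e. Howe–Moore / Mautner); combining this with the product structure of $m_G$ on $G^0_{\xi_-}G^0_\xi$ (Corollary \ref{lemma.Haar.VAU}) and Fubini lets one conclude that the set where $\tilde\theta=c$ has full $m_X$-measure. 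Making the measurability of the saturated set $Z$ precise (invoking Suslin/analytic-set arguments as already done in Lemma \ref{lemma.product.structure}) is routine but needs to be stated. I expect this descriptive-set-theoretic bookkeeping plus the Fubini/flow-box argument to be where the real content of the proof lies; the ergodic-averaging part is immediate from Birkhoff's theorem and Lemma \ref{lemma.fubini}.
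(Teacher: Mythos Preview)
Your first half---showing $\hat\theta=c$ $\mu$-a.e.\ via Birkhoff and $a$-ergodicity of $\mu$, then $\tilde\theta=c$ $\mu$-a.e.\ via Lemma~\ref{lemma.fubini} applied to $M$---is correct and matches the paper exactly.

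The gap is in the passage from $\mu$-conull to $m_X$-conull. You propose to use the product structure on $G^0_{\xi_-}G^0_\xi$ (Corollary~\ref{lemma.Haar.VAU}) together with Howe--Moore/$G$-ergodicity of $m_X$. There are two problems. First, $G^0_{\xi_-}G^0_\xi$ is merely an open subset of $G$, not $m_G$-conull, so the Fubini argument with $G^0_\xi$-invariance of $\mu$ only yields that $Z$ has positive $m_X$-measure on a neighborhood of one orbit, not full measure. Second, your proposed upgrade via Howe--Moore or $a$-ergodicity of $m_X$ is circular: at this stage of the argument $\Gamma$ is not yet known to be a lattice (that is precisely Proposition~\ref{prop::lattice}, which \emph{uses} the present lemma), so $m_X$ may be infinite and the Howe--Moore/mixing machinery for $L^2(X,m_X)$ is not available in the form you need.

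The paper's fix is a one-line sharpening of your Fubini step: instead of using only $G^0_\xi$-invariance of $\mu$, use the full $G_\xi$-invariance established in Proposition~\ref{prop::a_invariance}. Applying Lemma~\ref{lemma.fubini} with $G_\xi$ gives a point $y_0=g_0\Gamma$ such that $bg_0\in Z\Gamma$ for $m_{G_\xi}$-a.e.\ $b\in G_\xi$. Since $Z$ is $G^0_{\xi_-}$-invariant, the preimage $Z\Gamma\subset G$ then contains $m_{G^0_{\xi_-}}\otimes m_{G_\xi}$-almost all of $G^0_{\xi_-}G_\xi g_0$; and by Bruhat decomposition, $G^0_{\xi_-}G_\xi$ \emph{is} $m_G$-conull. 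No ergodicity of $m_X$ is invoked. (Equivalently, one could rescue your route by observing that $\tilde\theta$ is also $a$-invariant, so $Z$ is $a$-invariant, and then $\bigcup_n a^n G^0_{\xi_-}G^0_\xi g_0 = G^0_{\xi_-}G_\xi g_0$; but this is just the paper's argument in disguise, and you did not make the $a$-invariance of $Z$ explicit.)
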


\begin{proof}
By $a$-ergodicity of $\mu$ (Lemma \ref{lemma.a.ergodic}) and by Birkhoff's ergodic theorem, $\hat{\theta}$ is $\mu$-a.e. constant and equal to $\int_{X} \theta(x) d\mu(x)=:c$. Let $Z' := \hat{\theta}^{-1}(c)$. Clearly, $\mu(Z')=1$. Since $M \leq G^0_{\xi}$ preserves the measure $\mu$, by Lemma \ref{lemma.fubini} we conclude that for $\mu$-a.e. $x \in X$ and $m_{M}$-a.e. $h \in  M$, we have $h x \in Z'$. By construction of $\tilde{\theta}$, this implies $\tilde{\theta}(x)=c$ for all $x\in Z'$. Denote by $Z:=\tilde{\theta}^{-1}(c)$. Clearly, $\mu(Z)=1$, and $Z$ is $G_{\xi_-}^0$-invariant by $G_{\xi_-}^0$-invariance of $\tilde{\theta}$. 

Now we show that $Z$ has full $m_X$ measure. By Proposition \ref{prop::a_invariance}, $\mu$ is $G_\xi$-invariant. Therefore, applying Lemma \ref{lemma.fubini}, we get that for $\mu$-a.e. $x \in X$, and $m_{G_{\xi}}$-a.e. $b \in G_{\xi}$, we have $b x \in Z$. Fix some $y_0 \in X$ such that  $b  y_0 \in Z$ for $m_{G_{\xi}}$-a.e. $b \in G_{\xi}$. Denote $y_0=:g_0\Gamma$. 

Let $\pi: G \to G/\Gamma = X$ be the quotient map and consider the preimage $\pi^{-1}(Z) = Z\Gamma$. We have $b g_0 \in Z\Gamma$ for  $m_{G_{\xi}}$-a.e. $b \in G_{\xi}$. By $G^0_{\xi_-}$-invariance of $Z$,  $Z\Gamma$ is $G^0_{\xi_-}$-invariant too. It follows that $Z\Gamma $ contains $m_{G^0_{\xi_-}} \otimes m_{G_{\xi}}$-almost all of $G^0_{\xi_-} G_{\xi} g_0$, which is $m_G$-conull in $G$ by Bruhat decomposition. Hence, $Z$ has full $m_X$ measure.
\end{proof}

\subsubsection{$\Gamma$ is a lattice}

\begin{proposition}
\label{prop::lattice}
Let $\Gamma$ be discrete subgroup of $G$ and $X=G/\Gamma$. Let $\mu$ be a $G^0_{\xi}$-invariant and ergodic Borel probability measure on $X$ such that for every $x \in X$ we have $\mu(G_{\xi}^0 x)=0$. Then $\Gamma$ is a lattice of $G$.
\end{proposition}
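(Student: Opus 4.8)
The plan is to produce on $X$ a nonnegative, $m_X$-integrable Borel function that is $m_X$-almost everywhere equal to a strictly positive constant. Since $G$ is unimodular (it acts $2$-transitively on $\partial T$, see \S\ref{subsub.haar}) and $\Gamma$ is discrete, $X=G/\Gamma$ carries a $G$-invariant Radon measure $m_X$, unique up to scaling, and by definition $\Gamma$ is a lattice if and only if $m_X(X)<\infty$. So it suffices to exhibit such a function: its integral equals the positive constant times $m_X(X)$, and finiteness of the integral forces $m_X(X)<\infty$.

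The function will be one of the $\tilde\theta$'s from \S\ref{sec::G_xi_inv_fct}. First I would pick $\theta\in C_c(X)$ with $\theta\ge 0$ and $c:=\int_X\theta\,d\mu>0$; this is possible since $\mu$ is a Borel probability measure on a locally compact second countable space, hence inner regular, so $\mu(K)>0$ for some compact $K$, and any $\theta\in C_c(X)$ with $\theta\ge\mathbbm{1}_K$ works. For this $\theta$, Lemma \ref{lem.theta.tilde.mu.constant} provides an $m_X$-conull set $Z\subseteq X$ on which $\tilde\theta\equiv c$, so that $\int_X\tilde\theta\,dm_X=c\cdot m_X(X)$. Note also that, since $\theta\ge 0$, the functions $\hat\theta$ and $\tilde\theta$ built from it are nonnegative.

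It then remains to bound $\int_X\tilde\theta\,dm_X$ from above by a finite quantity. Recalling that $\hat\theta(x)=\liminf_{\ell\to\infty}\frac{1}{\ell}\sum_{i=0}^{\ell-1}\theta(a^ix)$, Fatou's lemma together with the $a$-invariance of $m_X$ (valid since $m_X$ is $G$-invariant) yields
\[
\int_X\hat\theta\,dm_X\ \le\ \liminf_{\ell\to\infty}\frac{1}{\ell}\sum_{i=0}^{\ell-1}\int_X\theta(a^ix)\,dm_X(x)\ =\ \int_X\theta\,dm_X,
\]
and the right-hand side is finite because $m_X$ is Radon and $\theta$ is compactly supported. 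Then, since $\tilde\theta(x)=\int_M\hat\theta(hx)\,dm_M(h)$, Tonelli's theorem (both $m_X$ and $m_M$ being $\sigma$-finite), the $M$-invariance of $m_X$ and the normalization $m_M(M)=1$ give
\[
\int_X\tilde\theta\,dm_X=\int_M\Big(\int_X\hat\theta(hx)\,dm_X(x)\Big)\,dm_M(h)=\int_X\hat\theta\,dm_X\ \le\ \int_X\theta\,dm_X\ <\ \infty.
\]
Combining with the previous paragraph, $c\cdot m_X(X)<\infty$; as $c>0$, this gives $m_X(X)<\infty$, i.e. $\Gamma$ is a lattice.

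I do not expect a serious obstacle here: all the representation-theoretic and ergodic input (Howe--Moore, the Mautner phenomenon, $a$-ergodicity, the divergence argument) has already been absorbed into Proposition \ref{prop::a_invariance} and Lemma \ref{lem.theta.tilde.mu.constant}, and what is left is soft measure theory. The two points to handle carefully are the interchanges $\int\liminf\le\liminf\int$ (Fatou, legitimate by nonnegativity) and the swap of the $X$- and $M$-integrals (Tonelli, legitimate by $\sigma$-finiteness); and one must genuinely use that $Z$ has \emph{full} $m_X$-measure rather than merely positive measure, which is exactly what Lemma \ref{lem.theta.tilde.mu.constant} asserts and where the $G_\xi$-invariance of $\mu$ (Proposition \ref{prop::a_invariance}) and Bruhat decomposition enter.
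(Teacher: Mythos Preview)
Your proof is correct and follows essentially the same approach as the paper's: choose $\theta\in C_c(X)$ nonnegative with $\int\theta\,d\mu>0$, use Lemma~\ref{lem.theta.tilde.mu.constant} to get $\tilde\theta\equiv c$ on an $m_X$-conull set, then bound $\int_X\tilde\theta\,dm_X$ via Tonelli/Fubini (for the $M$-average) and Fatou (for the $\liminf$) by $\int_X\theta\,dm_X<\infty$. Your write-up is slightly more explicit than the paper's about why $c>0$ and about naming Tonelli rather than Fubini, but the argument is the same.
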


\begin{proof}
Let $\theta \in C_c(X)$ be a non-negative function that is not identically zero on the support of $\mu$. Let $\tilde{\theta}$ be as constructed in \S \ref{sec::G_xi_inv_fct}. 
By Lemma \ref{lem.theta.tilde.mu.constant},
$\tilde{\theta} \equiv c$ $m_X$-almost everywhere. Observe also that $c>0$.

Using Fubini's Theorem, $G$-invariance of $m_X$ and Fatou's Lemma, we have
\begin{equation*}
\begin{aligned}
c   m_X(X)&=\int_X \tilde{\theta}(y) dm_X(y)= \int_X \left(\int_{M} \hat{\theta}(hy)dm_M(h)\right) dm_X(y)\\
&=\int_{M} \left(\int_X \hat{\theta}(y)dm_X(y)\right) dm_M(h)= \int_X \hat{\theta}(y)dm_X(y)\\
&\leq \liminf_{n \to \infty} \frac{1}{n} \sum_{\ell=0}^{n-1}\int_X \theta(a^\ell y) dm_X(y)\\ 
&=\int_X \theta(y) dm_X(y)<+\infty.\\
\end{aligned}
\end{equation*}
The last inequality is strict since $\theta$ is compactly supported and bounded, and $m_X$ is a Radon measure on $X$. 
Hence $\Gamma$ is a lattice in $G$.
\end{proof}

\subsubsection{End of the proof: $\mu=m_X$}

By Proposition \ref{prop::lattice}, we normalize the finite measure $m_X$ to be a probability measure on $X$.

Since the canonical $G$-action on $(X, m_X)$ is clearly ergodic, by the Howe--Moore property \cites{BM00a,Cio} of $G$, $a$ acts ergodically on $(X,m_X)$.

Let $\theta \in C_c(X)$ and $Z\subseteq X$ be as in Lemma \ref{lem.theta.tilde.mu.constant}, in particular $\tilde{\theta}$ is constant on $Z$ and equals $\int_X \theta(x)d\mu(x)$. By ergodicity of $m_X$ under $a$-action and Birkhoff's Ergodic Theorem, $\hat{\theta}$ is constant  $m_X$-a.e. and equals  $\int_X \theta(x) dm_X(x)$. Since $m_X$ is $M$-invariant, by Lemma \ref{lemma.fubini}, $\tilde{\theta}$ is also constant on a set $Z_1$ with $m_X(Z_1)=1$ and equals $\int_X \theta(x) dm_X(x)$. Since $m_X(Z)=1$, we deduce $Z\cap Z_1\neq \emptyset$ and for $z$ in this intersection, we have
$$
\int_X \theta(x) dm_X(x)= \tilde{\theta}(z) =\int_X \theta(x) d\mu(x).
$$
Since $\theta$ is an arbitrary function in $C_c(X)$, we deduce $\mu=m_X$.

\section{Proof of Proposition \ref{prop.variable.change.RN}}\label{sec.proof.of.prop}

Recall from Bruhat decomposition (\S \ref{subsub.decompositions}) that for an element $g \in G$ with $g \xi \neq \xi_-$, we can write $g$ as a product $va^nu$ where $v \in G^0_{\xi_-}$, $n \in \mathbb{Z}$ and $u \in G^0_\xi$. This factorization, key to the drift argument, is not unique. To be able to make use of it and to calculate the Radon--Nikodym derivatives as in Proposition \ref{prop.variable.change.RN}, we need to fix some appropriate (non-global) sections in this factorization (i.e. $\Psi_{\epsilon}$ and $\Phi_\epsilon$). To do this, we start by describing an algorithm to extend in a unique and coherent manner a given partial automorphism of $T$. This will be done by fixing some total orders on permutation groups and using Tits' independence property.

\subsection{Extensions of partial automorphisms}\label{subsec.canonical_extension}

Let $D\subset T$ be a subtree. Let $\tau:D\to T$ be a graph homomorphism. We say $(D,\tau)$ is a \textbf{partial automorphism} of $G$, if there exists $g\in G$, such that $g\big|_D=\tau$. We also say that such a $g$ extends $(D,\tau)$. In general, there might be many elements $g\in G$ that extend a partial automorphism $(D,\tau)$.

\begin{lemma}\label{lemma.extension}(Coherent choice of extensions) Let $G$ be a closed subgroup of $\Aut(T)$ satisfying Tits' independence property. For every subtree $D \subseteq T$, vertex $o$ of $D$, and partial automorphism $(D,\tau)$ of $G$, one can choose an extension $\tilde{\tau}=\tilde{\tau}(D,\tau,o)\in G$ such that the family $\{\tilde{\tau}(D,\tau,o)\}_{(D,\tau,o)}$ of these extensions is coherent in the sense that for any $C , D \subset T$ containing a common vertex $o \in T$ and any partial automorphisms $(C,\sigma), (D,\tau)$ of $G$, if for some $r \in \mathbb{N}$ we have $C \cap B(o,r)=D \cap B(o,r)$ and $\sigma\big|_{C \cap B(o,r)}=\tau\big|_{D\cap B(o,r)}$, then $\tilde{\sigma}(C,\sigma,o)\big|_{B(o,r)}=\tilde{\tau}(D,\tau,o)\big|_{B(o,r)}$.
\end{lemma}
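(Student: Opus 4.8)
The idea is to build the extension $\tilde\tau(D,\tau,o)$ vertex by vertex, moving outward from $o$ in spheres $B(o,r)$, and at each new vertex to resolve the ambiguity in the choice of extension by a fixed ``greedy'' rule governed by total orders on the relevant finite permutation groups. Coherence will then follow because the rule at each vertex depends only on data already determined within $B(o,r)$.

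First I would set up the local combinatorial data. Fix once and for all, for every vertex $v$ of $T$, a total order on the finite group $G_{v}/G_{B(v,1)}$ of ``local actions at $v$'' (equivalently, on the image of $\Stab_G(v)$ in $\Sym(E(v))$); since $T$ is locally finite there are finitely many isomorphism types, so this is a finite amount of choice data, and one can make it once. Now suppose $(D,\tau)$ is a partial automorphism extendible by some $g\in G$. I would define a nested sequence of partial automorphisms $(D_r,\tau_r)$ with $D_0 = D\cup B(o,0)$, $\tau_0 = \tau$ on $D$, and at stage $r$ enlarge $D_{r-1}$ to $D_r := D \cup B(o,r)$ as follows: for each vertex $v$ at distance $r-1$ from $o$ that lies in $D_{r-1}$, and each edge $[v,w]$ with $d(o,w)=r$ and $w\notin D_{r-1}$, the value $\tau_{r-1}$ is already defined on a neighbourhood of $v$ inside $D_{r-1}$, which pins down the image edge $[\tau_{r-1}(v), w']$; among all elements of $G_{\tau_{r-1}(v)}$ compatible with the already-chosen partial automorphism on $D_{r-1}\cap B(v,1)$, pick the one whose class in $G_{\tau_{r-1}(v)}/G_{B(\tau_{r-1}(v),1)}$ is smallest for the fixed total order, and use it to extend $\tau$ across all the new edges at $v$. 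This is where Tits' independence property enters: it guarantees that a choice made at one branch can be made independently of choices at the other branches issuing from $v$, so that the locally minimal choices at distinct vertices are simultaneously realizable by a single element of $G$ (this uses the product decomposition $G_S = \prod_{x\in S} G_{S_x}$ from \S\ref{subsub.Tits.indep} applied to the subtree $S=D_{r-1}$, extended one shell at a time). By extendibility of $(D,\tau)$ this set of compatible elements is always nonempty, so the algorithm never stalls. Taking the union over all $r$ yields a graph automorphism of $T$ lying in $G$ (closedness of $G$ is used here: it is a limit of elements of $G$ that agree on larger and larger balls), which I call $\tilde\tau(D,\tau,o)$.

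It remains to verify coherence. Suppose $(C,\sigma)$ and $(D,\tau)$ share the vertex $o$ and agree on $B(o,r)$ in the stated sense. I would prove by induction on $s\le r$ that the partial automorphisms $\sigma_s$ and $\tau_s$ produced by the algorithm agree on $B(o,s)$. The base case $s=0$ is immediate. For the inductive step, at each vertex $v$ with $d(o,v)=s-1 < r$, the induction hypothesis says $\sigma_{s-1} = \tau_{s-1}$ on $B(o,s-1)\supseteq B(v,1)\cap B(o,s-1)$, hence the set of admissible local extensions at $v$ and the total order used to select among them are literally identical in the two runs; therefore the same element is selected and $\sigma_s = \tau_s$ on $B(o,s)$. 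Pushing $s$ up to $r$ gives $\tilde\sigma(C,\sigma,o)\big|_{B(o,r)} = \tilde\tau(D,\tau,o)\big|_{B(o,r)}$, as required.

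\textbf{Main obstacle.} The delicate point is the well-definedness and simultaneity of the local greedy choices: one must check that ``the smallest admissible local action at $v$'' is genuinely well-defined (the set of admissible elements is a coset-type subset of a finite group, so it has a least element) and, more importantly, that making these minimal choices independently at all the new vertices of a given shell produces a partial automorphism that is still extendible in $G$ to the next shell. This is exactly the content of Tits' independence property in the form of \S\ref{subsub.Tits.indep}, and the bookkeeping — identifying at each vertex which subtree the half-tree decomposition is applied to, and checking that the constraints coming from $D$ itself (where $\tau$ is prescribed) are compatible with the greedy choices on the freshly added part — is the part that needs care. Once this is in place, coherence is a routine induction as sketched above.
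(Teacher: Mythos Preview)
Your approach is essentially the paper's: build $\tilde\tau$ shell by shell from $o$ by a greedy rule, then verify coherence by induction on the radius. The bookkeeping differs slightly. The paper does not select the minimal local action at each boundary vertex independently; instead it forms the set $\Sigma_n$ of \emph{globally} realizable tuples $(\pi_v)_{v\in S_n}$ (those arising from some $g\in G$ extending the current partial automorphism on all of $D_n=D\cup B(o,n)$) and takes the lexicographically smallest tuple. Thus the paper needs no independence argument in the construction, but must invoke Tits' independence in the coherence step, to show that the extra constraints coming from $C\setminus B(o,r)$ versus $D\setminus B(o,r)$ do not change $\Sigma_n$. Your version shifts Tits' independence into the construction (simultaneous realizability of the vertex-wise minima, and the equality of locally admissible and globally admissible local actions) and thereby makes the coherence induction almost formal. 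Under Tits' independence the two algorithms output the same extension.

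One small gap to patch in your coherence induction: you say that $\sigma_{s-1}=\tau_{s-1}$ on $B(o,s-1)$ suffices to make the admissible sets at $v$ coincide, but your admissible set at $v$ depends on $\tau_{s-1}$ restricted to $D_{s-1}\cap B(v,1)$, and this can include neighbours of $v$ at distance $s$ from $o$ lying in $D$ (edges of $D$ leaving $v$ outward). You also need to note that since $d(o,v)=s-1<r$ one has $B(v,1)\subset B(o,r)$, whence $C\cap B(v,1)=D\cap B(v,1)$ and $\sigma$, $\tau$ agree there by hypothesis; only then do the local data genuinely coincide. (A second cosmetic point: your total orders are on $G_v/G_{B(v,1)}$, i.e.\ on local actions of elements \emph{fixing} $v$, whereas what must be ordered are the local bijections $E(v)\to E(\tau_{r-1}(v))$ induced by $G$; the paper handles this by labelling edges and ordering $\Sym(d_i)$.)
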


\begin{proof}


Without loss of generality we assume that the valency of $o$ is $d_0$. For every $n \in \bbN$, denote by $S_n$ the set of vertices in $T$ at distance $n$ from $o \in T$. Fix a total order on $S_n$ for each $n\in \bbN$. At each vertex $v$ of valency $d_i$ color the oriented edges with origin $v$ by $1,2,\dots, d_i$ for $i=0,1$. Fix some total orders on the groups $\Sym(d_i)$. This induces lexicographic orders on $\Sym(d_i)^{S_{2n+i}}$ for $i=0,1$ and  $n\in \bbN$. Further, for every $n \in \mathbb{N}$, set $D_n:=D \cup B(o,n)$. 

By induction, we define a sequence of partial automorphisms $(D_n,\tau_n)$ of $G$ such that $\tau_n\big|_{D_j}=\tau_j$, for any $ 0 \leq j < n$ as follows: for $n=0$, define $\tau_0:D=D_0 \to T$ by $\tau_0=\tau$. Assume we constructed a partial automorphism $(D_n,\tau_n)$ of $G$. Consider the set
\[ \Sigma_n:= \left\{  (\pi_v)_{v \in S_n} \in \Sym(d_{n(mod \; 2)})^{S_n} \; \vert \; \exists g\in G \, \text{s.t.\ } g\big|_{D_n}=\tau_n, \; g_*(v)= \pi_v , \forall v\in S_n \right\} .\]
Since by induction hypothesis, $(D_n,\tau_n)$ is a partial automorphism of $G$, the finite set $\Sigma_n$ is non-empty. 
Let $(\pi_v^{\min})_{v \in S_n}$ be the minimal element of $\Sigma_n$ 
with respect to the order we fixed on $\Sym(d_{n(mod \; 2)})^{S_n}$. Now, let $\tau_{n+1}$ be the extension of $\tau_n$ to $D_{n+1}$ by the minimal element $(\pi_v^{\min})_{v \in S_n}$. This clearly defines a partial automorphism $(D_{n+1},\tau_{n+1})$ of $G$. 

Now, for each $n \in \mathbb{N}$, let $g_n \in G$ be such that $g_n\big|_{D_n}=\tau_n$. Since $G$ is closed and $D_n$'s exhaust $T$, the sequence $g_n$ converges in $G$. Define $\tilde{\tau}:=\lim_{n \to \infty} g_n$.

To prove the claim about the coherence of the family of extensions, let $(C,\sigma), (D,\tau)$ be two partial automorphisms with $o \in C \cap D$ and $r \in \mathbb{N}$ as in the statement. Let $\sigma_n$ and $\tau_n$ be the partial automorphisms defined inductively on $C_n$ and $D_n$ respectively, as in the construction above. It is enough to show  $\sigma_r|_{B(o,r)} = \tau_r|_{B(o,r)}$. We will show by induction that $\sigma_n|_{B(o,n)} = \tau_n|_{B(o,n)}$ for all $0 \leq n\leq r$. For $n=0$, it is obvious. Assume this for $n<r$. To show for $n+1$, we need to show that the set of permutations $\Sigma_n$ defined above is the same for $\tau_n$ and $\sigma_n$. This will follow if we show that for every $g\in G$ extending $\sigma_n$, there exists $h\in G$ extending $\tau_n$, so that $g|_{B(o,n+1)}=h|_{B(o,n+1)}$ (and vice-versa, i.e. when roles of $g$ and $h$ are reversed). Fix such $g$ and let $h'$ be any extension of $\tau_n$. Since $C$ and $D$ agree on $B(o,n+1)$, the element $h'^{-1}g$ stabilizes pointwise $B(o,n) \cup ((C \cup D) \cap B(o,n+1))$.  Tits' independence property guarantees that we can find $s\in G$ that acts as $h'^{-1}g$ on $B(o,n+1)$ and trivially on $D \cup B(o,n)$. Then, $h=h's$ is the desired extension of $\tau_n$.
\end{proof}

\begin{remark}\label{rem::choosing.extension}(Fixing a family of extensions)
Note, while Lemma~\ref{lemma.extension} states existence of coherent extensions, its proof in fact describes a certain algorithm for extending partial automorphisms, once the total orders and the basepoint $o$ are fixed. We will always use the coherent extension from the proof of Lemma~\ref{lemma.extension}, by explicitly specifying the choice of $o$ in each application. \end{remark}

\subsection{The maps $\Psi_\epsilon$}
\label{subsec.def_psi}
Here, using the construction of extensions in \S \ref{subsec.canonical_extension}, we construct partially defined sections for the factors in Bruhat decomposition. 

In the sequel of this section, let $G \leq \Aut(T)$ be a closed subgroup satisfying Tits' independence property and (flip), and that is $2$-transitive on $\partial T$. For $i \in \mathbb{Z}$, let $F_i
:= G_{[x_i,\xi)} \setminus G_{[x_{i-1},\xi)} \subset G_\xi^0$. These are compact subsets of $G$. To fix further notation, for an element $\epsilon \in G$, define
\begin{equation}\label{def_r(g)}
    r(\epsilon):= \sup \{ i \in \mathbb{Z} \; \vert \; \epsilon \; \text{fixes the edge} \; [x_{i-1},x_i] \},  
\end{equation}
with the convention that $\sup \emptyset =-\infty$.

Note that $r(\epsilon)\neq + \infty$ if and only if $\epsilon \notin G^0_\xi$. For the following definition, observe that for an element $\epsilon$ with $r(\epsilon) \neq \pm \infty$ and $u \in F_{r(\epsilon)+1}$, we have $u(x_{r(\epsilon)+1})=x_{r(\epsilon)+1}$ and $u(x_{r(\epsilon)}) \neq x_{r(\epsilon)}$. In particular, $x_{r(\epsilon)+1}$ belongs to the geodesic $(\xi,u\epsilon(\xi))$ and $x_{r(\epsilon)}$ does not (see Fig. 1).

\begin{definition}[The maps $\Psi_\epsilon$]\label{def.psi}
Given an element $\epsilon \in G$ with $r(\epsilon) \neq \pm \infty$ and setting $i:=r(\epsilon)+1$, define the map $$\Psi_\epsilon:F_i \to G$$ as follows:  given $u \in F_i$, let
$$D:=T_{[x_i,x_{i-1}]} \cup [x_i,\xi) \cup [x_i,u\epsilon(\xi)), \; \; \; o:= x_{i}$$
(see Definition~\ref{def::Tits_prop} for the notation  $T_{[x_i,x_{i-1}]}$) 
and consider the graph homomorphism $\tau : D \to T$ that fixes pointwise the half-tree $T_{[x_i,x_{i-1}]}$, interchanges the two rays $[x_i,\xi), [x_i,u\epsilon(\xi))$ and $\tau(x_i)= x_i$. Note that $\tau$ is a graph homomorphism because of the observation in the paragraph before the definition (see Fig. 1 for an illustration). As $G$ satisfies Tits' independence property and (flip), there exists $g \in G_{[x_{i-1},x_i]}$ such that $g\big|_D=\tau$, in other words, $(D,\tau)$ is a partial automorphism of $G$. Therefore, by Remark \ref{rem::choosing.extension}, with $o=x_i$, we get an extension $\tilde{\tau} \in G$ of $(D,\tau)$, and we set $\Psi_{\epsilon}(u):=\tilde{\tau} \in G$.
\end{definition}

The following lemma summarizes some key properties of the map $\Psi_\epsilon$:

\begin{lemma}\label{properties_psi}
The map $\Psi_{\epsilon}:F_i \to G$ satisfies the following properties for every $u \in F_i$:\\[3pt]
\indent $(a)$ $\Psi_\epsilon(u)(x_i)=x_i$,\\[2pt]
\indent $(b)$ $\Psi_\epsilon(u) $ fixes pointwise the half-tree $ T_{[x_i,x_{i-1}]}$,\\[2pt]
\indent$(c)$ $\Psi_\epsilon(u)(\xi)=u\epsilon(\xi)$ and $\Psi_\epsilon(u)(u\epsilon(\xi))=\xi$,\\[2pt] 
\indent$(d)$ For $r \in \mathbb{N}$, if the actions of $u_1, u_2 \in F_i$ agree on $B(x_i,r)$, then so do the actions of $\Psi_\epsilon(u_1)$ and $\Psi_\epsilon(u_2)$ on $B(x_i,r)$. In particular, $\Psi_\epsilon$ is continuous,\\[2pt] 
\indent$(e)$ for $u_1, u_2 \in F_i$, we have $\Psi_\epsilon(u_1)=\Psi_\epsilon(u_2)  \iff u_1\epsilon(\xi)=u_2\epsilon(\xi)$.
\end{lemma}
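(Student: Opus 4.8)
The plan is to read off (a)--(e) directly from the construction in Definition~\ref{def.psi}, the only substantial input being the coherence clause of Lemma~\ref{lemma.extension}. Fix $\epsilon$ with $r(\epsilon)\neq\pm\infty$, put $i:=r(\epsilon)+1$, and for $u\in F_i$ write $D(u):=T_{[x_i,x_{i-1}]}\cup[x_i,\xi)\cup[x_i,u\epsilon(\xi))$ and let $(D(u),\tau(u))$ be the partial automorphism of Definition~\ref{def.psi} (with basepoint $o=x_i$), so that $\Psi_\epsilon(u)=\tilde{\tau}(D(u),\tau(u),x_i)$.

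Items (a) and (b) are immediate, since $\Psi_\epsilon(u)$ extends $\tau(u)$, and $\tau(u)$ fixes $x_i$ and fixes the half-tree $T_{[x_i,x_{i-1}]}\subseteq D(u)$ pointwise. For (c), note that the rays $[x_i,\xi)$ and $[x_i,u\epsilon(\xi))$ both lie in $D(u)$ and, by the observation preceding Definition~\ref{def.psi} (which gives $x_i\in(\xi,u\epsilon(\xi))$ and $x_{i-1}\notin(\xi,u\epsilon(\xi))$), meet only in $x_i$; the map $\tau(u)$ carries each of these two rays onto the other as a path based at $x_i$, and since $\Psi_\epsilon(u)$ is a tree automorphism extending $\tau(u)$ it therefore sends the end $\xi$ (the limit of the first ray) to the end $u\epsilon(\xi)$ (the limit of the second), and vice versa.

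The heart of the matter is (d). Because $u\in F_i$ fixes $x_i$, we have the identity $[x_i,u\epsilon(\xi))=u\cdot[x_i,\epsilon(\xi))$, and, being an isometry fixing $x_i$, $u$ maps $B(x_i,r)$ onto itself. Hence if $u_1,u_2\in F_i$ agree on $B(x_i,r)$, then $u_1$ and $u_2$ carry the path $[x_i,\epsilon(\xi))\cap B(x_i,r)$ to the same path, so the initial segments of $[x_i,u_1\epsilon(\xi))$ and $[x_i,u_2\epsilon(\xi))$ inside $B(x_i,r)$ coincide. The remaining two pieces $T_{[x_i,x_{i-1}]}$ and $[x_i,\xi)$ of $D(\cdot)$ do not depend on $u$; it follows that $D(u_1)\cap B(x_i,r)=D(u_2)\cap B(x_i,r)$ and that $\tau(u_1)$ and $\tau(u_2)$---each determined by fixing $T_{[x_i,x_{i-1}]}$ pointwise and swapping the two rays vertex by vertex---agree on this common ball. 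The coherence clause of Lemma~\ref{lemma.extension}, applied with the common basepoint $o=x_i$, now gives $\Psi_\epsilon(u_1)|_{B(x_i,r)}=\Psi_\epsilon(u_2)|_{B(x_i,r)}$, which is (d); continuity of $\Psi_\epsilon$ is then immediate, since $u_n\to u$ in $F_i$ means $u_n$ eventually agrees with $u$ on every ball $B(x_i,r)$.

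Finally, (e): the implication $\Rightarrow$ follows from (c), since if $\Psi_\epsilon(u_1)=\Psi_\epsilon(u_2)$ then this common element maps $\xi$ to both $u_1\epsilon(\xi)$ and $u_2\epsilon(\xi)$. For $\Leftarrow$, if $u_1\epsilon(\xi)=u_2\epsilon(\xi)$ then $[x_i,u_1\epsilon(\xi))=[x_i,u_2\epsilon(\xi))$, so $D(u_1)=D(u_2)$ and $\tau(u_1)=\tau(u_2)$ (both data depend only on $T_{[x_i,x_{i-1}]}$ and the unordered pair of rays), whence $\Psi_\epsilon(u_1)=\Psi_\epsilon(u_2)$ because one uses the same coherent extension with the same basepoint. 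The only step that is not purely formal is (d), where one must verify the hypotheses of the coherence clause of Lemma~\ref{lemma.extension}; once the identity $[x_i,u\epsilon(\xi))=u\cdot[x_i,\epsilon(\xi))$ is in hand this is routine bookkeeping.
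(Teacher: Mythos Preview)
Your proof is correct and follows the same approach as the paper's: the paper simply records that (a)--(d) ``are satisfied by Definition~\ref{def.psi}'' and that (e) follows from (c) together with the uniqueness of the chosen extension. You have carefully unpacked exactly what this means, most notably for (d), where you make explicit that the coherence clause of Lemma~\ref{lemma.extension} is the mechanism behind the claim (via the identity $[x_i,u\epsilon(\xi))=u\cdot[x_i,\epsilon(\xi))$ and the resulting equality of $(D,\tau)$-data inside $B(x_i,r)$); this is precisely the content the paper leaves implicit.
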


\begin{proof}
The properties (a)-(d) are satisfied by Definition \ref{def.psi}. For (e), the implication ``$\Rightarrow$'' is just first part of (c). The other implication ``$\Leftarrow$'' follows from the unique choice of the extension $\Psi_{\epsilon}(u):=\tilde{\tau} \in G$.
\end{proof}

\tikzset{every picture/.style={line width=0.75pt}} 

\begin{figure}[t]
\scalebox{.85}{
\begin{tikzpicture}[x=0.75pt,y=0.75pt,yscale=-1,xscale=1]


\draw    (108.5,161) -- (451.5,162) (148.51,157.12) -- (148.49,165.12)(188.51,157.23) -- (188.49,165.23)(228.51,157.35) -- (228.49,165.35)(268.51,157.47) -- (268.49,165.47)(308.51,157.58) -- (308.49,165.58)(348.51,157.7) -- (348.49,165.7)(388.51,157.82) -- (388.49,165.82)(428.51,157.93) -- (428.49,165.93) ;

\draw    (268.5,162) -- (183.5,27) (243.27,129.44) -- (250.04,125.17)(221.42,94.74) -- (228.19,90.48)(199.58,60.04) -- (206.35,55.78) ;

\draw    (484.5,17) -- (315.5,124) -- (309,163) (452.84,41.78) -- (448.56,35.02)(419.05,63.17) -- (414.77,56.42)(385.25,84.57) -- (380.97,77.81)(351.46,105.97) -- (347.18,99.21)(317.66,127.37) -- (313.38,120.61) ;

\draw    (315.5,124) -- (293.5,24) (303,85.79) -- (310.81,84.07)(294.4,46.73) -- (302.22,45.01) ;

\draw    (410.5,94) .. controls (450.1,64.3) and (480.88,135.55) .. (423.27,135.04) ;
\draw [shift={(421.5,135)}, rotate = 361.90999999999997] [color={rgb, 255:red, 0; green, 0; blue, 0 }  ][line width=0.75]    (10.93,-3.29) .. controls (6.95,-1.4) and (3.31,-0.3) .. (0,0) .. controls (3.31,0.3) and (6.95,1.4) .. (10.93,3.29)   ;

\draw    (410.5,94) -- (405.24,97.01) ;
\draw [shift={(403.5,98)}, rotate = 330.26] [color={rgb, 255:red, 0; green, 0; blue, 0 }  ][line width=0.75]    (10.93,-3.29) .. controls (6.95,-1.4) and (3.31,-0.3) .. (0,0) .. controls (3.31,0.3) and (6.95,1.4) .. (10.93,3.29)   ;

\draw (267,180) node [scale=0.9] [align=left] {$\displaystyle x_{r(\epsilon)}$};
\draw (310,180) node [scale=0.9] [align=left] {$\displaystyle x_{i}$};
\draw (229,179) node [scale=0.9] [align=left] {$\displaystyle x_{i-2}$};
\draw (351,178) node [scale=0.9] [align=left] {$\displaystyle x_{i+1}$};
\draw (188,179) node [scale=0.9] [align=left] {$\displaystyle x_{i-3}$};
\draw (393,180) node [scale=0.9] [align=left] {$\displaystyle x_{i+2}$};
\draw (484,160) node  [align=left] {$\displaystyle \xi$};
\draw (216,20) node  [align=left] {$\displaystyle \epsilon ( \xi)$};
\draw (345,129) node [scale=0.8] [align=left] {$\displaystyle u( x_{i-1})$};
\draw (80,163) node  [align=left] {$\displaystyle \xi _{-}$};
\draw (516,33) node  [align=left] {$\displaystyle u\epsilon ( \xi)$};
\draw (326,21) node  [align=left] {$\displaystyle u( \xi _{-})$};
\draw (466,99) node   {$\tau $};
\end{tikzpicture}}
\caption{Definition of the map $\Psi_\epsilon$}
\end{figure}
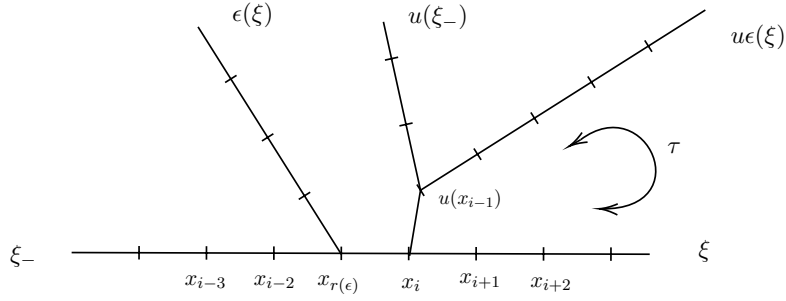

\subsection{The maps $\Phi_\epsilon$ and their properties}
\label{sec::map_Phi}
Here, using the maps $\Psi_\epsilon$ from \S \ref{subsec.def_psi}, we define the maps $\Phi_\epsilon$ that we use for change of variable in the drift argument. In particular, it is crucial to know that $\Phi_\epsilon$ is injective. Further, we identify the image of $\Phi_\epsilon$; this will be important for the calculation of the Radon--Nikodym derivative ((4) of Proposition \ref{prop.variable.change.RN}) in the following subsection (\S \ref{subsec.RN}).

In the sequel, let us fix an element $\epsilon \in G$ with $r(\epsilon) \neq \pm \infty$ as in Definition \ref{def.psi} and denote $i:= r(\epsilon)+1$. We define the map 
  \begin{equation}\label{eq.def_of_Phi}
     \Phi_\epsilon: F_i \to G,  \; \; \;  u \mapsto \Phi_\epsilon(u):= a^{-1}\Psi_\epsilon(u)^{-1}u\epsilon.
    \end{equation}

Being a composition of continuous maps,  $\Phi_\epsilon$ is continuous. In what follows, we identify the image of $\Phi_\epsilon$ and show that this map is injective. 

\subsubsection{Pinpointing the image of $\Phi_\epsilon$} To proceed, we fix an element $t_\epsilon \in G_{[x_{i-1},\xi)}$ such that 
 \begin{equation}\label{eq.def_of_t}
     t_\epsilon^{-1}([x_{i-3}, x_{i-1}])= \epsilon^{-1}([x_{i-1}, x_{i+1}]).
 \end{equation}
 
Such element $t_\epsilon$ exists due to the $2$-transitivity of $G$ on the boundary $\partial T$ (for an illustration see Fig. 2).

\tikzset{every picture/.style={line width=0.75pt}} 
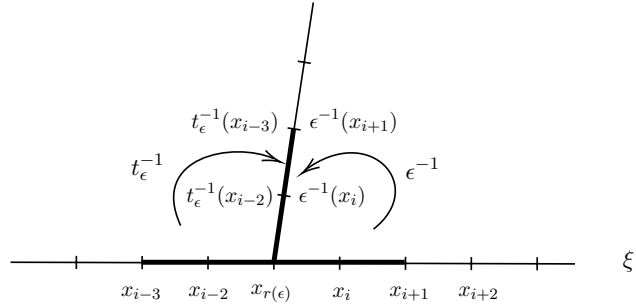
\begin{figure}[t]
\scalebox{.83}{
\begin{tikzpicture}[x=0.75pt,y=0.75pt,yscale=-1,xscale=1]

\draw    (108.5,161) -- (451.5,162) (148.51,157.12) -- (148.49,165.12)(188.51,157.23) -- (188.49,165.23)(228.51,157.35) -- (228.49,165.35)(268.51,157.47) -- (268.49,165.47)(308.51,157.58) -- (308.49,165.58)(348.51,157.7) -- (348.49,165.7)(388.51,157.82) -- (388.49,165.82)(428.51,157.93) -- (428.49,165.93) ;

\draw    (268.5,161) -- (292.5,3)(270.7,119.86) -- (278.61,121.07)(276.86,79.33) -- (284.77,80.53)(283.02,38.79) -- (290.93,40) ;

\draw    (268.5,98) .. controls (243.75,87.11) and (192.54,96.8) .. (211.89,138.72) ;
\draw [shift={(272.5,100)}, rotate = 200.97] [color={rgb, 255:red, 0; green, 0; blue, 0 }  ][line width=0.75]    (10.93,-3.29) .. controls (6.95,-1.4) and (3.31,-0.3) .. (0,0) .. controls (3.31,0.3) and (6.95,1.4) .. (10.93,3.29)   ;

\draw    (287.5,105) .. controls (326.9,75.45) and (362.42,114.79) .. (329.07,140.82) ;
\draw [shift={(285.5,107)}, rotate = 324.15999999999997] [color={rgb, 255:red, 0; green, 0; blue, 0 }  ][line width=0.75]    (10.93,-3.29) .. controls (6.95,-1.4) and (3.31,-0.3) .. (0,0) .. controls (3.31,0.3) and (6.95,1.4) .. (10.93,3.29)   ;

\draw [line width=2.25]    (280.5,80) -- (268.5,161) ;

\draw [line width=2.25]    (188,161) -- (268.5,161) ;

\draw [line width=2.25]    (268,161) -- (348,161) ;

\draw (267,180) node [scale=0.9] [align=left] {$\displaystyle x_{r(\epsilon)}$};
\draw (310,180) node [scale=0.9] [align=left] {$\displaystyle x_{i}$};
\draw (229,180) node [scale=0.9] [align=left] {$\displaystyle x_{i-2}$};
\draw (351,180) node [scale=0.9] [align=left] {$\displaystyle x_{i+1}$};
\draw (188,180) node [scale=0.9] [align=left] {$\displaystyle x_{i-3}$};
\draw (393,180) node [scale=0.9] [align=left] {$\displaystyle x_{i+2}$};
\draw (359,104) node  [align=left] {$\displaystyle \epsilon^{-1}$};
\draw (192,103) node  [align=left] {$\displaystyle t_{\epsilon }^{-1}$};
\draw (484,160) node  [align=left] {$\displaystyle \xi$};
\draw (304,120) node [scale=0.9] [align=left] {$\displaystyle \epsilon^{-1}(x_i)$};
\draw (242,120) node [scale=0.9] [align=left] {$\displaystyle t_{\epsilon}^{-1}(x_{i-2})$};
\draw (317,75) node [scale=0.9] [align=left] {$\displaystyle \epsilon^{-1}(x_{i+1})$};
\draw (245,75) node [scale=0.9] [align=left] {$\displaystyle t_{\epsilon}^{-1}(x_{i-3})$};

\end{tikzpicture}}
\caption{The element $t_\epsilon \in G_{[x_{i-1},\xi)}$}
\end{figure}

For later use, we note the following straightforward observation:

\begin{lemma} \label{lem::tig_no_matter} For any $h \in G_{[x_{i-1}, \xi)}$ satisfying $h^{-1}([x_{i-3}, x_{i-1}])= \epsilon^{-1}([x_{i-1}, x_{i+1}])$, we have $F_{i-2}  h= F_{i-2}  t_\epsilon$.
\end{lemma}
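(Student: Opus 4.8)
The plan is to reduce the claimed equality of left cosets to a single membership statement. Right‑multiplying $F_{i-2}h = F_{i-2}t_\epsilon$ by $t_\epsilon^{-1}$, the assertion becomes equivalent to $F_{i-2}\,(h t_\epsilon^{-1}) = F_{i-2}$; so it suffices to prove that $g:=h t_\epsilon^{-1}$ lies in the pointwise stabilizer $G_{[x_{i-3},\xi)}$, and that every element of this stabilizer right‑stabilizes $F_{i-2}$.

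For the first point I would argue as follows. Both $h$ and $t_\epsilon$ lie in $G_{[x_{i-1},\xi)}$ (for $h$ this is the hypothesis, for $t_\epsilon$ it is its defining property \eqref{eq.def_of_t}), hence $g = h t_\epsilon^{-1}\in G_{[x_{i-1},\xi)}$, so $g$ fixes the ray $[x_{i-1},\xi)$ pointwise. On the other hand, the hypothesis on $h$ together with \eqref{eq.def_of_t} gives $h^{-1}([x_{i-3},x_{i-1}]) = \epsilon^{-1}([x_{i-1},x_{i+1}]) = t_\epsilon^{-1}([x_{i-3},x_{i-1}])$; applying $h$ to both sides yields $g([x_{i-3},x_{i-1}]) = [x_{i-3},x_{i-1}]$, i.e. $g$ stabilizes this length‑two geodesic segment setwise. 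Since $G$ acts without edge inversion and $g$ already fixes the endpoint $x_{i-1}$, it must fix the segment $[x_{i-3},x_{i-1}]$ pointwise. Combining, $g$ fixes $[x_{i-3},x_{i-1}]\cup [x_{i-1},\xi) = [x_{i-3},\xi)$ pointwise, that is, $g\in G_{[x_{i-3},\xi)}$.

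For the second point, note that $[x_{i-2},\xi)\subset [x_{i-3},\xi)$ (the indices $x_j$ increase towards $\xi$), so $G_{[x_{i-3},\xi)}\leq G_{[x_{i-2},\xi)}$. Hence, for any $g\in G_{[x_{i-3},\xi)}$, right translation by $g$ preserves each of the groups $G_{[x_{i-2},\xi)}$ and $G_{[x_{i-3},\xi)}$, and therefore preserves their set‑theoretic difference $F_{i-2} = G_{[x_{i-2},\xi)}\setminus G_{[x_{i-3},\xi)}$ (see Example \ref{ex.F_i}). Applying this with $g = h t_\epsilon^{-1}$ gives $F_{i-2}(h t_\epsilon^{-1}) = F_{i-2}$, which is the desired identity. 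I do not expect a serious obstacle here; the only step requiring a moment's care is the elementary observation that a tree automorphism acting without inversion, setwise stabilizing a geodesic arc and fixing one of its endpoints, must fix the arc pointwise.
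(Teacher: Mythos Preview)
Your proof is correct and follows essentially the same approach as the paper: show that $ht_\epsilon^{-1}$ (equivalently its inverse $t_\epsilon h^{-1}$) lies in $G_{[x_{i-3},\xi)}$, and then use that $F_{i-2}$ is invariant under right multiplication by this subgroup. The paper phrases the second step as ``$F_{i-2}$ is a union of left cosets of $G_{[x_{i-3},\xi)}$'', which is exactly your observation that right translation by an element of $G_{[x_{i-3},\xi)}$ preserves both $G_{[x_{i-2},\xi)}$ and $G_{[x_{i-3},\xi)}$ and hence their difference; your justification of the first step is simply more detailed than the paper's.
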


\begin{proof}
Indeed, it follows from the hypothesis that $t_\epsilon h^{-1}  \in G_{[x_{i-3},\xi)}$. Therefore, since $F_{i-2}$ is a union of left cosets of $G_{[x_{i-3},\xi)}$ (see Example \ref{ex.F_i}), we have $F_{i-2}t_\epsilon h^{-1} \subset F_{i-2}$. By symmetry, we are done.
\end{proof}


The following lemma is the first step to identify the image of the map $\Phi_\epsilon$. 

\begin{lemma}
\label{lemma:properties_Phi}
 Let $t_\epsilon \in G_{[x_{i-1},\xi)}$ be such that \eqref{eq.def_of_t} is satisfied. Then 
 \[ \Phi_\epsilon (F_i) \subseteq F_{i-2}  t_\epsilon.\]
\end{lemma}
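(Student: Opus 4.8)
The plan is to unwind the definition of $\Phi_\epsilon$ and track the action of $\Phi_\epsilon(u)$ on a suitable edge near the base vertex $x_{i-1}$, showing that the image must lie in the coset $F_{i-2}t_\epsilon$. Recall $\Phi_\epsilon(u) = a^{-1}\Psi_\epsilon(u)^{-1}u\epsilon$. First I would record where each factor sends the relevant edges. By definition of $r(\epsilon)=i-1$, the element $\epsilon$ fixes the edge $[x_{i-2},x_{i-1}]$ but moves $[x_{i-1},x_i]$; so $\epsilon^{-1}([x_{i-1},x_{i+1}])$ is an edge-path starting at $x_{i-1}$ and going away from $x_i$ (and away from $\xi$), which is precisely the datum controlled by the defining equation \eqref{eq.def_of_t} of $t_\epsilon$. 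On the other side, for $u\in F_i = G_{[x_i,\xi)}\setminus G_{[x_{i-1},\xi)}$, by Lemma \ref{properties_psi}(b) the element $\Psi_\epsilon(u)$ fixes pointwise $T_{[x_i,x_{i-1}]}$, so $\Psi_\epsilon(u)^{-1}$ does too, and in particular fixes $[x_{i-1},\xi)$ pointwise together with the half-tree on the $\xi$-side of $x_i$; meanwhile $u$ fixes $[x_i,\xi)$ pointwise and $a^{-1}$ shifts the geodesic line $(\xi_-,\xi)$ by translation length $2$ towards $\xi_-$.

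Next I would verify the key claim that $\Phi_\epsilon(u) \in G_{[x_{i-1},\xi)}$, i.e. that $\Phi_\epsilon(u)$ fixes pointwise the ray $[x_{i-1},\xi)$. The point is that $u\epsilon(\xi) = \Psi_\epsilon(u)(\xi)$ by Lemma \ref{properties_psi}(c), so $\Psi_\epsilon(u)^{-1}u\epsilon$ fixes $\xi$; and one checks using (a), (b), (c) that it in fact fixes the ray $[x_i,\xi)$ pointwise and sends $x_i\mapsto x_i$, hence (after applying $a^{-1}$, which translates $x_i$ to $x_{i-2}$ along $(\xi_-,\xi)$ and fixes $\xi$) the composite $\Phi_\epsilon(u)$ fixes $\xi$ and carries $x_i$ to... — here I would be careful: actually the cleaner route is to show directly that $\Phi_\epsilon(u)$ fixes $[x_{i-1},\xi)$ pointwise by evaluating on each vertex $x_j$, $j\geq i-1$, using that $\Psi_\epsilon(u)$ fixes $T_{[x_i,x_{i-1}]}\supseteq [x_{i-1},\xi)$ and $\epsilon$ fixes $[x_{i-2},x_{i-1}]$, so $u\epsilon$ fixes $x_{i-1}$ and moves the rest predictably, and $a^{-1}$ then realigns. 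Once $\Phi_\epsilon(u)\in G_{[x_{i-1},\xi)}$ is established, it belongs to some left coset of $G_{[x_{i-3},\xi)}$, and since $F_{i-2}$ is exactly the union of the non-trivial such cosets inside $G_{[x_{i-1},\xi)}$ (Example \ref{ex.F_i}) — the trivial coset being $G_{[x_{i-3},\xi)}$ itself — it remains to exclude $\Phi_\epsilon(u)\in G_{[x_{i-3},\xi)}$ and to pin down which coset it is, namely the one containing $t_\epsilon$.

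For that last part I would compute $\Phi_\epsilon(u)^{-1}([x_{i-3},x_{i-1}])$, or equivalently $\Phi_\epsilon(u)([x_{i-3},x_{i-1}])$, and compare with $t_\epsilon$. Writing $\Phi_\epsilon(u) = a^{-1}\Psi_\epsilon(u)^{-1}u\epsilon$ and noting $\Psi_\epsilon(u)^{-1}u$ fixes the half-tree on the $x_{i-1}$-side (since both $\Psi_\epsilon(u)$ and $u$ do, up to the part $\Psi_\epsilon(u)$ does by (b) and $u$ by $u\in G_{[x_i,\xi)}$), the relevant edge-image is governed by $\epsilon$ on $[x_{i-1},x_{i+1}]$ composed with the translation $a^{-1}$; unravelling, one gets $\Phi_\epsilon(u)^{-1}([x_{i-3},x_{i-1}]) = \epsilon^{-1}(\ldots)$ matching the defining relation of $t_\epsilon$, whence by Lemma \ref{lem::tig_no_matter} (applied with $h=\Phi_\epsilon(u)$, once we check $\Phi_\epsilon(u)^{-1}([x_{i-3},x_{i-1}]) = \epsilon^{-1}([x_{i-1},x_{i+1}])$) we conclude $F_{i-2}\Phi_\epsilon(u) = F_{i-2}t_\epsilon$, and in particular $\Phi_\epsilon(u)\in F_{i-2}t_\epsilon$ — which also shows $\Phi_\epsilon(u)\notin G_{[x_{i-3},\xi)}$, as $F_{i-2}t_\epsilon$ is disjoint from $G_{[x_{i-3},\xi)}$. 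I expect the main obstacle to be the bookkeeping in this last computation: carefully keeping straight the roles of $a^{-1}$ (shifting indices by $2$ along $(\xi_-,\xi)$), of $\epsilon$ (fixing $[x_{i-2},x_{i-1}]$, moving $[x_{i-1},x_i]$), and of the section $\Psi_\epsilon(u)$ (a ``flip'' fixing $T_{[x_i,x_{i-1}]}$ and swapping $[x_i,\xi)$ with $[x_i,u\epsilon(\xi))$), so that the edge $\epsilon^{-1}([x_{i-1},x_{i+1}])$ emerges correctly and Lemma \ref{lem::tig_no_matter} applies cleanly.
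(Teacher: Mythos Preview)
Your overall strategy---trace where $\Phi_\epsilon(u)=a^{-1}\Psi_\epsilon(u)^{-1}u\epsilon$ sends the vertices near $x_{i-1}$ and compare with $t_\epsilon$---is exactly the paper's approach. But the final step, routing through Lemma~\ref{lem::tig_no_matter}, contains a genuine error in two places.

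First, the edge-path identity you propose to check, $\Phi_\epsilon(u)^{-1}([x_{i-3},x_{i-1}])=\epsilon^{-1}([x_{i-1},x_{i+1}])$, is false at the vertex $x_{i-3}$. Unwinding, $\Phi_\epsilon(u)^{-1}(x_{i-3})=\epsilon^{-1}u^{-1}\Psi_\epsilon(u)a(x_{i-3})=\epsilon^{-1}u^{-1}\Psi_\epsilon(u)(x_{i-1})=\epsilon^{-1}u^{-1}(x_{i-1})$, whereas $\epsilon^{-1}(x_{i+1})$ would require $u^{-1}(x_{i-1})=x_{i+1}$, impossible since $u$ fixes $x_{i+1}$. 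The identity does hold on the edge $[x_{i-2},x_{i-1}]$, but breaks one step further out---and that discrepancy is precisely what puts $\Phi_\epsilon(u)t_\epsilon^{-1}$ in $F_{i-2}$ rather than in $G_{[x_{i-3},\xi)}$.

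Second, even if the hypotheses of Lemma~\ref{lem::tig_no_matter} were met with $h=\Phi_\epsilon(u)$, its conclusion $F_{i-2}\Phi_\epsilon(u)=F_{i-2}t_\epsilon$ would \emph{not} give $\Phi_\epsilon(u)\in F_{i-2}t_\epsilon$. Look at the proof of that lemma: it actually shows $t_\epsilon h^{-1}\in G_{[x_{i-3},\xi)}$, hence $h\in G_{[x_{i-3},\xi)}t_\epsilon$, a coset \emph{disjoint} from $F_{i-2}t_\epsilon$. So your inference ``$F_{i-2}\Phi_\epsilon(u)=F_{i-2}t_\epsilon$, in particular $\Phi_\epsilon(u)\in F_{i-2}t_\epsilon$'' is invalid; had the hypotheses held, they would contradict the lemma you are trying to prove.

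The fix is to bypass Lemma~\ref{lem::tig_no_matter} and do what the paper does: check directly that $\Phi_\epsilon(u)t_\epsilon^{-1}$ fixes $[x_{i-2},\xi)$ pointwise but moves $x_{i-3}$. The ingredients you already assembled suffice: using $t_\epsilon^{-1}(x_{i-2})=\epsilon^{-1}(x_i)$ and $t_\epsilon^{-1}(x_{i-3})=\epsilon^{-1}(x_{i+1})$, one computes $\Phi_\epsilon(u)t_\epsilon^{-1}(x_{i-2})=a^{-1}\Psi_\epsilon(u)^{-1}u(x_i)=a^{-1}(x_i)=x_{i-2}$, while $\Phi_\epsilon(u)t_\epsilon^{-1}(x_{i-3})=a^{-1}\Psi_\epsilon(u)^{-1}(x_{i+1})=a^{-1}(u(x_{i-1}))\neq x_{i-3}$ since $u\in F_i$ moves $x_{i-1}$.
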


\begin{proof}
By definitions of the map $\Phi_\epsilon$ and the set $F_{i-2}$, one only needs to check that the product $a^{-1}\Psi_\epsilon(u)^{-1} u \epsilon t^{-1}_\epsilon$ fixes pointwise the geodesic ray $[x_{i-2}, \xi)$, but not the vertex $x_{i-3}$. This is a routine verification that follows from the choice of $t_\epsilon$ and the properties of $\Psi_\epsilon$ in Lemma~\ref{properties_psi}, see Fig. 1 and 2. 
\end{proof}

As mentioned earlier, the following result is crucial for the change of variables in the drift argument. We note that neither injectivity, nor the surjectivity of $\Phi_\epsilon$ is entirely clear from its defining formula~\eqref{eq.def_of_Phi}, especially as by (e) of Lemma \ref{properties_psi}, the map $\Psi_\epsilon$ fails to be injective.

\begin{prop}\label{phi_is_bijection}
The map $\Phi_\epsilon : F_i \to F_{i-2}  t_\epsilon$ is a  continuous bijection from the compact set $F_i$ onto $F_{i-2}  t_\epsilon$, in particular, it is a homeomorphism.
\end{prop}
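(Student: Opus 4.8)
The plan is to reduce the statement to injectivity plus surjectivity of $\Phi_\epsilon$ onto $F_{i-2}t_\epsilon$: the map $\Phi_\epsilon$ is continuous, $\Phi_\epsilon(F_i)\subseteq F_{i-2}t_\epsilon$ by Lemma~\ref{lemma:properties_Phi}, the source $F_i$ is compact, and the target $F_{i-2}t_\epsilon\subseteq G^0_\xi\subseteq G$ is Hausdorff, so a continuous bijection $F_i\to F_{i-2}t_\epsilon$ will automatically be a homeomorphism. It thus remains to establish the two set-theoretic properties.

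For injectivity I would argue as follows. If $\Phi_\epsilon(u_1)=\Phi_\epsilon(u_2)$ for $u_1,u_2\in F_i$, then cancelling $a^{-1}$ on the left and $\epsilon$ on the right in \eqref{eq.def_of_Phi} gives $\Psi_\epsilon(u_1)^{-1}u_1=\Psi_\epsilon(u_2)^{-1}u_2$. Evaluating both sides at $\xi$, using that $u_j\in G^0_\xi$ fixes $\xi$ and that, by Lemma~\ref{properties_psi}(c), $\Psi_\epsilon(u_j)^{-1}$ exchanges $\xi$ and $u_j\epsilon(\xi)$, I get $u_1\epsilon(\xi)=u_2\epsilon(\xi)$. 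Then Lemma~\ref{properties_psi}(e) yields $\Psi_\epsilon(u_1)=\Psi_\epsilon(u_2)$, and a final cancellation gives $u_1=u_2$.

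For surjectivity I would take $w\in F_{i-2}t_\epsilon$, write $w=w_0 t_\epsilon$ with $w_0\in F_{i-2}$, and set $\eta:=aw\epsilon^{-1}(\xi)\in\partial T$; this choice is forced, since applying $\Psi_\epsilon(u)^{-1}u=aw\epsilon^{-1}$ to $\xi$ (exactly as in the injectivity argument) shows that any preimage $u\in F_i$ of $w$ satisfies $u\epsilon(\xi)=aw\epsilon^{-1}(\xi)$. First I would check, by tracking the images of the vertices $x_{i-1},x_i,x_{i+1}$ successively under $\epsilon^{-1}$, then under $t_\epsilon$ (where the defining relation \eqref{eq.def_of_t} and $r(\epsilon)=i-1$ enter), then under $w_0$ (which fixes the ray $[x_{i-2},\xi)$ and moves $x_{i-3}$), and finally under $a$ (hyperbolic of translation length $2$ towards $\xi$, so $a(x_j)=x_{j+2}$), that $\eta$ is in ``general position at $x_i$'': the geodesic $(\xi,\eta)$ passes through $x_i$ but through neither $x_{i-1}$ nor $x_{i+1}$. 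Granting this, the construction of Definition~\ref{def.psi}, applied with basepoint $o=x_i$ to the partial automorphism fixing $T_{[x_i,x_{i-1}]}$ pointwise and exchanging the rays $[x_i,\xi)$ and $[x_i,\eta)$, produces a well-defined element $g_\eta\in G$ (playing for the end $\eta$ the role $\Psi_\epsilon(u)$ plays for $u\epsilon(\xi)$). I would then set $u:=g_\eta aw\epsilon^{-1}$ and verify, again by vertex-tracking: $u(\xi)=g_\eta(\eta)=\xi$ and $u(x_i)=g_\eta(aw\epsilon^{-1}(x_i))=g_\eta(x_i)=x_i$ (using $aw\epsilon^{-1}(x_i)=x_i$, which comes from \eqref{eq.def_of_t}), so $u$ fixes $[x_i,\xi)$ pointwise; while $u(x_{i-1})=g_\eta(aw\epsilon^{-1}(x_{i-1}))=g_\eta(x_{i+1})$ is the neighbour of $x_i$ lying on $[x_i,\eta)$, which is $\neq x_{i-1}$; hence $u\in F_i$. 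Finally $u\epsilon(\xi)=g_\eta(aw(\xi))=g_\eta(\xi)=\eta$, so $\Psi_\epsilon(u)=g_\eta$ by Lemma~\ref{properties_psi}(e) and Definition~\ref{def.psi}, and therefore $\Phi_\epsilon(u)=a^{-1}g_\eta^{-1}\,u\epsilon=a^{-1}g_\eta^{-1}g_\eta aw=w$. This shows $\Phi_\epsilon(F_i)=F_{i-2}t_\epsilon$ and finishes the proof.

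The main obstacle I anticipate is precisely the two rounds of vertex-tracking in the surjectivity step: locating $\eta$ relative to $x_{i-1},x_i,x_{i+1}$, and then checking $u\in F_i$. Each is a finite computation on small balls around $x_i$, but it is there that Tits' independence property and condition (flip) (already needed to make sense of $g_\eta$, cf. Definition~\ref{def.psi}) together with the relation \eqref{eq.def_of_t} defining $t_\epsilon$ must all fit together consistently; Lemma~\ref{lem::tig_no_matter} guarantees that the target set $F_{i-2}t_\epsilon$ does not depend on the particular choice of $t_\epsilon$.
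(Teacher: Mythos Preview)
Your proposal is correct and follows essentially the same route as the paper's proof: the injectivity argument is identical (rewrite $\Phi_\epsilon(u_1)=\Phi_\epsilon(u_2)$ as $\Psi_\epsilon(u_1)^{-1}u_1=\Psi_\epsilon(u_2)^{-1}u_2$, evaluate at $\xi$, and invoke Lemma~\ref{properties_psi}(c),(e)), and for surjectivity both you and the paper set $\eta=aw\epsilon^{-1}(\xi)$, build the canonical extension $g_\eta$ exactly as in Definition~\ref{def.psi}, put $u=g_\eta aw\epsilon^{-1}$, and then check $u\in F_i$ and $\Psi_\epsilon(u)=g_\eta$. The only difference is that you spell out the vertex-tracking that the paper dismisses as ``routinely verified'' (and you state the slightly sharper fact that $(\xi,\eta)$ avoids $x_{i+1}$ as well as $x_{i-1}$, which is indeed needed for $\tau$ to be a graph homomorphism).
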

\begin{proof}
Continuity of $\Phi_\epsilon$ is noted above; we only need to show its bijectivity. Let us first show the injectivity. To do this, assume that for $u_1, u_2 \in F_i$, we have $\Phi_\epsilon(u_1)=\Phi_\epsilon(u_2)$. Unfolding the definition \eqref{eq.def_of_Phi} of $\Phi_\epsilon$, this is equivalent to
\begin{equation}\label{eq.phi.bij1}
\Psi_\epsilon(u_1)\Psi_\epsilon(u_2)^{-1}=u_1 u_2^{-1}.
\end{equation}
Since $u_1u_2^{-1} \in G^0_\xi$, this gives $\Psi_\epsilon(u_1)^{-1}(\xi)=\Psi_\epsilon(u_2)^{-1}(\xi)$, and therefore by (c) from Lemma~\ref{properties_psi}, we get $u_1\epsilon(\xi) =u_2\epsilon(\xi)$. Finally, in view of (e) of Lemma~\ref{properties_psi}, this shows  $\Psi_\epsilon(u_1)=\Psi_\epsilon(u_2)$ and thus by $(\ref{eq.phi.bij1})$, $u_1=u_2$, proving injectivity.

Next, we show surjectivity. To do this, given $u' \in F_{i-2}t_\epsilon$, we construct an element $v$ in the range of $\Psi_\epsilon$ such  that $vau'\epsilon^{-1} \in F_i$. Then, setting $u:=vau'\epsilon^{-1}$, we have by $(e)$ Lemma \ref{properties_psi}, $\Psi_\epsilon(u)=v$ and by definition $(\ref{eq.def_of_Phi})$ of $\Phi_\epsilon$, $\Phi_\epsilon(u)=u'$, proving surjectivity. To construct such a $v$, notice that the geodesic between $\xi$ and $au'\epsilon^{-1}(\xi)$ passes through $x_i$, but not $x_{i-1}$ (see Fig. 3). Then, as in Definition \ref{def.psi} of $\Psi_\epsilon$, setting
$$D:= T_{[x_i,x_{i-1}]} \cup [x_i,\xi) \cup [x_i, au'\epsilon^{-1}(\xi)), \; \; \; o:= x_i,$$ and considering the graph homomorphism $\tau: D \to T$ fixing pointwise the half tree $T_{[x_i, x_{i-1}]}$, interchanging the two rays $[x_i,\xi)$, $[x_i, au'\epsilon^{-1}(\xi))$, and $\tau(x_i)= x_i$, we get a partial automorphism $(D,\tau)$ of $G$. By construction of $\Psi_\epsilon$, $(D,\tau)$ has a unique extension $v$ in the range of $\Psi_\epsilon$.

It remains to verify that $u:=vau'\epsilon^{-1} \in F_i$, i.e. $u$ fixes $[x_i,\xi)$ but not $x_{i-1}$. This is routinely verified and we are done.
\end{proof}

\tikzset{every picture/.style={line width=0.75pt}} 
\begin{figure}[t]
\scalebox{.75}{
\begin{tikzpicture}[x=0.75pt,y=0.75pt,yscale=-1,xscale=1]

\draw    (108.5,161) -- (451.5,162) (148.51,157.12) -- (148.49,165.12)(188.51,157.23) -- (188.49,165.23)(228.51,157.35) -- (228.49,165.35)(268.51,157.47) -- (268.49,165.47)(308.51,157.58) -- (308.49,165.58)(348.51,157.7) -- (348.49,165.7)(388.51,157.82) -- (388.49,165.82)(428.51,157.93) -- (428.49,165.93) ;

\draw    (268.5,162) -- (183.5,27) (243.27,129.44) -- (250.04,125.17)(221.42,94.74) -- (228.19,90.48)(199.58,60.04) -- (206.35,55.78) ;

\draw    (396.5,203) .. controls (423.37,238.82) and (368.06,297.41) .. (338.94,235.94) ;
\draw [shift={(338.5,235)}, rotate = 425.28] [color={rgb, 255:red, 0; green, 0; blue, 0 }  ][line width=0.75]    (10.93,-3.29) .. controls (6.95,-1.4) and (3.31,-0.3) .. (0,0) .. controls (3.31,0.3) and (6.95,1.4) .. (10.93,3.29)   ;

\draw    (396.5,203) -- (391.39,192.79) ;
\draw [shift={(390.5,191)}, rotate = 423.43] [color={rgb, 255:red, 0; green, 0; blue, 0 }  ][line width=0.75]    (10.93,-3.29) .. controls (6.95,-1.4) and (3.31,-0.3) .. (0,0) .. controls (3.31,0.3) and (6.95,1.4) .. (10.93,3.29)   ;

\draw    (188.5,161) -- (103.5,26) (163.27,128.44) -- (170.04,124.17)(141.42,93.74) -- (148.19,89.48)(119.58,59.04) -- (126.35,54.78) ;

\draw    (228,161) -- (254.5,192) -- (170.5,279) (257.03,188.81) -- (250.95,194)(230.14,222.99) -- (224.38,217.43)(202.35,251.77) -- (196.6,246.21)(174.57,280.54) -- (168.81,274.99) ;

\draw    (309,162) -- (335.5,193) -- (251.5,280) (338.03,189.81) -- (331.95,195)(311.14,223.99) -- (305.38,218.43)(283.35,252.77) -- (277.6,247.21)(255.57,281.54) -- (249.81,275.99) ;

\draw (267,180) node [scale=0.9] [align=left] {$\displaystyle x_{r(\epsilon)}$};
\draw (310,180) node [scale=0.9] [align=left] {$\displaystyle x_{i}$};
\draw (229,179) node [scale=0.9] [align=left] {$\displaystyle x_{i-2}$};
\draw (351,178) node [scale=0.9] [align=left] {$\displaystyle x_{i+1}$};
\draw (188,179) node [scale=0.9] [align=left] {$\displaystyle x_{i-3}$};
\draw (393,180) node [scale=0.9] [align=left] {$\displaystyle x_{i+2}$};
\draw (484,160) node  [align=left] {$\displaystyle \xi$};
\draw (216,20) node  [align=left] {$\displaystyle \epsilon ^{-1}( \xi)$};
\draw (80,163) node  [align=left] {$\displaystyle \xi _{-}$};
\draw (425,240) node   {$\tau $};
\draw (71,21) node  [align=left] {$\displaystyle t_{\epsilon } \epsilon ^{-1}( \xi)$};
\draw (121,274) node  [align=left] {$\displaystyle u' \epsilon ^{-1}( \xi)$};
\draw (330,272) node  [align=left] {$\displaystyle au'\epsilon ^{-1}( \xi)$};

\end{tikzpicture}}
\caption{Surjectivity of $\Phi_\epsilon$}
\end{figure}
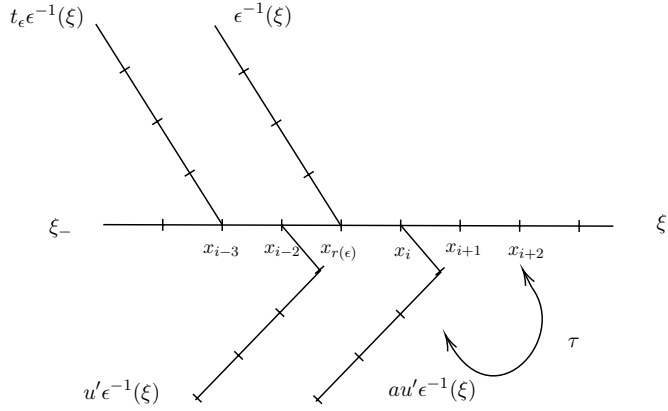

\subsection{The Radon--Nikodym derivative induced by $\Phi_\epsilon$}\label{subsec.RN}

After identifying the image of $\Phi_\epsilon$ our goal is now to understand its local behavior. In the following lemma, we identify the images of ``small" subsets of the domain, which will eventually allow us to compute the Radon--Nikodym derivative using Proposition \ref{prop.differentiation_of_measures}.

\begin{lemma}\label{images_under_phi}
For every $u \in F_i$ and positive integer $r$, we have
\[  \Phi_\epsilon(u G_{B(x_i,r), \xi}) =  G_{B(x_{i-2},r), \xi}  \Phi_\epsilon(u). \]
\end{lemma}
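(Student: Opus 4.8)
The plan is to exploit the explicit defining formula $\Phi_\epsilon(u) = a^{-1}\Psi_\epsilon(u)^{-1}u\epsilon$ together with the fine control over $\Psi_\epsilon$ supplied by parts (d) and (e) of Lemma \ref{properties_psi}. First I would fix $u \in F_i$ and $r \geq 1$ and handle the inclusion $\subseteq$. Take any $w \in G_{B(x_i,r),\xi}$; then $uw \in F_i$ (since $w$ fixes $[x_i,\xi)$ and $uw$ still moves $x_{i-1}$, as $w$ fixes it), so $\Phi_\epsilon(uw)$ makes sense. The key point is that $uw$ and $u$ agree on $B(x_i,r)$: indeed $w$ fixes $B(x_i,r)$ pointwise, so $uw|_{B(x_i,r)} = u|_{B(x_i,r)}$. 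By part (d) of Lemma \ref{properties_psi}, $\Psi_\epsilon(uw)$ and $\Psi_\epsilon(u)$ agree on $B(x_i,r)$, hence $\Psi_\epsilon(uw)^{-1}\Psi_\epsilon(u)$ fixes $B(x_i,r)$ pointwise (writing $g := \Psi_\epsilon(u)$ and noting $\Psi_\epsilon(uw) = g h$ for some $h$ fixing $B(x_i,r)$, one gets $\Psi_\epsilon(uw)^{-1} g = h^{-1}$). Now compute
\begin{equation*}
\Phi_\epsilon(uw) = a^{-1}\Psi_\epsilon(uw)^{-1}uw\epsilon = a^{-1}\Psi_\epsilon(uw)^{-1}\Psi_\epsilon(u)\,\Psi_\epsilon(u)^{-1}u\epsilon\,(\epsilon^{-1}w\epsilon) = a^{-1} h^{-1} a \cdot \Phi_\epsilon(u) \cdot (\epsilon^{-1}w\epsilon),
\end{equation*}
but this last rearrangement needs $w$ and $\Phi_\epsilon(u)$ to interact correctly, so instead I would write $\Phi_\epsilon(uw) = a^{-1}h^{-1}\Psi_\epsilon(u)^{-1}uw\epsilon$ and observe $\Psi_\epsilon(u)^{-1}uw\epsilon = \Psi_\epsilon(u)^{-1}u\epsilon \cdot \epsilon^{-1}w\epsilon = a\Phi_\epsilon(u)\cdot\epsilon^{-1}w\epsilon$, giving $\Phi_\epsilon(uw) = a^{-1}h^{-1}a\,\Phi_\epsilon(u)\,\epsilon^{-1}w\epsilon$. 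Here $a^{-1}h^{-1}a$ fixes $a^{-1}B(x_i,r) \supseteq B(x_{i-2},r)$ pointwise since $h$ fixes $B(x_i,r)$ and $a^{-1}x_i = x_{i-2}$, so $a^{-1}h^{-1}a \in G_{B(x_{i-2},r)} \cap G_\xi = G_{B(x_{i-2},r),\xi}$ (using that these elements fix $\xi$, which lies on $[x_{i-2},\xi)$).

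The remaining issue is the factor $\epsilon^{-1}w\epsilon$. This needs to be absorbed: one should check that $\Phi_\epsilon(u)\,\epsilon^{-1}w\epsilon = \Phi_\epsilon(u)$ up to a left multiple in $G_{B(x_{i-2},r),\xi}$, equivalently that $\Phi_\epsilon(u)\epsilon^{-1}w\epsilon\Phi_\epsilon(u)^{-1} \in G_{B(x_{i-2},r),\xi}$. By Lemma \ref{lemma:properties_Phi}, $\Phi_\epsilon(u) \in F_{i-2}t_\epsilon$, so $\Phi_\epsilon(u)\epsilon^{-1} \in F_{i-2}t_\epsilon\epsilon^{-1}$, and by the defining property \eqref{eq.def_of_t} of $t_\epsilon$ together with Lemma \ref{lem::tig_no_matter} one controls how $\Phi_\epsilon(u)\epsilon^{-1}$ moves the relevant ball; the cleanest route is to verify directly that $\Phi_\epsilon(u)\epsilon^{-1}$ maps $B(x_{i-2},r)$ into the part of the tree on which $w$ (conjugated appropriately) acts, using that $w$ fixes $B(x_i,r)$ and $\epsilon$ identifies neighborhoods of $x_i$ with neighborhoods of $\epsilon^{-1}(x_i)$ which $t_\epsilon$ then matches with a neighborhood of $x_{i-2}$. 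Alternatively — and this is probably the slicker argument — I would avoid the conjugation bookkeeping by using the homeomorphism statement of Proposition \ref{phi_is_bijection}: $\Phi_\epsilon$ is a continuous bijection $F_i \to F_{i-2}t_\epsilon$, both compact, and $uG_{B(x_i,r),\xi}$ is a clopen subset of $F_i$ whose $\Phi_\epsilon$-image is clopen in $F_{i-2}t_\epsilon$; then count: $uG_{B(x_i,r),\xi}$ is a coset, and one shows its image is a coset of $G_{B(x_{i-2},r),\xi}$ of the same index, so by cardinality/measure they must coincide.

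I expect the main obstacle to be the $\epsilon^{-1}w\epsilon$ term: showing it is harmless requires tracking how far $\epsilon$, $t_\epsilon$, and $\Phi_\epsilon(u)$ displace the ball $B(x_i,r)$, and the bookkeeping is exactly the ``routine verification'' the authors wave at in Lemmas \ref{lemma:properties_Phi} and \ref{phi_is_bijection}. To make it rigorous I would argue at the level of the tree: for $w \in G_{B(x_i,r),\xi}$, show $\Phi_\epsilon(u)^{-1}G_{B(x_{i-2},r),\xi}\Phi_\epsilon(u) \cap$ (the relevant subgroup) absorbs $\epsilon^{-1}w\epsilon$ by checking that $a\Phi_\epsilon(u) = \Psi_\epsilon(u)^{-1}u\epsilon$ sends the ray $[x_i,\xi)$ to $[x_i,\xi)$ (by (c) of Lemma \ref{properties_psi}) and sends $B(x_i,r)$ to a ball on which the remaining factors act trivially; concretely $\Psi_\epsilon(u)^{-1}u\epsilon(\xi) = \xi$ and one reads off from Figures 1 and 2 that $\epsilon^{-1}$ composed with $a\Phi_\epsilon(u)$ fixes $B(x_i,r)$. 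Granting the clopen-coset counting argument of the previous paragraph, the equality of the two cosets follows from their having equal (finite) index, namely $[G_{B(x_i,r),\xi} : \text{id}] = [G_{B(x_{i-2},r),\xi}:\text{id}]$ is not literally needed — what is needed is that $\Phi_\epsilon$ restricted to the coset is injective with image inside a single $G_{B(x_{i-2},r),\xi}$-coset, plus the reverse inclusion by the symmetric argument applied to $\Phi_\epsilon^{-1}$ (which by Proposition \ref{phi_is_bijection} is again continuous), at which point the two clopen cosets containing a common point must be equal. This symmetric-argument-plus-common-point strategy sidesteps any index computation and is the cleanest way to close the lemma.
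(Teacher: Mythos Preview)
There is a genuine gap in your argument for the inclusion $\subseteq$. You write $\Psi_\epsilon(uw) = \Psi_\epsilon(u)h$ with $h$ fixing $B(x_i,r)$ pointwise (correct, by (d) of Lemma \ref{properties_psi}), and then claim $a^{-1}h^{-1}a \in G_{B(x_{i-2},r),\xi}$. The element $a^{-1}h^{-1}a$ does fix $B(x_{i-2},r)$, but your parenthetical justification that it fixes $\xi$ is wrong: $h(\xi) = \Psi_\epsilon(u)^{-1}\Psi_\epsilon(uw)(\xi) = \Psi_\epsilon(u)^{-1}(uw\epsilon(\xi))$, which by (c) equals $\xi$ only when $uw\epsilon(\xi) = u\epsilon(\xi)$, i.e.\ when $w$ fixes $\epsilon(\xi)$. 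For a generic $w \in G_{B(x_i,r),\xi}$ this fails. The ``remaining issue'' you flag with $\epsilon^{-1}w\epsilon$ is the same problem in disguise: your decomposition $\Phi_\epsilon(uw) = (a^{-1}h^{-1}a)\,\Phi_\epsilon(u)\,(\epsilon^{-1}w\epsilon)$ splits a product that fixes $\xi$ into two pieces neither of which does, so showing each piece lies in $G_{B(x_{i-2},r),\xi}$ is hopeless.

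The paper avoids this by never splitting: it computes $\Phi_\epsilon(uk)\Phi_\epsilon(u)^{-1}$ directly, in which the two copies of $\epsilon$ cancel, yielding $a^{-1}\Psi_\epsilon(uk)^{-1}\,uku^{-1}\,\Psi_\epsilon(u)\,a$. The middle expression $f(k) := \Psi_\epsilon(uk)^{-1}uku^{-1}\Psi_\epsilon(u)$ fixes $B(x_i,r)$ (all four factors fix $x_i$, and one pairs (d) with normality of $G_{B(x_i,r)}$ in $G_{x_i}$) and fixes $\xi$ as a whole: $\Psi_\epsilon(u)(\xi) = u\epsilon(\xi)$, then $uku^{-1}(u\epsilon(\xi)) = uk\epsilon(\xi)$, then $\Psi_\epsilon(uk)^{-1}(uk\epsilon(\xi)) = \xi$, all by (c). This gives $\subseteq$ cleanly. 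Your alternative ``symmetric argument plus common point'' strategy still presupposes the inclusion $\Phi_\epsilon(uG_{B(x_i,r),\xi}) \subseteq G_{B(x_{i-2},r),\xi}\Phi_\epsilon(u)$, which is exactly what fails, so it does not rescue the proof. Finally, you have no argument at all for the reverse inclusion; the paper obtains it by solving the equation $k = \Psi_\epsilon(ku)k_0\Psi_\epsilon(u)^{-1}$ for $k$ via Tits' independence property and (e) of Lemma \ref{properties_psi}, and this step is not a formality.
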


\begin{proof}
The statement of the lemma is equivalent to showing that the map $\tilde{f}$ defined on $G_{B(x_{i},r), \xi}$, by $\tilde{f}(k):=\Phi_\epsilon(uk) \Phi_\epsilon^{-1}(u)$ is injective onto $G_{B(x_{i-2},r), \xi}$. The injectivity is immediate from Proposition \ref{phi_is_bijection}. Thus, we only need to identify its image with $G_{B(x_{i-2},r), \xi}$. Equivalently, we shall show that the image of the function $a\tilde{f}a^{-1}=:f$ is $G_{B(x_{i},r), \xi}$. By its definition, it is plain that $f$ takes values in $G^0_\xi$. Furthermore, unfolding the definitions, we have 
\begin{equation}\label{eq.rn11}
f(k)=\Psi_\epsilon^{-1}(uk)uku^{-1}\Psi_\epsilon(u).
\end{equation}
Since $k \in G_{B(x_{i},r), \xi}$, the actions of $uk$ and $u$ agree on $B(x_{i},r)$, hence, by $(d)$ of Lemma \ref{properties_psi} so do the actions of $\Psi_\epsilon(uk)$ and $\Psi_\epsilon(u)$. In view of $(\ref{eq.rn11})$, it follows that $f(k) \in G_{B(x_{i},r), \xi}$, i.e. $Im(f) \subseteq G_{B(x_{i},r), \xi}$.

It remains to show that the image of $f$ is equal to $G_{B(x_{i},r), \xi}$. To do this, let $k_0 \in G_{B(x_{i},r), \xi}$, we need to find an element $k \in G_{B(x_{i},r), \xi}$ with $f(k)=k_0$, i.e. $\Psi_\epsilon^{-1}(uk)uku^{-1}\Psi_\epsilon(u)=k_0$. Since $u \in G^0_\xi$ and $G^0_\xi$ normalizes $G_{B(x_{i},r), \xi}$, it is equivalent to find $k \in G_{B(x_{i},r), \xi}$ with $\Psi_\epsilon^{-1}(ku)k\Psi_\epsilon(u)=k_0$, equivalently, $k=\Psi_\epsilon(ku) k_0 \Psi_\epsilon(u)^{-1}$. This equation is seen to possess a solution by Tits' independence property: indeed, by this property, we can define an element $k \in G$ satisfying $ku\epsilon \xi =k_0 u \epsilon \xi$ and elsewhere acting as $\Psi_\epsilon(k_0u) k_0 \Psi_\epsilon(u)^{-1}$. Then, by (e) of Lemma \ref{properties_psi}, we have $\Psi_\epsilon(ku)=\Psi_\epsilon(k_0u)$ and $(d)$ of the same lemma, we have $k \in G_{B(x_{i},r), \xi}$ and we are done.
\end{proof}

Let $m_{G^0_{\xi}}\vert_{F_{i-2} t_{\epsilon}}$ be the restriction of $m_{G^0_{\xi}}$ to the compact set $F_{i-2} t_{\epsilon}=\Phi_\epsilon(F_i)$. Here, we compute the Radon--Nikodym derivative of the measure $(\Phi_\epsilon)_*( m_{G_\xi^0}\vert_{F_i})$ with respect to $m_{G^0_{\xi}}\vert_{F_{i-2} t_{\epsilon}}$.

\begin{cor}\label{Radon_Nikodym}
For $m_{G^0_\xi}$-almost every $x\in F_{i-2}   t_{\epsilon}$, we have
\begin{equation}\label{eq.rn1}
 \frac{d(\Phi_\epsilon)_*m_{G_\xi^0}}{dm_{G_\xi^0}}(x) =\frac{m_{G^0_{\xi}}(F_{n})}{m_{G^0_{\xi}}(\Phi_{\epsilon_n}(F_{n}))}.
 \end{equation}
\end{cor}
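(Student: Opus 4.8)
The plan is to prove the stronger assertion that $(\Phi_\epsilon)_*\bigl(m_{G^0_\xi}\vert_{F_i}\bigr)$ is \emph{exactly} the constant $\lambda:=m_{G^0_\xi}(F_i)/m_{G^0_\xi}(F_{i-2}t_\epsilon)$ times $m_{G^0_\xi}\vert_{F_{i-2}t_\epsilon}$; the almost-everywhere statement about the Radon--Nikodym derivative in Corollary \ref{Radon_Nikodym} follows at once, noting $F_{i-2}t_\epsilon=\Phi_\epsilon(F_i)$. Both sides are finite Borel measures on the compact set $F_{i-2}t_\epsilon=\Phi_\epsilon(F_i)$ (Proposition \ref{phi_is_bijection}) of the same total mass $m_{G^0_\xi}(F_i)$, so by Dynkin's $\pi$--$\lambda$ theorem it suffices to check the equality on a $\pi$-system generating the Borel $\sigma$-algebra of $F_{i-2}t_\epsilon$. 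I would use the right cosets $G_{B(x_{i-2},r),\xi}\,g$ with $r\geq 1$ and $g\in F_{i-2}t_\epsilon$: since $G_{B(x_{i-2},r),\xi}\subseteq G_{[x_{i-3},\xi)}$ and $F_{i-2}$ is a union of right cosets of $G_{[x_{i-3},\xi)}$ (Example \ref{ex.F_i}), each such right coset lies in $F_{i-2}t_\epsilon$; and because the $G_{B(x_{i-2},r),\xi}$ are nested compact open subgroups forming a neighbourhood basis of the identity in $G^0_\xi$, their right cosets form a $\pi$-system generating all Borel sets. Together with the whole space $F_{i-2}t_\epsilon$ this gives the required $\pi$-system.

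The heart of the argument is a single-coset computation. Fix $g=\Phi_\epsilon(u)$ with $u\in F_i$ and $r\geq 1$. Since $G_{B(x_i,r),\xi}\subseteq G_{[x_{i-1},\xi)}$ and $u\in F_i=G_{[x_i,\xi)}\setminus G_{[x_{i-1},\xi)}$, the coset $u\,G_{B(x_i,r),\xi}$ is contained in $F_i$, so by Lemma \ref{images_under_phi} and bijectivity of $\Phi_\epsilon$ we get $\Phi_\epsilon^{-1}\bigl(G_{B(x_{i-2},r),\xi}\,g\bigr)=u\,G_{B(x_i,r),\xi}$. Using left-invariance of $m_{G^0_\xi}$ on the preimage side and right-invariance (i.e.\ unimodularity of $G^0_\xi$, \S\ref{subsub.haar}) on the image side,
\[
(\Phi_\epsilon)_*\bigl(m_{G^0_\xi}\vert_{F_i}\bigr)\bigl(G_{B(x_{i-2},r),\xi}\,g\bigr)=m_{G^0_\xi}\bigl(G_{B(x_i,r),\xi}\bigr),\qquad m_{G^0_\xi}\bigl(G_{B(x_{i-2},r),\xi}\,g\bigr)=m_{G^0_\xi}\bigl(G_{B(x_{i-2},r),\xi}\bigr),
\]
neither of which depends on $g$ or $u$. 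To identify the ratio I would observe that $a^{-1}$ carries $B(x_i,r)\cup[x_i,\xi)$ onto $B(x_{i-2},r)\cup[x_{i-2},\xi)$, hence $G_{B(x_{i-2},r),\xi}=a^{-1}G_{B(x_i,r),\xi}\,a$; since conjugation by $a$ multiplies $m_{G^0_\xi}$ by the constant $(d_0-1)(d_1-1)$ (which follows from $aG_{[x_k,\xi)}a^{-1}=G_{[x_{k+2},\xi)}$ and the normalisations in \S\ref{subsub.haar}), the ratio equals $(d_0-1)(d_1-1)$ for every $r$. The same conjugation applied to $F_i$ gives $a^{-1}F_ia=F_{i-2}$ (Example \ref{ex.F_i}), whence $m_{G^0_\xi}(F_{i-2}t_\epsilon)=m_{G^0_\xi}(F_{i-2})=(d_0-1)^{-1}(d_1-1)^{-1}m_{G^0_\xi}(F_i)$, so the common ratio is precisely $\lambda$. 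Thus $(\Phi_\epsilon)_*(m_{G^0_\xi}\vert_{F_i})=\lambda\,m_{G^0_\xi}\vert_{F_{i-2}t_\epsilon}$ on every member of the $\pi$-system, and $\pi$--$\lambda$ closes the argument.

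An alternative, fitting the placement of Proposition \ref{prop.differentiation_of_measures}, is to first note that the coset computation gives $(\Phi_\epsilon)_*(m_{G^0_\xi}\vert_{F_i})\leq (d_0-1)(d_1-1)\,m_{G^0_\xi}$ — hence absolute continuity — and then to evaluate the density directly from Proposition \ref{prop.differentiation_of_measures} and Lemma \ref{images_under_phi}. The point I expect to be most delicate, in either approach, is the mismatch between the \emph{left} cosets $xG_{B(x_j,r),\xi}$ that appear in the differentiation machinery of Proposition \ref{prop.differentiation_of_measures} and the \emph{right} cosets $G_{B(x_{i-2},r),\xi}\Phi_\epsilon(u)$ that Lemma \ref{images_under_phi} actually controls: a point $x=\Phi_\epsilon(u)\in F_{i-2}t_\epsilon$ need not normalise $G_{B(x_{i-2},r),\xi}$, because the factor $t_\epsilon$ moves $x_{i-2}$, so the two families of cosets are genuinely distinct. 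My way around this is to work with right cosets throughout — still a legitimate generating $\pi$-system, and, if one insists on differentiating, still a legitimate Vitali family of compact open cosets — and to lean on the unimodularity of $G^0_\xi$ so that the denominators $m_{G^0_\xi}(G_{B(x_{i-2},r),\xi}g)$ do not feel the difference between the two sides.
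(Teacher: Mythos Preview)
Your proposal is correct. Both your main $\pi$--$\lambda$ argument and your alternative via differentiation are valid, and your coset computation is exactly the one the paper performs.

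The paper itself takes your \emph{alternative} route: it computes, via Lemma~\ref{images_under_phi} and unimodularity, that
\[
\frac{m_{G_\xi^0}\bigl(\Phi_\epsilon^{-1}(G_{B(x_{i-2},r),\xi}\Phi_\epsilon(u))\bigr)}{m_{G_\xi^0}\bigl(G_{B(x_{i-2},r),\xi}\Phi_\epsilon(u)\bigr)}=(d_0-1)(d_1-1)
\]
for every $u\in F_i$ and $r\geq 1$, observes absolute continuity, and then invokes Proposition~\ref{prop.differentiation_of_measures}. Your primary approach is more elementary: once the coset computation is in hand, the $\pi$--$\lambda$ theorem closes the argument directly, bypassing the Federer differentiation machinery entirely. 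This is a genuine simplification --- Proposition~\ref{prop.differentiation_of_measures} and the Vitali-relation setup preceding it become unnecessary for this corollary.

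You are also right to flag the left/right coset mismatch. The paper's Proposition~\ref{prop.differentiation_of_measures} is stated for left cosets $gG_{B(x_j,r),\xi}$, while the ratio it actually computes and feeds into the proposition involves the right cosets $G_{B(x_{i-2},r),\xi}\Phi_\epsilon(u)$ coming from Lemma~\ref{images_under_phi}. The paper does not comment on this. Your fix --- noting that right cosets of the same compact open subgroups form an equally good Vitali family (or, in your main argument, an equally good generating $\pi$-system) --- is the correct way to resolve it, and your $\pi$--$\lambda$ route sidesteps the issue cleanly.
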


\begin{proof}
By Lemma~\ref{images_under_phi} and unimodularity of $G_\xi^0$, for every $u\in F_i$ and positive integer $r$, we have
\[ \frac {m_{G_\xi^0}(\Phi_\epsilon^{-1}( G_{B(x_{i-2},r), \xi}\Phi_\epsilon(u)))}{m_{ G_\xi^0}( G_{B(x_{i-2},r), \xi}\Phi_\epsilon(u))} = \frac{m_{ G_\xi^0}(u G_{B(x_{i},r), \xi})}{m_{ G_\xi^0}( G_{B(x_{i-2},r), \xi}\Phi_\epsilon(u))}=(d_0-1)(d_1-1). \]

Moreover, by Lemma \ref{images_under_phi}, it is easy to see  the pushforward measure $(\Phi_\epsilon)_* m_{G_\xi^0}$  $\Phi_\epsilon$ on $F_{i-2}   t_{\epsilon}$ is absolutely continuous with respect to the measure $m_{G^0_{\xi}}\vert_{F_{i-2}   t_{\epsilon}}$. Therefore, since the right-hand side of~\eqref{eq.rn1} equals to $(d_0-1)(d_1-1)$, the claim follows by Proposition \ref{prop.differentiation_of_measures}.\end{proof}
 

\subsection{Proof of Proposition \ref{prop.variable.change.RN}}
\label{subsec::proof_change.RN}

First, note that since $\epsilon_n$ converges to identity, we have $r(\epsilon_n)\in \bbN$ for large enough $n$ (for the notation, see \S \ref{subsec.def_psi}). Up to reindexing and extracting a subsequence one can arrange $r(\epsilon_n)=n-1$. 

Recall that $F_n$ is the sequence in $\mathcal{C}$ given by $F_n=G_{[x_n,\xi)} \setminus G_{[x_{n-1},\xi)}$, where $\mathcal{C}$ is the finite collection of tempered Folner sequences, as defined in \eqref{def.folner.collection}. The maps $\Psi_{\epsilon_n}: F_{n} \to G$ and $\Phi_{\epsilon_n}: F_{n} \to  G$ are then given by Definition~\ref{def.psi} and~\eqref{eq.def_of_Phi}, respectively. Continuity of $\Psi_{\epsilon_n}$ and $\Phi_{\epsilon_n}$ as well as  injectivity of $\Phi_{\epsilon_n}$ follows from Lemma \ref{properties_psi} (d) and Proposition \ref{phi_is_bijection} respectively. \eqref{prop.variable.change.RN_1} follows by definition \eqref{eq.def_of_Phi}.  \eqref{prop.variable.change.RN_2} follows from Lemma~\ref{properties_psi} (b). \eqref{prop.variable.change.RN_4} is Corollary \ref{Radon_Nikodym}.

Finally, to see \eqref{prop.variable.change.RN_3}, recall by Lemma \ref{phi_is_bijection}  there exists $t_{\epsilon_n}\in G_{[x_{n-1},\xi)}$ such that $\Phi_{\epsilon_n}(F_n) = F_{n-2}t_{\epsilon_n}$. Without loss of generality, assume that $\epsilon_n$ contains only even indices. Up to passing to subsequence, for all $n$, the actions of the elements $a^{-n/2}t_{\epsilon_{n}} a^{n/2}$ are the same on $B(x_{-1},2)$, and let $t_0 \in C_{even}$ be an element acting in the same way on $B(x_{-1},2)$. Then by Lemma \ref{lem::tig_no_matter}, we have $\Phi_{\epsilon_{n}}(F_{n}) =F_{n-2}t_{\epsilon_{n}}= a^{n/2} F_{-2} t_0 a^{-n/2}$, and this is in fact a subsequence of an element in $\mathcal{C}$. Similar argument works if $\epsilon_n$ contains a subsequence with odd indices.

\section{Horospherical orbits in geometrically finite quotients}\label{sec.geo.fin}

The aim of this section is to prove Theorem \ref{thm.geo.fin}. In \S \ref{subsec.geo.fin}, we discuss geometrically finite lattices. In \S \ref{subsec.compact.orbits}, we characterize compact orbits and, finally, in \S \ref{subsec.density} we show that the remaining orbits are dense.

\subsection{Geometrically finite lattices in biregular trees}\label{subsec.geo.fin}

\subsubsection{Geometrically finite lattices}

A graph of groups $(\mathcal{G},\mathcal{A})$ is the data of\\
\indent - a graph $\mathcal{G}$,\\  
\indent- for every vertex $v$ of $\mathcal{G}$ a group $\mathcal{A}_v$,\\
\indent- for every edge $[v,w]$ of $\mathcal{G}$ a group $\mathcal{A}_{[v,w]}$ such that $\mathcal{A}_{[v,w]}=\mathcal{A}_{[w,v]}$,\\
\indent- for every edge $[v,w]$ an injective morphism $\rho_{[v,w]}: \mathcal{A}_{[v,w]} \to \mathcal{A}_w$.\\

For a subgroup $\Gamma$ of $G$, $\Gamma \backslash T$ denotes the quotient graph. For an edge $[v,w]$ of $\Gamma \backslash T$, taking $\mathcal{A}_v:=\Gamma_{\tilde{v}}$ and $\mathcal{A}_{[v,w]}:=\Gamma_{[\tilde{v},\tilde{w}]}$ for lifts $\tilde{v}, \tilde{w} \in T$ of the vertices $v,w$  gives the graph $\Gamma \backslash T$ a structure of graph of groups. The morphisms $\rho_{[v,w]}$ can be taken as inclusion maps (recall that $G$ acts without edge-inversion). Up to isomorphism, the structure of graph of groups is independent of choice of lifts (see \cite{bass-serre, BL}). We denote by $\Gamma \backslash \backslash T$ the corresponding graph of groups.


Below we describe an important example of graph of groups, whose underlying graph is a ray with vertices labeled by $c(i), i\in \bbN$ .

\tikzset{every picture/.style={line width=0.75pt}} 

\begin{center}
\begin{tikzpicture}[x=0.75pt,y=0.75pt,yscale=-1,xscale=1]

\draw    (140.5,120) -- (404.5,121) (210.51,116.27) -- (210.48,124.27)(280.51,116.53) -- (280.48,124.53)(350.51,116.8) -- (350.48,124.8) ;

\draw    (140.5,120) ;

\draw    (140.5,124) -- (140.5,116) ;

\draw (140,138) node [scale=0.7] [align=left] {$ $$\displaystyle c(0)$};
\draw (213,138) node [scale=0.7] [align=left] {$ $$\displaystyle c(1)$};
\draw (283,138) node [scale=0.7] [align=left] {$ $$\displaystyle c(2)$};
\draw (349,138) node [scale=0.7] [align=left] {$ $$\displaystyle c(3)$};
\draw (441,119) node  [align=left] {$\displaystyle \cdots $};
\draw (141,100) node  [align=left] {$\displaystyle \mathcal{A}_{0}^+$};
\draw (209,100) node  [align=left] {$\displaystyle \mathcal{A}_{1}$};
\draw (282,100) node  [align=left] {$\displaystyle \mathcal{A}_{2}$};
\draw (350,100) node  [align=left] {$\displaystyle \mathcal{A}_{3}$};
\draw (249,138) node  [align=left] {$\displaystyle \mathcal{A}_{1}$};
\draw (318,138) node  [align=left] {$\displaystyle \mathcal{A}_{2}$};
\draw (178,138) node  [align=left] {$\displaystyle \mathcal{A}_{0}$};

\end{tikzpicture}
\end{center}

Assume that all groups in the graph above are finite and satisfy 
\[   \mathcal{A}_0^+  > \mathcal{A}_0 \leq \mathcal{A}_1 \leq \mathcal{A}_2 \leq \mathcal{A}_3 \leq \dots ,\]
\[  [\mathcal{A}_{0}^{+} : \mathcal{A}_{0}]= d_0, \quad [\mathcal{A}_i : \mathcal{A}_{i-1}] = d_{i(mod \; 2)}-1, \; \text{for $i\geq 1$}.  \]

Such a graph of groups is called a \textbf{Nagao ray} \cites{nagao, bass-serre, BL}.

A lattice $\Gamma\leq G$ is said to be \textbf{geometrically finite} (\cite{paulin.geo.fin}) if $\Gamma \backslash \backslash T$ is a union of a finite graph of groups and a finite number of Nagao rays. 

One can show (see e.g. \cite[proof of Thm. 3.4, (1) $\Rightarrow$ (2)]{paulin.geo.fin}) that if $c:\mathbb{N} \to \Gamma \backslash T$ is a Nagao ray in $\Gamma \backslash \backslash T$ and $\tilde{c}$ is a geodesic lift in $T$ of $c$ having $\xi$ as endpoint, the stabilizer $\Gamma_\xi$ acts transitively on the horosphere based at $\xi$ and passing through $\tilde{c}(0)$. 

\subsubsection{Dynamics of ends of $T$}
Recall that the limit set of a discrete subgroup $\Gamma$ of $G$ is the unique minimal subset of $\partial T$ for the $\Gamma$-action. By Proposition \ref{prop.mostow}, the action of a lattice $\Gamma \leq G$ on $\partial T$ is minimal, in particular, its limit set is $\partial T$. 

Given a lattice $\Gamma \leq G$, an end $\xi \in \partial T$ is called \textbf{$\Gamma$-conical} if there exists a sequence of elements $\gamma_n \in \Gamma$ such that for every vertex $v \in T$ and every geodesic ray $c$ in $T$ with endpoint $\xi$, there exists a constant $C>0$ such that $d(\gamma_n v,c) \leq C$ for every $n \in \mathbb{N}$, where $d(\cdot,\cdot)$ denotes the graph distance on $T$. An end $\xi \in \partial T$ is called \textbf{$\Gamma$-bounded parabolic} if the stabilizer $\Gamma_\xi$ acts properly discontinuously and cocompactly on $\partial T \setminus \{\xi\}$.

By Paulin \cite[Theorem 3.4]{paulin.geo.fin}, for a geometrically finite lattice $\Gamma \leq G$, an end $\xi \in \partial T$ in the limit set of $\Gamma$ (which coincides with $\partial T$) is either $\Gamma$-conical or $\Gamma$-bounded parabolic. 
Denote by $p:T \to \Gamma \backslash T$ the canonical projection map. By \cite[Section 3]{paulin.geo.fin}, an end $\xi$ is $\Gamma$-bounded parabolic if and only if any geodesic ray $c$ in $T$ with endpoint $\xi$ contains a subray $c_0$ such that the restriction of $p$ to $c_0$ is an isomorphism of graphs onto its image in $\Gamma \backslash T$ and the image, endowed with its structure of graph of groups induced from $\Gamma \backslash \backslash T$, is a Nagao ray. Furthermore, by \cite[Corollary  3.5]{paulin.geo.fin} the set of distinct $\Gamma$-orbits of $\Gamma$-bounded parabolic points is in natural bijection with the set of ends of $\Gamma \backslash T$.

Finally, for a fixed vertex $v \in \Gamma \backslash T$ and a lift $\tilde{v}$ of $v$ in $T$, we define the map $p_v: G/\Gamma \to \Gamma \backslash T$ by setting $p_v(g\Gamma):=p(g^{-1}\tilde{v})$. It is easily seen that this map is well-defined, independent of the choice of $\tilde{v}$ and continuous with compact fibres.

\subsection{Characterization of compact orbits}\label{subsec.compact.orbits}

Here, we prove the part of Theorem \ref{thm.geo.fin} concerning the characterization of compact $G^0_\xi$-orbits in $X$. If $\Gamma$ is a uniform lattice in $G$,
then all orbits are dense by Corollary \ref{corol.intro.minimal}, implying part 1. of Theorem \ref{thm.geo.fin}. Part 2. holds trivially, since no such orbit is closed. 
Hence for the rest of the proof, we suppose that $\Gamma$ is a non-uniform lattice in $G$ (e.g. the quotient $\Gamma \backslash T$ contains at least one infinite ray).

\begin{proposition}\label{prop.compact.orbits} Let $x \in X$. The $G^0_\xi$-orbit of $x$ is compact if and only if $x=g\Gamma$ is such that $g^{-1}\xi$ is a $\Gamma$-bounded parabolic end.
\end{proposition}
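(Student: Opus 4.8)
The plan is to relate compactness of the $G^0_\xi$-orbit of $x = g\Gamma$ to a combinatorial condition in $\Gamma\backslash T$, using the projection maps $p_v$ and Paulin's dichotomy between conical and bounded parabolic ends. First I would set $\eta := g^{-1}\xi \in \partial T$, so that the $G^0_\xi$-orbit of $x$ corresponds, under the identification $X = G/\Gamma$, to a $g^{-1}G^0_\xi g = G^0_{\eta}$-type picture; more precisely, $G^0_\xi x$ is compact in $X$ if and only if the orbit map $G^0_\xi/(\Stab) \to X$ has relatively compact image, which by properness of the fibres of a suitable projection $p_v$ is controlled by how the horospheres based at $\eta$ project into $\Gamma\backslash T$. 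The key geometric observation is that $G^0_\xi$ acts on $T$ fixing $\xi$ and preserving each horosphere centered at $\xi$; translating by $g^{-1}$, the orbit $G^0_\xi x$ ``is'' essentially the image in $\Gamma\backslash T$ of a horosphere based at $\eta$ passing through $g^{-1}\tilde v$, together with the $\Gamma$-action.

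The forward direction: suppose $\eta = g^{-1}\xi$ is $\Gamma$-bounded parabolic. By the discussion recalled in \S\ref{subsec.geo.fin} (Paulin, and the remark that $\Gamma_\eta$ acts transitively on the horosphere based at $\eta$ through the relevant vertex of a Nagao lift), the horosphere based at $\eta$ through $g^{-1}\tilde v$ consists of a single $\Gamma_\eta$-orbit; hence its image in $\Gamma\backslash T$ is a single vertex, and pulling back through the compact-fibre map $p_v$ one concludes that $G^0_\xi x$ is contained in a compact subset of $X$. Combined with the fact that $G^0_\xi x$ is closed (which one gets because the stabilizer $g\Gamma g^{-1}\cap G^0_\xi$ is then cocompact in $G^0_\xi$, or directly from properness), one deduces that $G^0_\xi x$ is compact.

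The converse: suppose the orbit $G^0_\xi x$ is compact. By Paulin's theorem, $\eta = g^{-1}\xi$ is either $\Gamma$-conical or $\Gamma$-bounded parabolic, so it suffices to rule out the conical case. If $\eta$ were $\Gamma$-conical, then there is a sequence $\gamma_n \in \Gamma$ with $\gamma_n(g^{-1}\tilde v)$ staying within bounded distance of a geodesic ray toward $\eta$ while escaping every ball; pushing forward, the points of the horosphere through $g^{-1}\tilde v$ have images in $\Gamma\backslash T$ at unbounded distance from a basepoint, so $p_v(G^0_\xi x)$ is unbounded in $\Gamma\backslash T$, contradicting compactness of $G^0_\xi x$ (since $p_v$ is continuous with compact fibres and $\Gamma\backslash T$ is a locally finite graph). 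I expect the main obstacle to be making precise the correspondence ``$G^0_\xi$-orbit in $X$ $\longleftrightarrow$ $\Gamma$-orbits on a horosphere based at $\eta$,'' in particular showing that the image $p_v(G^0_\xi x)$ is bounded if and only if that horosphere meets only finitely many $\Gamma$-orbits of vertices, and handling the bookkeeping between left cosets $G/\Gamma$ and the $g^{-1}$-conjugated stabilizer; this is where one must invoke transitivity of $G$ on $\partial T$ (so that $G^0_\xi$ is genuinely the horospherical stabilizer and acts on horospheres with controlled orbits) together with Paulin's structural description of bounded parabolic points as exactly those whose rays project to Nagao rays.
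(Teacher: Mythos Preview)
Your forward direction is essentially the paper's: both use that for a $\Gamma$-bounded parabolic end $\eta$ the Nagao-ray structure makes $\Gamma_\eta$ transitive on the relevant horosphere, which gives cocompactness of $g\Gamma g^{-1}\cap G^0_\xi$ in $G^0_\xi$. Your detour through $p_v$ is harmless but redundant, since once you assert that the stabilizer is cocompact you are already done; that assertion is precisely what has to be proved, and the paper proves it in one line by writing $G^0_\eta \subseteq (\Gamma\cap G^0_\eta)\,G_{\tilde c(0)}$.

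The converse, however, has a real gap. From conicality you extract $\gamma_n\in\Gamma$ with $\gamma_n(g^{-1}\tilde v)$ staying within bounded distance of a ray toward $\eta$ and going to infinity. But these points $\gamma_n(g^{-1}\tilde v)$ lie along the \emph{ray} to $\eta$, not on the \emph{horosphere} $H$ through $g^{-1}\tilde v$ based at $\eta$: the $\gamma_n$ do not fix $\eta$ and hence do not preserve the Busemann function $b_\eta$, so there is no reason for $\gamma_n(g^{-1}\tilde v)$ to sit on $H$. Consequently the sentence ``pushing forward, the points of the horosphere \ldots\ have images in $\Gamma\backslash T$ at unbounded distance'' is unjustified. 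In fact the implication you want can fail outright: for a uniform lattice every end is conical, yet $\Gamma\backslash T$ is finite, so every horosphere has bounded projection under $p_v$ --- while the $G^0_\xi$-orbits are still non-compact. So boundedness of $p_v(G^0_\xi x)$ is simply not the right invariant to test compactness of the orbit.

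The paper's route for this direction is quite different and avoids horosphere projections entirely. Assuming $\eta$ is not $\Gamma$-bounded parabolic, Paulin's description (his Corollaire~3.5) says the projected ray $p\circ\tilde c$ returns to some fixed vertex $v$ infinitely often, yielding $\gamma_k\in\Gamma$ with $\gamma_k\tilde v=\tilde c(n_k)$. One then shows $G^0_\eta\cap\Gamma$ is \emph{finite}: any finite collection of elements of $G^0_\eta\cap\Gamma$ fixes a common subray $[\tilde c(m_0),\eta)$, and conjugating by a suitable $\gamma_{k_0}^{-1}$ throws them all into the finite group $G_{\tilde v}\cap\Gamma$, bounding their number. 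Since $G^0_\eta$ is non-compact and the stabilizer is finite, the orbit cannot be compact. If you want to keep your geometric picture, the missing ingredient is exactly this finiteness of the stabilizer, not anything about the image of a horosphere.
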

\begin{proof}
We first show if $g^{-1}\xi$ is a $\Gamma$-bounded parabolic end then $G^0_\xi g\Gamma$ is compact. Since $g^{-1}G^0_\xi g=G^0_{g^-1\xi}$, that is equivalent to showing that $g G^0_{g^{-1}\xi}\Gamma$, hence $G^0_{g^{-1}\xi}\Gamma$ is compact. Therefore, it suffices to fix $\xi \in \partial T$ a $\Gamma$-bounded parabolic end and show $G^0_\xi\Gamma$ is compact in $X$ or, equivalently, that $G^0_\xi \cap \Gamma$ is  cocompact in $G^0_\xi$. To do this, it suffices to find a compact set $K \subset G$ such that $G^0_\xi \subset (G^0_\xi \cap \Gamma)K$. Indeed, by \cite[Proposition 3.1.(ii)]{paulin.geo.fin}, there exists a geodesic ray $\tilde{c}$ in $T$ with endpoint $\xi$, whose projection by $p$ injects onto a geodesic ray $c$ in $\Gamma \backslash T$. Since $\Gamma$ is geometrically finite, up to shortening the ray $\tilde{c}$, we can suppose that $c$, endowed with its induced structure of graph of groups, is a Nagao ray in $\Gamma \backslash \backslash T$. For every $g \in G^0_\xi$, $g.\tilde{c}(0)$ belongs to the horosphere based at $\xi$ and passing through $\tilde{c}(0)$. Since $c$ is a Nagao ray, the group $\Gamma \cap G^0_\xi$ acts transitively on this horosphere and hence there exists $\gamma \in \Gamma \cap G^0_\xi$ such that $\gamma g. \tilde{c}(0) =\tilde{c}(0)$. This shows that $G^0_\xi \subseteq (\Gamma \cap G^0_\xi)G_{\tilde{c}(0)}$, as desired.

For the other implication, similarly, it suffices to assume $\xi \in \partial T$ is not $\Gamma$-bounded parabolic and show the orbit $G^0_\xi\Gamma$ is not compact in $X$. To do this, we show  $G^0_\xi \cap \Gamma$ is finite. Indeed, since $\xi$ is not $\Gamma$-bounded parabolic, by Paulin \cite[Corollaire 3.5]{paulin.geo.fin}, for every geodesic ray $\tilde{c}$ in $T$ with endpoint  $\xi$, the value $c(n)$ of its projection $p \circ \tilde{c}=:c$ on $\Gamma \backslash T$ equals a vertex $v$ for infinitely many $n \in \mathbb{N}$.  Let $\tilde{c}$ be such a ray, $c$ its projection, $v$ a vertex such that, up to shortening $\tilde{c}$, $c(0)=v$ and $c(n_k)=v$ with $n_k \to +\infty$, $\tilde{v}=\tilde{c}(0)$. In particular, for each $n_k \in \mathbb{N}$, there is an element $\gamma_k \in \Gamma$ such that $\gamma_k \tilde{v}=\tilde{c}(n_k)$. 

Since $\Gamma$ is discrete, $G_{\tilde{v}} \cap \Gamma$ is finite, let $N \in \mathbb{N}$ be its cardinality. Suppose for a contradiction that $G^0_\xi \cap \Gamma$ is infinite. Since for every $\gamma \in G^0_\xi \cap \Gamma$, $\gamma$ fixes a geodesic subray $[\tilde{c}(n_\gamma),\xi)$ of $\tilde{c}$, there exists $m_0 \in \mathbb{N}$ such that $|G^0_{[\tilde{c}(m_0),\xi)} \cap \Gamma| \geq N+1$. Now fix $k_0 \in \mathbb{N}$ with $n_{k_0} \geq m_0$. Then, by construction, we have $\gamma_{k_0}^{-1} (G^0_{[\tilde{c}(m_0),\xi)} \cap \Gamma) \gamma_{k_0} \subseteq G_{\tilde{v}}\cap \Gamma$, a contradiction to the cardinality assumption. Therefore $G^0_\xi \cap \Gamma$ is finite and this completes the proof.
\end{proof}

\begin{proof}[Proof of 2. of Theorem \ref{thm.geo.fin}]
Let $\eta_1,\ldots,\eta_k$ be $\Gamma$-bounded parabolic points with disjoint $\Gamma$-orbits and such that the union of their $\Gamma$-orbits exhaust the set of $\Gamma$-bounded parabolic points. By transitivity of $G$ on $\partial T$, let $g_1 \in G$ be such that $g^{-1}_1\xi=\eta_1$ and set $x_1=g_1\Gamma \in X$. By Proposition \ref{prop.compact.orbits} the $G^0_\xi$-orbit of $x_1$ is compact. Since $a$-normalizes $G^0_\xi$, the $G^0_\xi$-orbits of $a^ix_1$ are compact. Let us show that they are disjoint. Since $a$ normalizes $G^0_\xi$, it suffices to show that $a^iG^0_\xi x_1 \subseteq G^0_\xi x_1$ implies $i=0$. Therefore suppose $a^iG^0_\xi x_1 \subseteq G^0_\xi x_1$. This means for every $u \in G^0_\xi$, there exists $u' \in G^0_\xi$ and $\gamma$ such that $a^i u g_1=u' g_1 \gamma$. In other words, $g^{-1}_1 a^{i}u''g_1 \in \Gamma $ for some $u'' \in G^0_\xi$. But $g^{-1}_1 a^{i}u''g_1$ belongs to $G_{g^{-1}_1\xi}$ and since $g^{-1}_1\xi$ is $\Gamma$-bounded parabolic end, by \cite[Proposition 3.1.(i)]{paulin.geo.fin} the intersection $G_{g^{-1}\xi} \cap \Gamma$ is a parabolic group in the sense of Paulin and Bass--Lubotzky \cite[\S 3.1]{paulin.geo.fin} (see also \cite[page 49]{BL}). In particular, it consists of elliptic elements and hence $i=0$.

Moreover, if $g^{-1}\xi$ and $h^{-1}\xi$ are $\Gamma$-bounded parabolic points (say without loss of generality $g^{-1}\xi=\eta_1$ and $h^{-1}\xi=\eta_2$) with disjoint $\Gamma$-orbits, it is easily seen that $G^0_\xi g \Gamma \cap G^0_\xi h \Gamma =\emptyset$. Indeed, otherwise one can find elements $u,u' \in G^0_\xi$ and $\gamma \in \Gamma$ such that $ug=u'h\gamma$. But this means that for some $u'' \in G^0_\xi$, we have $\gamma=h^{-1}u''g$ so that $\gamma \eta_1=h^{-1}u''g \eta_1 =\eta_2$ contradicting the choice of $\eta_j$'s.

Conversely, let $x \in X$ be such that $G^0_\xi x$ is compact. We need to show that $G^0_\xi x=G^0_\xi a^j x_i$ for some $j \in \mathbb{Z}$ and $i=1,\ldots,k$ where $x_i=g_i\Gamma$ with $g_i^{-1}\xi=\eta_i$. By Proposition \ref{prop.compact.orbits}, writing $x=g\Gamma$, we have $g^{-1}\xi$ is a $\Gamma$-bounded parabolic point, so that it is equal to $\gamma \eta_i$ for some $\gamma \in \Gamma$ and $i=1,\ldots,k$. It follows that $g_i \gamma^{-1} g^{-1} \in G_\xi$, and hence it is equal to $a^{-j} u$ for some $j \in \mathbb{Z}$ and $u \in G^0_\xi$. We deduce that $G^0_\xi x=G^0_\xi a^jg_i \Gamma=G^0_\xi a^j x_i$, as required.
\end{proof}

\subsection{Density of non-compact orbits}\label{subsec.density}
Here we prove part of Theorem \ref{thm.geo.fin} concerning density of non-compact $G^0_\xi$-orbits. It will follow from the description of $a$-orbits of $x \in X$ with a non-compact $G^0_\xi$-orbit and a general result (Lemma \ref{lemma.a.rec.implies.density}) relying on mixing properties of the $a$-action.

\begin{lemma}\label{lemma.characterize.a.divergence}
A sequence $a^{-n}x$ diverges to infinity in $X$ if and only if the $G^0_\xi$-orbit of $x$ is compact.
\end{lemma}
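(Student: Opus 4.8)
The plan is to prove both implications separately, using the geometric picture of $a$ as a ``discrete geodesic flow'' along the line $(\xi_-,\xi)$ together with Paulin's dichotomy (conical vs.\ bounded parabolic) and the characterization of compact orbits in Proposition~\ref{prop.compact.orbits}. Throughout, I will repeatedly use the map $p_v\colon X\to \Gamma\backslash T$, $p_v(g\Gamma)=p(g^{-1}\tilde v)$, which is continuous and proper (compact fibres): a sequence $y_n$ in $X$ diverges to infinity if and only if $p_v(y_n)$ leaves every finite subgraph of $\Gamma\backslash T$, i.e.\ $d(c(0),p_v(y_n))\to\infty$ where $c(0)$ is the image of $\tilde v$. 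So the statement is equivalent to: the distance in $\Gamma\backslash T$ from a fixed basepoint to $p(g^{-1}a^n\tilde v)$ tends to infinity if and only if $g^{-1}\xi$ is $\Gamma$-bounded parabolic.

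First I would prove the ``only if'' direction in its contrapositive form: if $g^{-1}\xi$ is \emph{not} bounded parabolic, then $a^{-n}x$ does \emph{not} diverge. Writing $\xi':=g^{-1}\xi$, by Paulin's theorem $\xi'$ is then $\Gamma$-conical. Taking a lift $\tilde c$ of a geodesic ray to $\xi'$, conicality gives a sequence $\gamma_k\in\Gamma$ and a constant $C$ with $d(\gamma_k \tilde v, \tilde c)\le C$ for all $k$, and (by shrinking) we may assume the nearest point $\tilde c(m_k)$ satisfies $m_k\to\infty$. Now the vertices $g^{-1}a^{-n}\tilde v$ run, up to bounded distance and up to the action of the elliptic part $M$ and of $G^0_{\xi'}$, along the geodesic ray $[\,\cdot\,,\xi')$; more precisely, one checks that $g^{-1}a^{-n}\tilde v$ lies within a bounded neighbourhood of $\tilde c$ and marches off towards $\xi'$. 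Combining with the $\gamma_k$'s: for a suitable subsequence $n_k$, $\gamma_k g^{-1}a^{-n_k}\tilde v$ stays in a bounded neighbourhood of $\tilde v$, hence $p(g^{-1}a^{-n_k}\tilde v)=p(\gamma_k g^{-1}a^{-n_k}\tilde v)$ stays in a fixed finite subgraph. Thus $p_v(a^{-n_k}x)$ does not diverge, so $a^{-n}x$ does not diverge to infinity. (One should be slightly careful that ``$a^{-n}$ moves $\tilde v$ towards $\xi'$'' holds only after translating $\tilde v$ by $g$ so that $g\tilde v$ sits near the axis of $a$; since $g$ is fixed this only costs a bounded error.)

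For the ``if'' direction, suppose $G^0_\xi x$ is compact; by Proposition~\ref{prop.compact.orbits}, $\xi':=g^{-1}\xi$ is $\Gamma$-bounded parabolic, and $\Gamma_{\xi'}\cap G^0_{\xi'}$ is cocompact in $G^0_{\xi'}$. I want to show $p(g^{-1}a^{-n}\tilde v)$ leaves every finite subgraph. By Paulin's description, some geodesic subray $[\tilde c(0),\xi')$ injects under $p$ onto a Nagao ray $c$ in $\Gamma\backslash T$, and on this ray the combinatorial distance from $c(0)$ is a proper function. The key point is that the $G^0_{\xi'}$-orbit of $\tilde v$ meets a \emph{compact} set of horospheres (by cocompactness of $\Gamma_{\xi'}\cap G^0_{\xi'}$, together with the horosphere-transitivity of $\Gamma_{\xi'}$ on the Nagao ray), so that modulo $\Gamma$ the vertex $g^{-1}\tilde v$ may be taken to lie on $\tilde c$ at bounded distance from $\tilde c(0)$; then $g^{-1}a^{-n}\tilde v$ moves towards $\xi'$ along (a bounded neighbourhood of) $\tilde c$, so its Busemann value at $\xi'$ goes to $-\infty$, and since $p$ is injective on the subray and the image is a Nagao ray, $p(g^{-1}a^{-n}\tilde v)$ escapes to the end of $\Gamma\backslash T$. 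Hence $p_v(a^{-n}x)\to\infty$ and $a^{-n}x$ diverges.

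The main obstacle I anticipate is bookkeeping the discrepancy between the element $a$ (whose axis is the fixed line $(\xi_-,\xi)\subset T$) and the coset representative $g$: all the geometric statements about ``$a^{-n}$ pushing a vertex towards $\xi$'' are clean for vertices near the axis of $a$, and one must translate them, at the cost of a bounded additive error depending only on $g$, into statements about the ray $[\,\cdot\,,g^{-1}\xi)$. The cleanest way to organize this is to fix once and for all a vertex on the axis of $a$, say $x_0$, use $p_{x_0}$ (equivalently $p_v$ for $v=p(x_0)$), and reduce everything to the behaviour of the Busemann function $\beta_\xi$ along the $a$-orbit of $g x_0$, which is exactly linear in $n$ by the choice of translation length $2$; then the compact/non-compact dichotomy for $G^0_\xi$-orbits translates, via Paulin, into whether or not the relevant horoball-based ray injects into $\Gamma\backslash T$. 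The rest is an application of properness of $p_v$ and of the Nagao-ray structure, plus the already-proved Proposition~\ref{prop.compact.orbits}.
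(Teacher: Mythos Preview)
Your approach is correct and essentially the same as the paper's: both reduce the question to the behaviour of the projection $p_v$ along the axis of the hyperbolic element, invoke Proposition~\ref{prop.compact.orbits} to pass to Paulin's dichotomy, and then use Paulin's structural results (\cite[Prop.~3.1, Cor.~3.5]{paulin.geo.fin}) to see that the projected ray escapes along a Nagao ray in the bounded parabolic case and recurs to a fixed vertex otherwise. Two small remarks: first, there is a sign slip --- since $p_v(a^{-n}g\Gamma)=p(g^{-1}a^{n}\tilde v)$, it is $g^{-1}a^{n}\tilde v$ (not $g^{-1}a^{-n}\tilde v$) that marches towards $\xi'=g^{-1}\xi$; second, the ``bookkeeping'' you worry about evaporates if you set $\tau:=g^{-1}ag$ and choose $\tilde v$ on the axis of $a$, for then $g^{-1}a^{n}\tilde v=\tau^{n}(g^{-1}\tilde v)$ lies exactly on the axis of $\tau$, which is the clean formulation the paper uses.
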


\begin{proof}
Suppose $G^0_\xi x$ is compact in $X$. Denote $x=g\Gamma$. By Proposition \ref{prop.compact.orbits}, $g^{-1}\xi$ is a $\Gamma$-bounded parabolic end. To show that $a^{-n}g\Gamma$ diverges to infinity is equivalent to showing that $\tau^{-n}\Gamma$ diverges to infinity, where $\tau=g^{-1}ag$. To see this latter, by continuity, it suffices to show that for some vertex $v \in \Gamma \backslash T$, $p_v(\tau^{-n}\Gamma)$ diverges to infinity in $\Gamma \backslash T$. Let $\tilde{c}$ be a geodesic subray of the translation axis of $\tau$ pointing towards $g^{-1}\xi$. Denote $\tilde{c}(0)=\tilde{v}$. Since $g^{-1}\xi$ is $\Gamma$-bounded 
parabolic, by \cite[Proposition 3.1.(ii)]{paulin.geo.fin}, up to shortening $\tilde{c}$, the restriction of $p:T \to \Gamma \backslash T$ is an isomorphism onto a ray $c$ converging to an 
end $b_{g^{-1}\xi} \in \partial (\Gamma \backslash T)$. Let $c(0)=:v$. By definition of the map $p_v:G/\Gamma \to \Gamma \backslash T$, since $c$ is isomorphic to $\tilde{c}$ and $\tilde{c}$ is a geodesic subray of the translation axis of $\tau$, 
we have $p_v(\tau^{-n}\Gamma)=c(2n)$ (recall that $a$ hence $\tau$ has translation distance $2$). Therefore, $p_v(\tau^{-n}\Gamma) \to b_{g^{-1}\xi}$.

Conversely, suppose $G^0_\xi x$ is not compact. Denote $x=g\Gamma$. By Proposition \ref{prop.compact.orbits} $g^{-1}\xi$ is not a $\Gamma$-bounded parabolic end. Similarly to above, it suffices to show that for some sequence integers $n_k \to + \infty$, $\tau^{-n_k}\Gamma$ belongs to a compact subset of $X$. Since for any vertex $v \in \Gamma \backslash T$, the map $p_v:G/\Gamma \to \Gamma \backslash T$ has compact fibres, it suffices to find such a sequence $n_k$ and a vertex $v \in \Gamma \backslash T$ such that $p_v(\tau^{-n_k}\Gamma)=v_0$ for a vertex $v_0$ of $\Gamma \backslash T$ and for every $k \in \mathbb{N}$. This follows from \cite[Corollaire 3.5]{paulin.geo.fin} as in the proof of Proposition \ref{prop.compact.orbits}.
\end{proof}

The following result provides a criterion for density of non-compact $G_\xi^0$-orbits. We emphasize that in its statement, $\Gamma$ is not assumed to be geometrically finite. Recall that $M=G_{\xi_-}^0 \cap G_\xi^0$.

\begin{lemma}\label{lemma.a.rec.implies.density}
Let $G$ be a non-compact, closed subgroup of $\Aut(T)$ that acts transitively on $\partial T$. Let $\Gamma$ be a lattice in $G$ and $x \in X=G/\Gamma$. Suppose that there exist a compact set $K \subset X$, a sequence of integers $n_k \to + \infty$ such that $a^{-n_k}x \in K$ for every $k \geq 1$. Assume, moreover, that the action of $a$ on $(X,m_X)$ is mixing.  Then, there exists a compact open neighborhood $O^+$ of identity in $G^0_\xi$ such that for every $M$-invariant function $\theta \in C_c(X)$,  up to passing to a subsequence of $n_k$, we have
\begin{equation}\label{eq.dense.1}
\frac{1}{m_{G^0_\xi}(a^{n_k}O^+ a^{-n_k})} \int_{a^{n_k}O^+ a^{-n_k}} \theta(ux) dm_{G^0_\xi}(u) \underset{k \to +\infty}{\longrightarrow} \int \theta(y) dm_X(y).
\end{equation}
\end{lemma}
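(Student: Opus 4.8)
The strategy is a Margulis-style orbit-thickening argument, closely parallel to the computation in the proof of Proposition \ref{prop.hyp.unique.ergo}, but now exploiting the recurrence hypothesis $a^{-n_k}x \in K$ instead of compactness of $X$. The key point is that for fixed $\ell$, the set $a^\ell O^- a^{-\ell}$ stays inside a fixed compact piece of $G^0_{\xi_-}$ (it contracts toward $M$), so a thin thickening of the orbit $G^0_\xi x$ by a neighborhood of identity in $G$ becomes, after pushing by $a^{n_k}$, a large Følner piece of $G^0_\xi$ times a small (but not shrinking) piece of $G^0_{\xi_-}$. Here $O^+$ will be a fixed compact open neighborhood of identity in $G^0_\xi$ that is $M$-invariant — concretely one can take $O^+ = G_{[x_0,\xi)}$, so that $a^{n}O^+a^{-n} = G_{[x_{2n},\xi)}$ is a tempered Følner sequence by Lemma \ref{lem::general_Folner}.

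First I would fix an $M$-invariant $\theta \in C_c(X)$ and $\varepsilon > 0$. Using the compact set $K$ and uniform continuity of $\theta$, I would choose a compact open $M$-invariant neighborhood of identity $W = G_{[x_{-2k},x_{2k}]}$ in $G$ (for $k$ large) such that $|\theta(wz) - \theta(z)| < \varepsilon$ for all $w \in W$ and all $z$ in the $W$-neighborhood of $K$; the $M$-invariance of $\theta$ plus Lemma \ref{lem::some_little_facts}.(3) lets one reduce to a genuine neighborhood of identity in $G$, exactly as in \eqref{eq.unique.ergo1}. Next, I would introduce the normalized indicator $\phi_z := m_X(Wz)^{-1}\mathbbm{1}_{Wz}$; since the sets $Wz$ are compact open, $z \mapsto \phi_z$ is locally constant, hence $\{\phi_z : z \in K\}$ is a \emph{finite} subset of $L^2(X,m_X)$ (or at least relatively compact). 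Applying the mixing hypothesis for the $a$-action to the inner product $\langle \phi_z, a^{\ell}\tilde\theta\rangle$ and using compactness of $\{\phi_z\}$ to make the convergence uniform in $z$, I get: for $\ell$ large,
\[
\left| \int_X \phi_z(a^{-\ell}y)\theta(y)\,dm_X(y) - \int_X \theta\,dm_X \right| < \varepsilon
\]
for all $z \in K$ (using $\int \phi_z = 1$ and that $\int \tilde\theta\,dm_X = \int\theta\,dm_X$ by $G$-invariance of $m_X$ and $M$-invariance, cf.\ \eqref{eq.unique.ergo3}; since $\theta$ is already $M$-invariant, $\tilde\theta = \theta$).

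Then I would unfold the left-hand side exactly as in the chain \eqref{eq.unique.ergo5}: lift the integral over $Wz$ from $X$ to $G$ (injectivity of $w \mapsto wz$ on $W$ is needed here — this is where I would have to be a little careful since $\Gamma$ is not assumed torsion-free; but one can shrink $W$ further, or replace $W$ by $W_{v_0}$ for the recurrence vertex so that the relevant stabilizer intersection with $\Gamma$ is trivial, or simply absorb the finite multiplicity into the constant), change variables by $a^\ell$ using unimodularity of $G$, and apply the product-structure formula (Lemma \ref{lemma.Haar.VAU}) to the decomposition $a^\ell W a^{-\ell} = a^\ell G_{[x_{2k},\xi_-)}a^{-\ell}\cdot a^\ell G_{[x_{-2k},\xi)}a^{-\ell} = G_{[x_{2(k+\ell)},\xi_-)}G_{[x_{-2(k-\ell)},\xi)}$ (Lemma \ref{lem::some_little_facts}.(1)). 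The $G^0_{\xi_-}$-integration runs over $G_{[x_{2(k+\ell)},\xi_-)} \subset W$, a set whose $\theta$-oscillation is $<\varepsilon$ by the choice of $W$, so it can be collapsed; and the $G^0_\xi$-integration runs over $G_{[x_{-2(k-\ell)},\xi)} = a^{\ell-k}O^+a^{-(\ell-k)}$, which is a Følner piece. Specializing $z = a^{-\ell}x = a^{-n_k}x \in K$ (with $\ell = n_k$, so $O^+$-thickening width $n_k - k$) and letting $k \to \infty$ yields \eqref{eq.dense.1}.

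The main obstacle I expect is precisely the injectivity/lifting issue in step three: without the torsion-free hypothesis, the map $w \mapsto wx$ on the thickening set $W$ need not be injective, so lifting the integral from $X$ to $G$ picks up a bounded multiplicity. I would handle this by noting that this multiplicity equals $|g^{-1}Wg \cap \Gamma|$ for $x = g\Gamma$, which is the cardinality of a finite group (discreteness of $\Gamma$, compactness of $W$); it is locally constant in $x$ and, crucially, cancels between numerator and denominator since it appears identically in the Haar-measure normalization of $Wz$. Alternatively — and this is probably cleaner — one works with a vertex stabilizer instead of a ball stabilizer: choose $W = G_{B(v_0, N)}$ for a suitable vertex and appeal to discreteness to ensure $\Gamma \cap g^{-1}Wg$ is trivial for the relevant $x$. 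Either way the argument goes through and the rest is the bookkeeping already carried out in Section \ref{sec.unique.ergo}.
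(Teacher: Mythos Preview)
Your approach is essentially the paper's orbit-thickening argument, and the overall structure (thicken by a $G^0_{\xi_-}$--$G^0_\xi$ box, apply mixing uniformly over the compact recurrence set, unfold via the product structure, collapse the $G^0_{\xi_-}$-factor) is correct. Two points, however, deserve attention.

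First, you invoke Lemma~\ref{lem::some_little_facts}(3) to get $|\theta(wz)-\theta(z)|<\varepsilon$ for all $w\in W$. That lemma is proved using Tits' independence property, which Lemma~\ref{lemma.a.rec.implies.density} does \emph{not} assume (and cannot, since it is used in the proof of Theorem~\ref{thm.geo.fin}, which applies e.g.\ to linear groups). The paper avoids this: it fixes $O^+,O^-$ once, and only \emph{after} the change of variables observes that $a^{n_k}O^-a^{-n_k}\subseteq UM$ for any neighborhood $U$ of identity (this is the general contraction argument from the proof of Lemma~\ref{lem::general_Folner}, which needs no Tits' independence). Combined with $M$-invariance of $\theta$ and uniform continuity, this collapses the $G^0_{\xi_-}$-integral for all $y\in X$, not just for $y$ near $K$. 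Your front-loading of the oscillation estimate is what forces the appeal to Lemma~\ref{lem::some_little_facts}(3); back-loading it as the paper does removes the dependence.

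Second, because your $k$ is chosen from $\theta$ and $\varepsilon$, the F{\o}lner pieces you end up averaging over are $G_{[x_{2(n_j-k)},\xi)}=a^{n_j-k}O^+a^{-(n_j-k)}$, not $a^{n_j}O^+a^{-n_j}$; equivalently your effective $O^+$ depends on $\theta$, which does not match the quantifier order in the statement ($\exists\,O^+\;\forall\,\theta$). The paper fixes this by choosing $O^+,O^-$ \emph{only} from $K$: lift $K$ to a compact $\hat K\subset G$ with $\hat K^{-1}\hat K\cap\Gamma=\{\id\}$ (possible by discreteness of $\Gamma$), then take $O^\pm$ small enough that $\hat K^{-1}O^-(O^+)^2O^-\hat K\cap\Gamma=\{\id\}$. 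This simultaneously gives injectivity of $g\mapsto gz$ on $O^+O^-$ for every $z\in K$ (so no multiplicity bookkeeping is needed) and makes $O^+$ uniform in $\theta$.

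Your multiplicity-cancellation workaround for injectivity is in fact correct when $W$ is a compact open subgroup, but the paper's lift-of-$K$ trick is what makes $O^+$ independent of $\theta$ and is the cleaner route here.
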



The proof of the previous lemma is along the same lines as the proof of Proposition \ref{prop.hyp.unique.ergo}. We provide a brief argument.

\begin{proof}[Proof of lemma \ref{lemma.a.rec.implies.density}]
Up to passing to a subsequence of $n_k$ and shrinking $K$, we can find a compact $\hat{K} \subset G$ such that $\hat{K}^{-1}\hat{K} \cap \Gamma =\{\id\}$ and $\pi(\hat{K}) \supseteq K$, where $\pi: G \to X$ is the quotient map. Denote by $\xi_- \in \partial T$ the repelling fixed point of $a$ and let $O^-$ and $O^+$ be compact open symmetric neighborhoods of identity, respectively, in $G^0_{\xi_-}$ and $G^0_\xi$ such that $\hat{K}^{-1}O^-(O^+)^2O^-\hat{K} \cap \Gamma =\{ \id \}$. As in the proof of Theorem \ref{thm.unique.ergo}, for $z \in K$ and $y \in X$ set
\[
\phi_z(y)=\frac{1}{m_X(O^+O^-z)} \mathbbm{1}_{O^+O^-z}(y).
\]
The map $z \mapsto \phi_z$ is continuous as a map from $K$ to $L^2(X,m_X)$. In particular, since $K$ is compact, $\{\phi_z \, | \, z\in K\}$ is a compact subset of $L^2(X,m_X)$.

Let $\theta \in C_c(X)$ and $\varepsilon>0$ be given. Since the $a$-action on $(X,m_X)$ is mixing, for every $z \in K$, we have
\begin{equation}\label{eq.dense.2}
\int_X \phi_z(a^{-n_k} y) \theta(y)dm_X(y) \underset{k \to \infty}{\longrightarrow} \int_X \phi_z(y)dm_X(y)  \int_X \theta(y)dm_X(y),
\end{equation}
where the right-hand side is equal to $\int_X \theta(y) dm_X(y)$ by definition of $\phi_z$. Moreover, since $\{\phi_z \, | \, z\in K\}$ is  compact in $L^2(X,m_X)$, this convergence is uniform in $z \in K$. Let $k \in \mathbb{N}$ be large enough so that the left-hand side is within $\varepsilon$ of $\int_X \theta(y) dm_X(y)$ for every $z \in K$. 

As in $(\ref{eq.unique.ergo5})$, by the choice of $O^-$ and $O^+$ lifting the integral to $G$, and using the product structure the left-hand side of $(\ref{eq.dense.2})$ is equal to
\begin{equation}\label{eq.dense.3}
\frac{1}{m_{G^0_{\xi_-}}(O^-)m_{G^0_\xi}(O^+)} \int\limits_{a^{n_k}O^+ a^{-n_k}} \int\limits_{a^{n_k}O^- a^{-n_k}} \theta(vua^{n_k}z) dm_{G^0_{\xi_-}}(v) dm_{G^0_\xi}(u). 
\end{equation}

By uniform continuity of $\theta$ we can choose a neighborhood of identity $U \subset G_{\xi_-}^0$ such that $|\theta(vw)-\theta(w)|< \varepsilon$ for every $v\in U$ and every $w\in X$. Then for all $k$ large enough 
$a^{n_k}O^-a^{-n_k} \subseteq UM$ (see the proof of Lemma \ref{lem::general_Folner}).  Since $\Delta_{G_\xi}(a)\Delta_{G_{\xi_-}}(a)=1$, where these stand for the corresponding modular functions, by uniform continuity of $\theta$, the quantity in the previous displayed equation is within $\varepsilon$ of $(\ref{eq.dense.4})$, for $k$ large enough and for every $z \in K$:

\begin{equation}\label{eq.dense.4}
\frac{1}{m_{G^0_\xi}(a^{n_k}O^+a^{-n_k})} \int_{a^{n_k}O^+ a^{-n_k}} \theta(ua^{n_k}z) d m_{G^0_\xi}(u).
\end{equation}

Since $a^{-n_k}x \in K$ by choice of $n_k$, $(\ref{eq.dense.1})$ follows by taking $z=a^{-n_k}x$ in the previous equation.
\end{proof}

\begin{proof}[Proof of 1. of Theorem \ref{thm.geo.fin}]
Let $x \in X$ be such that $G^0_\xi x \subset X$ is not compact. By Lemma \ref{lemma.characterize.a.divergence}, there exist a compact set $K \subset X$ and a sequence of integers $n_k \to +\infty$ such that $a^{-n_k}x \in K$ for every $k \geq 1$. 

Let $O$ be a compact open subset of $X$. We apply  
Lemma \ref{lemma.a.rec.implies.density} to the $M$-invariant function $\mathbbm{1}_{MO} \in C_c(X)$, the group $G$ and the hyperbolic element $a$. Note that the action of $a$ is mixing, since $G$ has the Howe--Moore property. We deduce that for some compact open neighborhood $O^+$ of identity in $G^0_\xi$ and up to passing to a subsequence of $n_k$, we have 
$$
\frac{1}{m_{G^0_\xi}(a^{n_k}O^+ a^{-n_k})} \int_{a^{n_k}O^+ a^{-n_k}} \mathbbm{1}_{MO}(ux) dm_{G^0_\xi}(u) \underset{k \to +\infty}{\longrightarrow} m_X(MO).
$$
Since $m_X(MO)>0$, it follows from this convergence that there exists $u \in G^0_\xi$ with $ux \in MO$. Since $M\le G_\xi^0$ this concludes the proof.
\end{proof}


\section{Equidistribution of large compact orbits}\label{sec.equidist}

Here, we prove Proposition \ref{prop.equidist.compact.orbits}. Recall, $G$ is a non-compact, closed, topologically simple subgroup of $\Aut(T)$ acting transitively on $\partial T$ and satisfying Tits' independence property, and $\Gamma$ is a lattice in $G$.

We first record an observation on the $\Gamma$-action on $\partial T$ that gives topological information on the union of $a$-translates of a compact $G_\xi^0$-orbit.
The following result is due to Mostow when $G$ is replaced by a semisimple Lie group with finite center and without compact factors, $G_\xi$ by a parabolic subgroup $P$ and $\partial T$ by the flag manifold $G/P$. Due to similar group decompositions (\S \ref{subsub.decompositions}) and the Howe--Moore property, Mostow's original proof \cite[Lemma 8.5]{mostow} applies mutatis mutandis.


\begin{proposition}\label{prop.mostow}
Let $G$ be a non-compact, closed, topologically simple subgroup of $\Aut(T)$ acting transitively on $\partial T$. Then, for any lattice $\Gamma$ in $G$, the $\Gamma$-action on $\partial T$ is minimal.
\end{proposition}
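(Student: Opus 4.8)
The plan is to adapt Mostow's argument for \cite[Lemma~8.5]{mostow}. Using that $G$ acts transitively on $\partial T$, identify $\partial T$ with $G/G_\xi$; it then suffices to show that every nonempty closed $\Gamma$-invariant subset $F\subseteq\partial T$ is all of $\partial T$, which is exactly minimality. Fix a Borel probability measure $\nu$ on $\partial T$ in the natural ($G$-quasi-invariant) measure class; since $G$ acts transitively on the compact space $\partial T$, the measure $\nu$ has full support.

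First I would show that the $\Gamma$-action on $(\partial T,\nu)$ is ergodic. A $\Gamma$-invariant Borel set $A\subseteq\partial T=G/G_\xi$ lifts to a left-$\Gamma$-, right-$G_\xi$-invariant subset of $G$, hence descends to a right-$G_\xi$-invariant Borel set $\bar A\subseteq\Gamma\backslash G$, and the two measure classes correspond under this bijection. Since $\Gamma$ is a lattice, $\Gamma\backslash G$ carries a finite $G$-invariant measure $m$; since $G$ has the Howe--Moore property (\cites{BM00a,Cio}) and $G_\xi$ is non-compact (it contains the hyperbolic element $a$), the right $G_\xi$-action — in fact already the right $\langle a\rangle$-action — on $(\Gamma\backslash G,m)$ is ergodic (equivalently, by the Mautner phenomenon, Lemma~\ref{Mautner}). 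Hence $m(\bar A)\in\{0,m(\Gamma\backslash G)\}$, so $A$ is $\nu$-null or $\nu$-conull. Consequently, fixing a countable basis $\{O_i\}_i$ of $\partial T$, each $\Gamma O_i$ is open, $\Gamma$-invariant and of positive $\nu$-measure (full support), hence $\nu$-conull; therefore the $\Gamma$-invariant set $D:=\bigcap_i\Gamma O_i$ of points with dense $\Gamma$-orbit is $\nu$-conull.

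Next I would recall the standard dynamics of $\Gamma$ on $\partial T$. Being a lattice in the non-compact group $G$, the group $\Gamma$ is non-elementary (it neither virtually fixes a point of $T\cup\partial T$ nor a pair of ends, since $G/\Gamma$ carries a $G$-invariant probability measure while no such measure exists on $\partial T$ or on $G$), so its limit set $\Lambda\subseteq\partial T$ is nonempty, closed and $\Gamma$-invariant, and is the unique minimal closed $\Gamma$-invariant subset: any other such set is either $\Lambda$ or lies in the domain of discontinuity $\partial T\setminus\Lambda$, where $\Gamma$ acts properly discontinuously, forcing it to be a finite $\Gamma$-orbit — impossible, as $\partial T$ has no isolated points and $\Gamma$ is non-elementary. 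I claim $\Lambda=\partial T$: otherwise $\partial T\setminus\Lambda$ is nonempty and open, hence uncountable (an open subset of the Cantor set $\partial T$) and of positive $\nu$-measure, so by the previous paragraph it contains some $\eta$ with dense $\Gamma$-orbit; but $\overline{\Gamma\eta}\subseteq\Gamma\eta\cup\Lambda$, so $\Gamma\eta\supseteq\partial T\setminus\Lambda$, contradicting that $\Gamma\eta$ is countable. Hence $\Lambda=\partial T$. Finally, given any nonempty closed $\Gamma$-invariant $F\subseteq\partial T$, Zorn's lemma furnishes a minimal closed $\Gamma$-invariant subset of $F$, which must be $\Lambda=\partial T$; thus $F=\partial T$, and the $\Gamma$-action is minimal.

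The main obstacle is the transfer step of the second paragraph — passing between ergodicity of the $\Gamma$-action on $G/G_\xi$ and Howe--Moore ergodicity of the $\langle a\rangle$-action on $\Gamma\backslash G$ — together with verifying the standard structural facts about limit sets of non-elementary groups acting on $\partial T$ and proper discontinuity on the domain of discontinuity; the remainder uses only regularity of measures and that $\partial T$ is perfect. As the paper indicates, this is the tree counterpart of Mostow's proof, which runs through the analogous group decompositions of \S\ref{subsub.decompositions} and the Howe--Moore property.
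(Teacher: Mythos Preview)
Your argument is correct, but it takes a somewhat different route from the one the paper points to. Both your proof and Mostow's share the first move: identify $\partial T$ with $G/G_\xi$, transfer $\Gamma$-invariant sets on $\partial T$ to $G_\xi$-invariant sets on $\Gamma\backslash G$, and invoke the Howe--Moore property to obtain ergodicity, hence $\nu$-a.e.\ dense $\Gamma$-orbits. The divergence is in the upgrade from ``a.e.\ dense'' to ``every orbit dense''. Mostow's argument---which is what the paper's phrase ``similar group decompositions (\S\ref{subsub.decompositions}) and the Howe--Moore property'' refers to---stays inside the group: one uses the Bruhat decomposition (the big cell $G^0_{\xi_-}G_\xi$ is open and conull in $G$) together with the $AN$-structure of $G_\xi$ to show that a proper closed $G_\xi$-invariant subset of $G/\Gamma$ cannot exist, and then applies this to each conjugate $g\Gamma g^{-1}$. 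Your proof instead imports the classical boundary dynamics of a discrete non-elementary group on $\partial T$: the limit set $\Lambda$ is the unique minimal closed $\Gamma$-set, $\Gamma$ acts properly discontinuously on $\partial T\setminus\Lambda$, and a countability argument (an uncountable open set cannot be a single $\Gamma$-orbit) forces $\Lambda=\partial T$. This is perfectly valid; the only places to tighten are cosmetic---the clause ``impossible, as $\partial T$ has no isolated points'' is not the operative reason (a perfect space can contain finite subsets); what actually excludes a finite $\Gamma$-orbit is precisely your earlier observation that $\Gamma$ does not virtually fix a boundary point, i.e.\ non-elementarity. Your approach buys a more geometric, self-contained endgame at the cost of invoking limit-set facts external to the paper; Mostow's approach buys uniformity with the algebraic machinery already assembled in \S\ref{subsub.decompositions}.
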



The previous result implies the $G_\xi$-orbit of $\Gamma$ is dense in $X$. If the  $G^0_\xi$-orbit of $\Gamma$  is compact, the $G_\xi$-orbit of $\Gamma$ is the union of $a^\mathbb{Z}$-translates of the  $G^0_\xi$-orbit of $\Gamma$ (which are themselves compact $G^0_\xi$-orbits), implying this union is dense in $X$. This clearly follows from  Proposition \ref{prop.equidist.compact.orbits} which says furthermore that the $a^\mathbb{Z}$-translates of $G^0_\xi$- orbits themselves get equidistributed with respect to $m_X$ as their volume tends to infinity.



Before proceeding with the proof of Proposition \ref{prop.equidist.compact.orbits}, let us introduce a last notation: when a compact $G^0_\xi$-orbit on $X$ and $x=g\Gamma \in X$ belonging to that orbit are understood, we denote by $m_{\text{orb}}$, the corresponding orbital measure, i.e. the probability measure that is obtained as the push-forward of the Haar probability measure of $G^0_\xi/(G^0_\xi \cap g\Gamma g^{-1})$ by the $G^0_\xi$-equivariant Borel isomorphism that maps this latter onto $G^0_\xi x \subset X$. 

\begin{proof}[Proof of Proposition \ref{prop.equidist.compact.orbits}]
Fix a compact $G^0_\xi$-orbit on $X$ and denote by $m_{\text{orb}}$ the corresponding orbital probability measure on $X$. We need to show that for every $\theta \in C_c(X)$, we have
\begin{equation}\label{eq.equidist.1}
\int \theta(a^iy) dm_{\text{orb}}(y) \underset{i \to +\infty}{\longrightarrow} \int \theta(y) dm_X(y).
\end{equation}

Since $a$ normalizes the compact group $M$ and $M \leq G^0_\xi$ preserves $m_X$, it suffices to show this for an $M$-invariant $\theta \in C_c(X)$ (this reduction is similar to the one in the proof of Theorem \ref{thm.unique.ergo}). Furthermore, it also suffices to show that for every $x \in X$ in the compact orbit and for every small enough neighborhood $O^+$ of identity in $G^0_\xi$, the orbit piece $O^+ x$ equidistributes when translated by $a^{i}$ as $i \to +\infty$, i.e.
\begin{equation}\label{eq.equidist.2}
\frac{1}{m_{G^0_\xi}(O^+)} \int_{O^+x} \theta (a^iy) dm_\text{orb}(y) \underset{i \to +\infty}{\longrightarrow} \int \theta(y) dm_X(y).
\end{equation}

Let $\theta \in C_c(X)$ be an $M$-invariant function, $x \in X$, $O^+$ be a small enough neighborhood of identity in $G^0_\xi$ such that $g \mapsto gx$ is injective on $O^+$. Fix $\varepsilon>0$. Let $O^-$ be a neighborhood of identity in $G^0_{\xi_-}$ small enough such that by $M$-invariance of the uniformly continuous function $\theta$, using $(3)$ of Lemma \ref{lem::some_little_facts}, the left-hand side of \ref{eq.equidist.2} is within $\varepsilon$ of

\begin{equation}\label{eq.equidist.3}
\frac{1}{m_{G^0_{\xi_-}}(a^i O^- a^{-i})} \frac{1}{m_{G^0_\xi}(O^+)} \int_{a^i O^- a^{-i}} \int_{O^+} \theta(va^iux) dm_{G^0_{\xi_-}}(v)dm_{G^0_\xi}(u).
\end{equation}

Changing $v$ to $a^i v a^{-i}$, by product structure of the Haar measure (Corollary \ref{lemma.Haar.VAU}), this is equal to

\begin{equation*}
\frac{1}{m_{G^0_{\xi_-}}(O^-)} \frac{1}{m_{G^0_\xi}(O^+)} \int_{O^-O^+} \theta(a^igx) dm_G(g).
\end{equation*}
Now, up to shrinking $O^-$ and $O^+$ even more so that the map $g \mapsto gx \in X$ is injective on $O^-O^+$, we can write this last equation as

\begin{equation}\label{eq.equidist.5}
\frac{1}{m_{G^0_{\xi_-}}(O^-)} \frac{1}{m_{G^0_\xi}(O^+)} \int_X \theta(a^iy) \mathbbm{1}_{O^- O^+ x}(y) dm_X(y).
\end{equation}

Since by the Howe--Moore property, $a$-action is mixing with respect to $m_X$ and by injectivity and the product structure $m_{G^0_{\xi_-}}(O^-)m_{G^0_\xi}(O^+)=m_X(O^- O^+ x)$, the quantity in $(\ref{eq.equidist.5})$ converges as $i \to +\infty$ to $\int \theta(y) dm_X(y)$, proving $(\ref{eq.equidist.1})$.
\end{proof}







\begin{bibdiv}
\begin{biblist}

\bib{Amann}{thesis}{
author={Amann, Olivier},
 title={Group of tree-automorphisms and their unitary representations},
 note={PhD thesis},
 school={ETH Z\"urich},
 year={2003},
 }

\bib{BEW}{article}{
   author={Banks, Christopher},
   author ={Elder, Murray},
   author ={Willis, George A},
   title={Simple groups of automorphisms of trees
determined by their actions on finite subtrees},
   journal={J. Group Theory},
   volume={18},
   date={2015},
   publisher={de Gruyter},
   pages={235--261},
}

\bib{Bass}{article}{
   author={Bass, Hyman},
   title={Covering theory for graphs of groups},
   journal={J. Pure Appl. Algebra},
   volume={89},
   date={1993},
   number={1-2},
   pages={3--47},
}

\bib{BL}{book}{
   author={Bass, Hyman},
   author={Lubotzky, Alexander},
   title={Tree Lattices},
   series={Progress in Mathematics, vol 176},
   publisher={Birkh\"{a}user Boston},
   date={2001},
}

\bib{BW04}{article}{
author={Baumgartner, Udo },
author={Willis, George A.},
title={Contraction groups and scales of automorphisms of totally disconnected locally compact groups},
journal={Israel Journal of Mathematics},
year={2004},
volume={142},
number={1},
publisher={Springer-Verlag},
pages={221-248},
}

\bib{bekka-lubotzky}{article}{
author={Bekka, M. Bachir},
   author={Lubotzky, Alex},
   title={Lattices with and without spectral gap},
   journal={Groups Geometry and Dynamics},
   number={5},
   date={2011},
   pages={251--264}
}

\bib{BeMa}{book}{
   author={Bekka, M. Bachir},
   author={Mayer, Matthias},
   title={Ergodic theory and topological dynamics of group actions on
   homogeneous spaces},
   series={London Mathematical Society Lecture Note Series},
   volume={269},
   publisher={Cambridge University Press, Cambridge},
   date={2000}
}

\bib{benoist.notes}{book}{
  title={R{\'e}seaux des groupes de Lie},
  author={Benoist, Yves},
  series={Notes de cours de M2},
  year={2008}
}

\bib{bowen}{article}{
 title={Unique ergodicity for horocycle foliations},
  author={Bowen, Rufus},
  author={Marcus, Brian},
  journal={Israel Journal of Mathematics},
  volume={26},
  number={1},
  pages={43--67},
  year={1977},
  publisher={Springer}
}

\bib{BPP.book}{book}{
   author={Broise-Alamichel, Anne},
   author={Parkkonen, Jouni},
   author={Paulin, Fr\'{e}d\'{e}ric},
  title={Equidistribution and counting under equilibrium states in
negatively curved spaces and graphs of groups. Applications
to non-Archimedean Diophantine approximation},
   publisher={to appear in Progress in Mathematics, Birkh\"{a}user},
   note ={arXiv:1612.06717}
}

\bib{BPP.cras}{article}{
   author={Broise-Alamichel, Anne},
   author={Parkkonen, Jouni},
   author={Paulin, Fr\'{e}d\'{e}ric},
   title={\'{E}quidistribution non archim\'{e}dienne et actions de groupes sur les
   arbres},
   journal={C. R. Math. Acad. Sci. Paris},
   volume={354},
   date={2016},
   number={10},
   pages={971--975},
}

\bib{broise-paulin.1}{article}{
   author={Broise-Alamichel, Anne},
   author={Paulin, Fr\'{e}d\'{e}ric},
   title={Dynamiques sur le rayon modulaire et fractions continues en
   caract\'{e}ristique $p$},
   journal={J. Lond. Math. Soc. (2)},
   volume={76},
   date={2007},
   number={2},
   pages={399--418},
}

 \bib{BM00b}{article}{
   author={Burger, Marc},
   author={Mozes, Shahar},
   title={Groups acting on trees: from local to global structure},
   journal={Inst. Hautes \'Etudes Sci. Publ. Math.},
   number={92},
   date={2000},
   pages={113--150},
} 

\bib{BM00a}{article}{
   author={Burger, Marc},
   author={Mozes, Shahar},
   title={Lattices in product of trees},
   journal={Inst. Hautes \'{E}tudes Sci. Publ. Math.},
   number={92},
   date={2000},
   pages={151--194},
}


\bib{CaMe13}{article}{
author={Caprace, Pierre-Emmanuel},
author={De Medts, Tom},
title ={Trees, contraction groups, and Moufang sets},
journal = {Duke Mathematical Journal},
number ={13},
pages ={2413--2449},
publisher = {Duke University Press},
volume = {162},
year = {2013},
}


\bib{caprace-reid-willis}{article}{
 title={Locally normal subgroups of totally disconnected groups. Part II: compactly generated simple groups}, 
 volume={5},
 journal={Forum of Mathematics, Sigma}, 
 publisher={Cambridge University Press}, 
 author={Caprace, P-E.},
 author={Reid, C. D. },
 author={Willis, G. A.}, 
 year={2017}, 
 }

\bib{Chou}{article}{
   author={Choucroun, Francis M.},
   title={Analyse harmonique des groupes d'automorphismes d'arbres de
   Bruhat-Tits},
   journal={M\'{e}m. Soc. Math. France (N.S.)},
   number={58},
   date={1994},
   pages={170},
}

\bib{Cio}{article}{
   author={Ciobotaru, C.},
   title={A unified proof of the Howe--Moore property},
   journal={Journal of Lie Theory},
   volume={25},
   date={2015},
   pages={65--89},
   }
   
\bib{CFS.rec}{article}{
  author={Ciobotaru, C.},
  author={Finkelshtein, V.},
  author={Sert, C.},
  title={Quantitative recurrence and equidistribution for horospherical subgroups of groups action on trees},
  journal={In progress},

}


\bib{dani.horospherical}{article}{
   author={Dani, S. G.},
   title={Invariant measures and minimal sets of horospherical flows},
   journal={Invent. Math.},
   volume={64},
   date={1981},
   number={2},
   pages={357--385},
}


\bib{ellis-perrizo}{article}{
   author={Ellis, Robert},
   author={Perrizo, William},
   title={Unique ergodicity of flows on homogeneous spaces},
   journal={Israel J. Math.},
   volume={29},
   date={1978},
   number={2-3},
   pages={276--284},
}

\bib{eskin-mcmullen}{article}{
  author={Eskin, Alex}, 
  author={McMullen, Curt},
  title={Mixing, counting, and equidistribution in Lie groups}, journal={Duke Mathematical Journal},
  
  date={1993},
  volume={71},
  number={1},
  pages={181--209}
}

\bib{federer}{book}{
   author={Federer, Herbert},
   title={Geometric measure theory},
   series={Die Grundlehren der mathematischen Wissenschaften, Band 153},
   publisher={Springer-Verlag New York Inc., New York},
   date={1969},
   pages={xiv+676},
}


\bib{furstenberg}{article}{
   author={Furstenberg, Harry},
   title={The unique ergodicity of the horocycle flow},
   conference={
      title={Recent advances in topological dynamics},
   },
   book={
      publisher={Springer, Berlin},
   },
   date={1973},
   pages={95--115},
}

\bib{ghys}{article}{
    author = {Ghys, \'{E}tienne},
     title = {Dynamique des flots unipotents sur les espaces homog\`enes},
      note = {S\'{e}minaire Bourbaki, Vol. 1991/92},
   journal = {Ast\'{e}risque},
    number = {206},
      volume={3},
     pages = {93--136},
}

\bib{Haagerup}{article} { 
  author = {U. Haagerup and A. Przybyszewska},
  title  = {Proper metrics on locally compact groups, and proper affine isometric actions on Banach spaces},
  year   = {2006},
  note   = {arXiv:0606.7964},
}

\bib{halmos}{book}{
  title={Measure theory},
  author={Halmos, Paul R},
  volume={18},
  year={2013},
  publisher={Springer}
}

\bib{hedlund}{article}{
   author={Hedlund, Gustav A.},
   title={Fuchsian groups and transitive horocycles},
   journal={Duke Math. J.},
   volume={2},
   date={1936},
   number={3},
   pages={530--542},
}

\bib{hersonsky-paulin}{article}{
title={Diophantine approximation for negatively curved manifolds},
  author={Hersonsky, Sa'ar},
  author={Paulin, Fr{\'e}d{\'e}ric},
  journal={Mathematische Zeitschrift},
  volume={241},
  number={1},
  pages={181--226},
  year={2002},
  publisher={Springer}
}
 

\bib{khukro}{article}{
  title={Box spaces, group extensions and coarse embeddings into Hilbert space},
  author={Khukhro, A},
  journal={Journal of Functional Analysis},
  volume={263},
  number={1},
  pages={115--128},
  year={2012},
  publisher={Elsevier}
}

\bib{KLNN}{article}{
  title={Farey maps, Diophantine approximation and Bruhat--Tits tree},
  author={Kim, Dong Han},
  author={Lim, Seonhee},
  author={Nakada, Hitoshi},
  author={Natsui, Rie},
  journal={Finite Fields and Their Applications},
  volume={30},
  pages={14--32},
  year={2014},
  publisher={Elsevier}
}


	 
\bib{Knapp}{book}{
   author={Knapp, Anthony W.},
   title={Lie groups beyond an introduction},
   series={Progress in Mathematics},
   volume={140},
   edition={2},
   publisher={Birkh\"{a}user Boston, Inc., Boston, MA},
   date={2002},
   pages={xviii+812},
}


 \bib{Lind}{article}{
   author={Lindenstrauss, Elon},
   title={Pointwise theorems for amenable groups},
   journal={Invent. Math.},
   volume={146},
   date={2001},
   number={2},
   pages={259--295},
}

\bib{lubotzky.gafa}{article}{
   author={Lubotzky, A.},
   title={Lattices in rank one Lie groups over local fields},
   journal={Geometric \& Functional Analysis GAFA},
   number={1(4)},
   pages={405--431},
}

\bib{lubotzky-mozes}{book}{
   author={Lubotzky, A.},
   author={Mozes, S.},
   title={Asymptotic properties of unitary representations of tree automorphisms}, 
   series={Harmonic analysis and discrete potential theory},
   date={1992},  
   pages={289--298},
   publisher={Springer, Boston, MA.},

}

\bib{margulis-tomanov}{article}{
author={Margulis, G. A.},
author={Tomanov, G.},
title={Invariant measures for actions of unipotent groups over local fields on homogeneous spaces} ,
journal={Invent. Math.},
volume={116},
number={1-3},
date={1994},
pages={347–-392}
}

\bib{margulis.thesis}{book}{
   author={Margulis, Grigoriy A.},
   title={On some aspects of the theory of Anosov systems},
   series={Springer Monographs in Mathematics},
   publisher={Springer-Verlag, Berlin},
   date={2004},
   pages={vi+139},
}

\bib{mohammadi}{article}{
   author={Mohammadi, Amir},
   title={Measures invariant under horospherical subgroups in positive
   characteristic},
   journal={J. Mod. Dyn.},
   volume={5},
   date={2011},
   number={2},
   pages={237--254},
}

\bib{mostow}{book}{
  title={Strong Rigidity of Locally Symmetric Spaces.(AM-78)},
  author={Mostow, G Daniel},
  volume={78},
  year={2016},
  publisher={Princeton University Press}
}

\bib{nagao}{article}{
  title={On $\GL (2,\mathrm{K} [x])$},
  author={Nagao, Hirosi},
  journal={Journal of the Institute of Polytechnics, Osaka City University. Series A: Mathematics},
  volume={10},
  number={2},
  pages={117--121},
  year={1959},
  publisher={Osaka University and Osaka City University, Departments of Mathematics}
}

\bib{paulin.geo.fin}{article}{
    title={Groupes g\'{e}om\'{e}triquement finis d'automorphismes d'arbres et approximation diophantienne dans les arbres},
	author={Paulin, Fr\'{e}d\'{e}ric},
	journal={Manuscripta Mathematica},
	volume={113},
	number={1},
	year={2004},
	pages={1--23}
	}

\bib{paulin}{article}{
 title={Groupe modulaire, fractions continues et approximation diophantienne en caract{\'e}ristique p},
  author={Paulin, Fr{\'e}d{\'e}ric},
  journal={Geometriae Dedicata},
  volume={95},
  number={1},
  pages={65--85},
  year={2002},
  publisher={Springer}
}

\bib{paulin-shapira}{unpublished}{
   author={Paulin, Fr\'{e}d\'{e}ric},
   author={Shapira, Uri},
   title={On continued fraction expansions of quadratic irrationals in positive characteristic},
   note={preprint 2018, arXiv:1801.10184},
}

\bib{quint.notes}{unpublished}{
   author={Quint, Jean-Fran\c{c}ois},
   title={Examples of unique ergodicity of algebraic flows},
   note={Lecture Notes, Tsinghua University, Beijing, November 2007},
}

\bib{raghunathan}{book}{
   author={Raghunathan, M. S.},
   title={Discrete subgroups of Lie groups},
   note={Ergebnisse der Mathematik und ihrer Grenzgebiete, Band 68},
   publisher={Springer-Verlag, New York-Heidelberg},
   date={1972},
   pages={ix+227},
}

\bib{ratner.padic}{article}{
   author={Ratner, Marina},
   title={Raghunathan's conjectures for Cartesian products of real and
   $p$-adic Lie groups},
   journal={Duke Math. J.},
   volume={77},
   date={1995},
   number={2},
   pages={275--382},
}

\bib{ratner.unipotent}{article}{
   author={Ratner, Marina},
   title={On measure rigidity of unipotent subgroups of semisimple groups},
   journal={Acta Math.},
   volume={165},
   date={1990},
   number={3-4},
   pages={229--309},
}

\bib{ratner.raghunathan}{article}{
  title={On Raghunathan's measure conjecture},
  author={Ratner, Marina},
  journal={Annals of Mathematics},
  pages={545--607},
  year={1991},
  publisher={JSTOR}
}	

\bib{ratner.joining}{article}{
 title={Horocycle flows, joinings and rigidity of products},
  author={Ratner, Marina},
  journal={Annals of Mathematics},
  pages={277--313},
  year={1983},
  publisher={JSTOR}
}

\bib{sarnak}{article}{
  title={Asymptotic behavior of periodic orbits of the horocycle flow and Eisenstein series},
  author={Sarnak, Peter},
  journal={Communications on Pure and Applied Mathematics},
  volume={34},
  number={6},
  pages={719--739},
  year={1981},
  publisher={Wiley Online Library}
}

\bib{bass-serre}{book}{
  title={Arbres, amalgames, $\SL$2: cours au Coll{\`e}ge de France},
  author={Serre, Jean-Pierre},
  year={1977},
  publisher={Soci{\'e}t{\'e} math{\'e}matique de France}
}

\bib{Ti70}{article}{
   author={Tits, Jacques},
   title={Sur le groupe des automorphismes d'un arbre},
   conference={
      title={Essays on topology and related topics (M\'emoires d\'edi\'es
      \`a Georges de Rham)},
   },
   book={
      publisher={Springer},
      place={New York},
   },
   date={1970},
   pages={188--211},
}

\end{biblist}
\end{bibdiv}
\end{document}